\documentclass[11pt,reqno]{amsart}

\setlength{\textheight}{23cm}
\setlength{\textwidth}{16.5cm}
\setlength{\topmargin}{-0.8cm}
\setlength{\parskip}{0.3\baselineskip}
\hoffset=-1.9cm

\usepackage{amsfonts} 
\usepackage{amsmath,amscd}
\usepackage{amssymb}
\usepackage{tikz-cd}
\usepackage{amsthm}
\usepackage{cases}

\usepackage{galois}
\usepackage{chemarrow}
\usepackage[all]{xy}
\usepackage{graphicx}
\usepackage[colorlinks,
            linkcolor=magenta,
            anchorcolor=black,
            citecolor=red
            ]{hyperref}	
\usepackage{mathrsfs}
\usepackage{extarrows}
\usepackage{texnames}
\usepackage{textcase}

\makeatletter
\def\section{%
  \@startsection{section}{1}
    {\z@}
    {2.0ex plus 0.8ex minus .1ex}
    {1.0ex plus .2ex}
    {\large\bfseries\boldmath\centering\MakeTextUppercase}%
}
\makeatother

\def\grad{\nabla}

\newcommand{\si}{\sigma}

\newcommand{\ep}{\epsilon}

\newcommand{\al}{\alpha}

\newcommand{\vol}{V}

\newcommand{\dist}{{\rm dist}}
\newcommand{\lap}{\Delta}

\newcommand{\be}{\beta}
\newcommand{\boundary}{\partial}
\newcommand{\ti}{\tilde}

\newcommand{\partt}{\frac{\partial}{\partial t}}
\newcommand{\upto}{\nearrow}

\def\Sc{{\rm  R}}
\def\Rm{{\rm  Rm}}
\def\si{\sigma}

\def\Ric{{\rm Ric}}
\def\Rc{{\rm Ric}}
\def\Ricci{{\rm Ric}}

\def\boundary{ \partial }
\newcommand{\N}{ \mathbb N}
\newcommand{\R}{ \mathbb R}

\newtheorem{thm}{Theorem}[section]
\newtheorem{lem}[thm]{Lemma}

\newtheorem{rem}[thm]{Remark}

\newcommand*{\QEDB}{\hfill\ensuremath{\square}}
\numberwithin{equation}{section}

\begin{document}
\title[\sc\small Estimates for  Ricci flows  with $L^p$ bounded scalar curvature ]{Integral curvature estimates for solutions to Ricci flow with $L^p$ bounded scalar curvature}
\author[\sc\small J\MakeLowercase{iawei} L\MakeLowercase{iu and} M\MakeLowercase{iles} S\MakeLowercase{imon}]{\sc\large J\MakeLowercase{iawei} L\MakeLowercase{iu and} M\MakeLowercase{iles} S\MakeLowercase{imon}}
\address{Jiawei Liu\\  School of Mathematics and Statistics \\  Nanjing University of Science \& Technology \\  Xiaolingwei Street 200\\ Nanjing 210094 \\ China\\} \email{jiawei.liu@njust.edu.cn}
\address{Miles Simon\\ Institut f\"ur Analysis und Numerik\\ Otto-von-Guericke-Universit\"at Magdeburg\\ universit\"atsplatz 2\\ Magdeburg 39106\\ Germany\\} \email{miles.simon@ovgu.de}
\subjclass[2010]{53C44}
\keywords{Ricci flow,\ K\"ahler-Ricci flow}
\thanks{Both  authors were  supported by grants in the Special Priority Program SPP 2026 ``Geometry at Infinity" of  the German Research Foundation (DFG). }

\maketitle
\vskip -3.99ex

\centerline{\noindent\mbox{\rule{3.99cm}{0.5pt}}}

\vskip 5.01ex

\ \ \ \ {\bf Abstract: }
In this   paper we  prove  {\bf localised weighted}  curvature   integral estimates for solutions to the Ricci flow 
in the setting of a  smooth four dimensional Ricci flow or a closed $n$-dimensional Kähler Ricci flow. 
These integral   estimates improve and extend  the integral curvature estimates shown by the second author  in an earlier paper. If  
the scalar curvature is uniformly bounded in the spatial $L^p$ sense for some $p>2,$ then the estimates imply a uniform bound on the spatial $L^2$ norm of the Riemannian curvature  tensor. Stronger integral estimates are shown to hold if one further assumes a weak non-inflating condition, or we restrict to  closed manifolds.

\medskip


\section{Introduction}\label{sec:intro}

In this   paper we  prove  {\bf localised weighted}  curvature   integral estimates for solutions to the Ricci flow,  
which generalise and improve  those proved in \cite{MSIM2}. The Ricci flow, $\partt g(t) = -2\Rc_{g(t)}$  was first  introduced and studied by 
R. Hamilton in \cite{Ham}.\\

Here, and in the sequel  paper \cite {LiuSim2},  we are interested  in  the setting: \\

 \noindent ({\bf A}): 
$(M^n,g(t))_{t\in [0,T)}, 0<T < \infty$  is a  smooth 
$n$-dimensional  solution   to Ricci flow   with  $\inf\limits_{M \times [0,T)}
\Sc_{g(t)}  > -\infty$ for all $t\in  [0,T)$ and $N$  is a smooth, connected    $n$-dimensional submanifold   with smooth boundary $\boundary N$, $N$  compactly contained in $M$,   such that 
$  \Omega_0 :=   B_{g(0)}(\boundary N,\si)  $ has compact closure  in $M$     and
$ \sup\limits_{    x \in   { \Omega_0 }, t  \in [0,T) } |\Rm_{g(t)}|_{g(t)} < \infty .$    
\\

{ Using Perleman's Pseudolocality Theorem (see \cite{Top} Lemma A.4),  compactness of $\overline{\Omega_0}$ and the fact that the solution is smooth, we can scale the solution once by a large constant $C$, that is we consider $(M, Cg(\frac{t}{C}))_{t\in [0,T C)}$  in place of $(M,g(t))_{t\in [0,T)}$ to  arrive at the basic setting, which we often assume in this paper: }
\\

\vspace{0.5cm}
\noindent ({\bf B}):   $(M^n,g(t))_{t\in [0,T)}, 2\leq T < \infty$  is a smooth 
$n$-dimensional  solution   to Ricci flow   with  $\Sc_{g(t)} \geq -1$ for all $t\in  [0,T)$ and $N$  is a connected, smooth   $n$-dimensional submanifold   with smooth boundary $\boundary N$, $N$  compactly contained in $M$,   such that 
$  \Omega :=  \cup_{s=0}^{T} B_{g(s)}(\boundary N,10)  $ has compact closure  in $M$  and  $\sup\limits_{    x \in  \overline{ \Omega }, t  \in [0,T) } |\grad^k \Rm_{g(t)}|_{g(t)} \leq 1$   for all $k\in\{1,\ldots,4\},$\\     $ \sup\limits_{    x \in  \overline{ \Omega }, t  \in [0,T) } |\grad^k \Rm_{g(t)}|_{g(t)}\leq C_k< \infty ,$ for all $k\geq 5,$  $k\in \N,$
and \\$\sup\limits_{x \in \Omega \cup N, t \in [0,1] }  |\Rm_{g(t)}|_{g(t)}\leq 1,$ 
\vspace{1cm}
 
 \vspace{0.5cm}
 { 
In the case that $(M^n,g(t))_{t\in [0,T)}$ is a   closed, smooth Kähler solution to the  Ricci flow,
we may  change  the condition ${\bf (A)}$ to\\

\noindent ({\bf  C}): 
$(M^n,g(t))_{t\in [0,T)}, 0<T < \infty$  is a  smooth, closed  
$n$-dimensional  solution   to K\"ahler-Ricci flow   with  $\inf\limits_{M \times [0,T)}
\Sc_{g(t)} > -\infty$ for all $t\in  [0,T)$ and $N$  is a smooth, connected $n$-dimensional submanifold   with smooth boundary $\boundary N$, $N$  compactly contained in $M$,   such that 
$  \Omega_0 :=   B_{g(0)}(\boundary N,\si)  $ has compact closure  in $M$     and there exists a constant $c>0$ such that $\frac{1}{c} g_0 \leq g(t) \leq c g_0 $ for all $t\in [0,T).$ \\ 
 It is still then possible, using compactness of
 $\overline{\Omega_0},$ smoothness of the solution and  Corollary 1.2 of \cite{SHWE} (see also \cite{JWLXZ})
 to scale the solution once, by a large constant, so that the new solution satisfies ({\bf B}). }  
 
We are interested in showing integral formulae   on the  local region $N$   for  $t\in [0,T)$  
 where a neighbourhood of   $\boundary N$ is regular :  $(N,g(t))_{t\in [0,T)}$ is also a solution to Ricci flow, which possibly becomes singular at time $T$, but it is 
regular at and near its boundary.

In the case that  we restrict to real four dimensional Ricci flow solutions or K\"ahler-Ricci flow solutions of any dimension,   we obtain the following:

\begin{thm}\label{firstmainwieIntro}
For $n\in\N,$ $1\leq {\rm V} < T < \infty,$ 
let   $(M^{n},g(t))_{t\in [0,T)}$  be a smooth,  real  solution
to Ricci flow, $\partt g(t) = -2\Rc_{g(t)}.$  
We assume that $n=4$ or that $(M^n,g_0)$ is Kähler ( complex  dimension $\frac n 2,$ real dimension $n$ ) and closed.
Assume further that  $\alpha\in(0,\frac{1}{12})$   and  $N  \subseteq M$   
  is  a  smooth  connected  real $n$-dimensional   submanifold with boundary, and  $N$ and $(M^{n},g(t))_{t\in [0,T)}$   are  as in ({\bf B}). 
Then there exists a constant $\hat c_0 = \hat c_0(N,g_0,T,\Omega,   g|_{\Omega} ) < \infty$  such that for 
$b \geq \hat b(\al,T) =  \frac{2000 T}{\al}$ we have    for all $r<s$ with $ r,s \in [0,{\rm V}]$ 
 that 
\begin{eqnarray}\label{RIntEstIntro} 
 &&  \int_N \frac{|\Rc_{g(s)}|_{g(s)}^2}{2{\rm V}  + \Sc_{g(s)}({\rm V}-s)^{1-\al}} d\vol_{g(s)} \cr
 &&  \ \ \  +    \int_r^s  \int_N   ({\rm V}-t)^{1-\alpha}\frac{|\Rc_{g(t)}|_{g(t)}^4}{ ( 2{\rm V}  + \Sc_{g(t)}({\rm V}-t)^{1-\al}   )^2}   d\vol_{g(t)}    dt \cr
   && \leq  e^{b({\rm V}-r)^{\al}} \frac{1}{\al}\hat c_0 (s-r)^{\al}  + 
  e^{b({\rm V}-r)^{\al}} \int_N \frac{|\Rc_{g(r)}|_{g(r)}^2 }{ 2{\rm V}  + \Sc_{g(r)} ({\rm V}-r)^{1-\al} } d\vol_{g(r)}   \cr
    && \ \ \   +  e^{b({\rm V}-r)^{\al}}  \int_r^s  \int_N  \frac{1 }{({\rm V}-t)^{1-\alpha}}  \Sc_{g(t)}^{2+12\alpha}d\vol_{g(t)}    dt.
\end{eqnarray}

\end{thm}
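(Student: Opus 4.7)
The approach is to study the evolving quantity
\[
 f(t) := \int_N \frac{|\Rc_{g(t)}|_{g(t)}^2}{2{\rm V} + \Sc_{g(t)}({\rm V}-t)^{1-\al}} \, d\vol_{g(t)},
\]
differentiate it in time, and derive a differential inequality that, after a Gronwall argument, produces the stated bound. Writing $w(t)=2{\rm V}+\Sc_{g(t)}({\rm V}-t)^{1-\al}$, I combine Hamilton's identities $\partt|\Rc|^2=\lap|\Rc|^2-2|\grad\Rc|^2 + \Rm*\Rc*\Rc$, $\partt\Sc=\lap\Sc+2|\Rc|^2$, and $\partt\,d\vol=-\Sc\,d\vol$ with the elementary computation
\[
\partt w \;=\; ({\rm V}-t)^{1-\al}\lap\Sc + 2({\rm V}-t)^{1-\al}|\Rc|^2 - (1-\al)({\rm V}-t)^{-\al}\Sc .
\]
The middle term above, fed through the quotient rule, yields the favourable contribution $-2({\rm V}-t)^{1-\al}|\Rc|^4/w^2$, which is precisely the ``good'' second term on the LHS of \eqref{RIntEstIntro}.

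Next, the Laplacian contributions $\lap|\Rc|^2$ and $\lap\Sc$ are integrated by parts against $1/w$ over $N$. Because the hypothesis ({\bf B}) gives uniform bounds on derivatives of the curvature on the buffer region $\Omega\supset\partial N$, every boundary term produced on $\partial N$ is pointwise bounded by a quantity depending only on the initial data, and is absorbed into the constant $\hat c_0=\hat c_0(N,g_0,T,\Omega,g|_\Omega)$. The gradient cross term produced by differentiating $1/w$ is handled by a Cauchy--Schwarz absorption against the $-2|\grad\Rc|^2/w$ that comes for free.

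The essential obstruction is the reaction term $\int_N (\Rm*\Rc*\Rc)/w\,d\vol$. Here the assumption ``$n=4$ or Kähler'' enters decisively: in these two settings one has pointwise algebraic identities (as already exploited in \cite{MSIM2}) that allow $|\Rm*\Rc*\Rc|$ to be dominated by a combination of $|\Rc|^3$, $|\Sc||\Rc|^2$, and terms already present on the LHS, so that \emph{no} genuine $|\Rm|^3$ quantity appears. Once this reduction is made, I apply Young's inequality with exponents tuned so that the cubic terms $|\Rc|^3/w$ and $|\Sc||\Rc|^2/w$ split into (i) a small multiple of the good quartic $({\rm V}-t)^{1-\al}|\Rc|^4/w^2$ (absorbed on the left) and (ii) a remainder of the form $C\,\Sc^{2+12\al}/({\rm V}-t)^{1-\al}$. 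The specific exponent $2+12\al$ and the requirement $\al<1/12$ arise from balancing the weight $w\gtrsim{\rm V}$, the factor $({\rm V}-t)^{1-\al}$, and the power of $\Sc$ that emerges from Young's inequality.

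The resulting differential inequality is of the schematic form
\[
 \frac{d}{dt}f(t) + \text{(good term)} \;\leq\; \frac{\hat c_0}{({\rm V}-t)^{1-\al}} + \frac{b}{({\rm V}-t)^{1-\al}}\,f(t) + \int_N \frac{\Sc^{2+12\al}}{({\rm V}-t)^{1-\al}}\,d\vol_{g(t)},
\]
valid for $b\ge\hat b(\al,T)=2000T/\al$. Multiplying by the integrating factor $\exp\bigl(-\tfrac{b}{\al}[({\rm V}-r)^\al-({\rm V}-t)^\al]\bigr)$ and integrating from $r$ to $s$ yields \eqref{RIntEstIntro}; the exponential prefactor $e^{b({\rm V}-r)^\al}$ is exactly this integrating factor, while the coefficient $(s-r)^\al/\al$ on the forcing-constant term comes from $\int_r^s({\rm V}-t)^{\al-1}dt=\tfrac{1}{\al}(({\rm V}-r)^\al-({\rm V}-s)^\al)\le\tfrac{(s-r)^\al}{\al}$. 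The main obstacle is Step~3: obtaining a bound for $\Rm*\Rc*\Rc$ that avoids any uncontrolled $|\Rm|$-power; after that, the book\-keeping to produce the sharp exponent $2+12\al$ is delicate but purely algebraic.
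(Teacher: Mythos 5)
Your overall architecture is right: evolve $\int_N |\Rc|^2 / w$ with $w = 2{\rm V}+\Sc\,({\rm V}-t)^{1-\al}$, integrate the Laplacian terms by parts, absorb the gradient cross term against $-2|\grad\Rc|^2/w$, introduce the integrating factor $e^{b({\rm V}-t)^\al}$, and integrate $\int_r^s ({\rm V}-t)^{\al-1}\,dt \le (s-r)^\al/\al$. That matches the paper's first step. But there is a genuine gap at precisely the spot you flag as the ``essential obstruction'': you claim that in dimension four or in the K\"ahler setting there is a \emph{pointwise} algebraic identity allowing $\Rm*\Rc*\Rc$ to be dominated by $|\Rc|^3$ and $|\Sc||\Rc|^2$ with no $|\Rm|$ surviving. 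No such pointwise bound exists. In $n=4$ the Weyl tensor is a free component, and for K\"ahler manifolds of higher complex dimension the full curvature likewise cannot be pointwise controlled by Ricci; so $\Rm(\Rc,\Rc)/w$ only yields, after Young, a term $\sim |\Rm|^2/({\rm V}-t)^{1-\al}$ that cannot be discarded pointwise.

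What the paper does instead is an \emph{integral} substitution, and this is where the four-dimensional/K\"ahler hypothesis really enters. It first proves the intermediate inequality \eqref{FirstRIntEst} in \emph{any} dimension, leaving $\int_r^s \int_N 8|\Rm|^2/({\rm V}-t)^{1-\al}$ on the right-hand side. Then: in $n=4$ it invokes the Chern--Gauss--Bonnet theorem \emph{with boundary} on $N$, using ({\bf B}) to control the boundary and Euler-characteristic contributions, to get $\int_N |\Rm|^2 \le \int_N 4|\Rc|^2 - \int_N \Sc^2 + \hat c$; in the closed K\"ahler case it uses the Apte-type formulae (Theorems \ref{KaehlerIntEst} and \ref{KaehlerIntEstCor} of the paper), which again control the resulting constant $C(t)$ through ({\bf B}), to get $\int_N |\Rm|^2 \le \int_N \Sc^2 + \hat c$. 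Only then is the $\int_N 4|\Rc|^2$ piece re-absorbed — expanded as $4|\Rc|^2 w / w$ and split into a multiple of $\int_N |\Rc|^2/w$ (a Gronwall term contributing to the requirement $b\al\ge 2000T$) and a multiple of $\int_N ({\rm V}-t)^{1-\al}|\Rc|^4/w^2$ absorbed by the good term. Your proposal is missing this integral Gauss--Bonnet/Apte step, and with it the actual reason the theorem requires $n=4$ or K\"ahler-and-closed; without it the $|\Rm|^2$ term cannot be removed.
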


The proof of this basic estimate requires that we estimate the spatial $L^2$ norm of the full Riemannian curvature tensor by terms  only involving  the spatial $L^2$ norm of the Ricci and scalar curvature, quantities involving only the time zero metric on $N,$ and bounded quantities on the boundary of $N$. In four dimensions this is achieved with the help of the generalised Gauss-Bonnet Theorem, as it was in the paper \cite{MSIM1}, although here we require the version of the generalised Gauss-Bonnet Theorem with boundary. 
For the K\"ahler case, we use   estimates for the $L^2$ norm of the full curvature which are shown in this paper and are valid in any dimension. These estimates are proven with the help of formulae from Apte  \cite{APTE}  :

 \begin{thm}\label{KaehlerIntEstCorIntro}
Let $(M,g(t))_{t\in[0,T)}$ with $T<\infty$ be a smooth closed solution to the Ricci flow  and assume 
$(M,g_0)$ is a closed K\"ahler manifold  with complex dimension $m$ and   $N  \subseteq M$ is   a  smooth   connected   $m$-dimensional complex   submanifold with boundary $\boundary N,$ where $N  \subseteq M,$ $\Omega$   and $(M^{n},g(t))_{t\in [0,T)}$    are  as in ({\bf B}), with $n=2m$.
 Then  there exists a $C=C(m,N,g(0), \Omega, g|_{\Omega},   T   ) < \infty$  such that
\begin{eqnarray}
 \int_N |\Rm_{g(t)}|_{g(t)}^2 d\vol_{g(t)} \leq  \int_N \Sc_{g(t)}^2 d\vol_{g(t)} +C
 \end{eqnarray}
 for all $t\in [0,T)$. 
\end{thm}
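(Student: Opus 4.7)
\medskip
\noindent\textbf{Proof proposal.} The plan is to apply pointwise identities of Apte \cite{APTE} for a Kähler metric $g$ of complex dimension $m$, which express the Chern--Weil $(m,m)$-forms $c_2(g)\wedge\omega_g^{m-2}$ and $c_1(g)^2\wedge\omega_g^{m-2}$ as universal quadratic polynomials in the curvature tensor, times the Riemannian volume form. A suitable linear combination, chosen so that the residual $|\Ric|^2$-terms either cancel or carry an absorbable sign, yields a pointwise identity of the shape
\begin{align*}
 |\Rm_g|^2 \, d\vol_g \;=\; \Sc_g^2 \, d\vol_g \;+\; \alpha_1\, c_2(g)\wedge\frac{\omega_g^{m-2}}{(m-2)!} \;+\; \alpha_2\, c_1(g)^2\wedge\frac{\omega_g^{m-2}}{(m-2)!}
\end{align*}
with universal constants $\alpha_i = \alpha_i(m)$. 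Integrating over $N$ reduces the problem to bounding $\int_N c_k(g(t)) \wedge \omega_{g(t)}^{m-2}$, $k=1,2$, by a constant $C(m,N,g_0,\Omega,g|_{\Omega},T)$.

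To control these Chern--Weil integrals on $N$, I would use the standard Chern--Simons transgression: for any two Kähler metrics $g_0,g_1$ on $M$ there is an explicit form $T_k(g_0,g_1)$, built locally and universally from $g_0,g_1$ and their derivatives, satisfying $c_k(g_1) - c_k(g_0) = d\,T_k(g_0,g_1)$. Combined with the Kähler--Ricci flow equation, which after integration in $t$ gives
\begin{align*}
 \omega_{g(t)} \;=\; \omega_{g_0} \;-\; 2\pi t\, c_1(g_0) \;-\; 2\pi \int_0^t d\,T_1(g_0,g(s))\,ds ,
\end{align*}
this lets both $c_k(g(t))$ and each factor of $\omega_{g(t)}$ be decomposed into a $g_0$-part plus an exact form whose primitive is locally determined by $g(s)$ for $s\leq t$.

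Substituting these decompositions into $\int_N c_k(g(t))\wedge \omega_{g(t)}^{m-2}$ and applying Stokes' theorem to each exact contribution, the integral splits into two types. The first consists of integrals over $N$ of closed $(m,m)$-forms built only from $\omega_{g_0}$ and $c_1(g_0),c_2(g_0)$, multiplied by polynomials in $t$ of degree at most $m-1$; these are bounded in terms of $(N, g_0, T)$. The second consists of boundary integrals over $\partial N \subseteq \overline{\Omega}$ of transgression forms, whose integrands are universal polynomial expressions in $g(s)|_{\overline{\Omega}}$ and its derivatives for $s\in[0,t]$; these are bounded, via hypothesis (B), by a constant depending on $(\Omega, g|_{\Omega}, T)$. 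Combining both with the pointwise Apte identity yields the theorem.

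The main obstacle will be the bookkeeping in the last step: identifying a correct Apte linear combination in which $|\Ric|^2$ is eliminated or absorbed, expanding $\omega_{g(t)}^{m-2}$ via the binomial theorem with non-commuting exact corrections, and verifying that every resulting boundary integral is genuinely controlled by data on $\overline{\Omega}$ alone, with no surviving term requiring global curvature information on the uncontrolled region $M\setminus\overline{\Omega}$.
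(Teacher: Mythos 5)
Your proposal is essentially the paper's approach. The paper combines Apte's pointwise identities (Lemma \ref{09001}) to express $(|\Rm_g|^2-\Sc_g^2)\,dV_g$ in terms of $(\rho_g^2-c_2(\omega))\wedge\omega^{n-2}$, then uses the explicit K\"ahler--Ricci flow potentials $\rho_{g(t)}=\rho_{g_0}+\sqrt{-1}\partial\bar\partial f(t)$ and $\omega_{g(t)}=\omega_0-t\rho_0+\sqrt{-1}\partial\bar\partial\varphi(t)$ for the $c_1^2$- and $\omega$-parts, and the $dd^c$-lemma together with an explicit Christoffel-symbol transgression form $\beta(t)$ (Lemma \ref{09030}) for the $c_2$-part; expanding $\omega^{n-2}=(\omega_0+\Omega_t)^{n-2}$ binomially and applying Stokes then yields boundary integrals over $\partial N\subseteq\overline\Omega$ controlled by hypothesis ({\bf B}). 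Your ``Chern--Simons transgression $T_k(g_0,g_1)$'' is precisely the role played by $\beta(t)$ and $d^cf(t)$, $d^c\varphi(t)$ in the paper, so the only thing you leave implicit and the paper does in detail is verifying that these transgression primitives are genuine, coordinate-free tensors built locally from the two metrics so that Stokes can be applied and the resulting boundary terms estimated by data on $\overline\Omega$ alone (this is the substance of Lemma \ref{09030} and Theorem \ref{KaehlerIntEstCor}).
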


If we further assume that the scalar curvature is bounded in the spatial  $L^{2+v}$ sense, for some $v\in (0,1)$  
then we obtain further integral estimates:

\begin{thm}\label{secondmainweiIntro}
For $n\in\N,$ $1\leq {\rm V} < T < \infty,$  $\alpha\in(0,\frac{1}{12})$  
let   $(M^{n},g(t))_{t\in [0,T)}$  be a smooth,  real  solution
to Ricci flow and  assume 
that $n=4$ or that $(M^n,g_0)$ is Kähler ( complex  dimension $\frac n 2,$ real dimension $n$ ) and closed.
Assume further that    $N  \subseteq M$   
  is  a  smooth  connected  real $n$-dimensional   submanifold with boundary, and  $N$ and $(M^{n},g(t))_{t\in [0,T)}$   are  as in ({\bf B}) and that 
$$\int_N |\Sc|_{g(t)}^{2+12\alpha} d\vol_{g(t)}  \leq C_0< \infty$$ for all $t\in [0,T)$ for some constant $C_0\in \R^{+}.$ 
Then, for all $l  \in [0,T)$ we have  
\begin{eqnarray} 
&& i)   \int_{N} |\Rm_{g(l)}|_{g(l)}^2 d\vol_{g(l)}   
  \leq \hat c_1\label{RiemestgoodIntro}, \\\nonumber
   && ii)   \int_{ {\rm V}-2s}^{{\rm V}-s} \int_{N \cap B_{g(t)}(p,r) } |\Rc_{g(t)}|_{g(t)}^4 d\vol_{g(t)} dt\\
   &&\ \ \   \leq \hat c_1 \frac{1}{s^{1-\alpha} } + 
  \hat c_1 \sup \{     |\Rc_{g(t)}|_{g(t)}^2 \ | \   t \in [{\rm V}-2s,{\rm V}-s] , x    \in B_{g(t)}(p,r) \} s^{1+\alpha}
 \end{eqnarray}
  for all $p \in N,$   if  $s \in (0,1)$ and ${\rm V}-3s>0$, where  $\hat c_1$ is a constant depending on $ n,N,g(0), \Omega, g|_{\Omega},  \frac{1}{\al}, T, C_0.$ 
\end{thm}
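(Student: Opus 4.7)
The plan is to derive both estimates from the master estimate of Theorem \ref{firstmainwieIntro}, using the hypothesis $\int_N |\Sc|^{2+12\al} d\vol_{g(t)} \leq C_0$ to bound the scalar curvature term on its right-hand side, together with Theorem \ref{KaehlerIntEstCorIntro} in the K\"ahler case (or the generalised Gauss-Bonnet formula with boundary in the four-dimensional case, as in \cite{MSIM1}) to pass from a bound on $\int_N |\Rc|^2$ to a bound on $\int_N |\Rm|^2$.

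For part (i), I apply Theorem \ref{firstmainwieIntro} with ${\rm V} = l$ and endpoints $r = 0$, $s = {\rm V} = l$, for each $l \in [1, T)$. Since $({\rm V}-s)^{1-\al}$ vanishes at $s = {\rm V}$, the first term on the left-hand side reduces to $\frac{1}{2l}\int_N |\Rc_{g(l)}|^2 d\vol_{g(l)}$. On the right-hand side, the constant $\hat c_0 l^\al/\al \leq \hat c_0 T^\al/\al$ and the initial Ricci term (controlled by the smoothness of $g_0$ on $N$ together with the lower bound $2{\rm V} + \Sc_{g(0)}{\rm V}^{1-\al} \geq {\rm V}$) are bounded by the data, while the scalar curvature term satisfies
$$ \int_0^l \int_N \frac{\Sc^{2+12\al}}{(l-t)^{1-\al}} d\vol_{g(t)} dt \leq C_0 \cdot \frac{l^\al}{\al} $$
by Fubini and the hypothesis, the time singularity $(l-t)^{-(1-\al)}$ being integrable for $\al > 0$. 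This yields $\int_N |\Rc_{g(l)}|^2 d\vol_{g(l)} \leq C'$ uniformly in $l \in [1,T)$; for $l \in [0,1]$ the bound follows directly from the sup-norm estimate $|\Rm| \leq 1$ on $N$ in setting ({\bf B}). To pass to $\int_N |\Rm|^2$, Theorem \ref{KaehlerIntEstCorIntro} in the K\"ahler case gives $\int_N |\Rm|^2 \leq \int_N \Sc^2 + C$, with $\int \Sc^2 \leq \mathrm{vol}(N,g(l)) + C_0$ from the pointwise inequality $\Sc^2 \leq 1 + |\Sc|^{2+12\al}$ and $\mathrm{vol}(N,g(l))$ controlled via the volume evolution $\partt d\vol_{g(t)} = -\Sc\, d\vol_{g(t)}$ together with $\Sc \geq -1$; in the 4D case the generalised Gauss-Bonnet formula with boundary expresses $\int_N |\Rm|^2$ as a linear combination of $\int_N |\Rc|^2$, $\int_N \Sc^2$, $\chi(N)$, and a boundary curvature integral controlled by setting ({\bf B}).

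For part (ii), I apply Theorem \ref{firstmainwieIntro} with $r = {\rm V}-2s$ and $s_{\rm Thm} = {\rm V}-s$. The right-hand side is again bounded by a data-dependent constant (by arguments as in part (i), noting $(2s)^\al \leq 2^\al$ since $s \leq 1$), so the time-integral term on the left-hand side yields
$$ \int_{{\rm V}-2s}^{{\rm V}-s} \int_N ({\rm V}-t)^{1-\al} \frac{|\Rc|^4}{(2{\rm V} + \Sc ({\rm V}-t)^{1-\al})^2} d\vol_{g(t)} dt \leq C. $$
To extract the first claimed term $\hat c_1/s^{1-\al}$, I use a uniform positive lower bound on the denominator $2{\rm V} + \Sc({\rm V}-t)^{1-\al}$ (valid since $\Sc \geq -1$, $({\rm V}-t)^{1-\al} \leq (2s)^{1-\al} \leq 2$, and ${\rm V} \geq 1$), then pull out the weight $({\rm V}-t)^{1-\al} \geq s^{1-\al}$ on the interval. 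For the second term involving $K := \sup_{B_{g(t)}(p,r) \times [{\rm V}-2s,{\rm V}-s]} |\Rc|^2$, I localise the integration to the ball, where $|\Sc| \leq \sqrt{n}|\Rc| \leq \sqrt{n}K^{1/2}$ provides a pointwise upper bound on the denominator; combining this with the master inequality and carefully integrating the resulting powers of $({\rm V}-t)$ over $[{\rm V}-2s,{\rm V}-s]$ produces the factor $K \cdot s^{1+\al}$.

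The main obstacle is obtaining the sharp exponent $s^{1+\al}$ (rather than some weaker power of $s$) in the second term of (ii); this requires a delicate balance between the weight $({\rm V}-t)^{1-\al}$ coming from Theorem \ref{firstmainwieIntro}, the pointwise control of $|\Sc|$ by $\sup|\Rc|$ on the ball, and precise tracking of the exponents of $({\rm V}-t)$ after the time integration over the short interval $[{\rm V}-2s,{\rm V}-s]$.
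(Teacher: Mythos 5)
Part (i) of your proposal follows the paper's route closely: set ${\rm V}=l$, $r=0$, $s=l$ in the master estimate, note $L_{\rm V}({\rm V})=2{\rm V}$, and then pass from $\int_N|\Rc|^2$ to $\int_N|\Rm|^2$ via Gauss--Bonnet or the K\"ahler identity, handling $l\leq1$ separately by the sup-bound in ({\bf B}). This part is fine.

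Part (ii), however, has a genuine gap. To extract $\hat c_1/s^{1-\al}$ from $\int({\rm V}-t)^{1-\al}|\Rc|^4/L_{\rm V}^2\leq C$ you need an \emph{upper} bound on $L_{\rm V}(t)=2{\rm V}+\Sc({\rm V}-t)^{1-\al}$, not a lower bound; but $L_{\rm V}$ is unbounded wherever $\Sc$ is large, so no uniform upper bound on all of $N$ is available. The paper resolves this by splitting $N$ at each time into $V_t=\{\Sc({\rm V}-t)^{1-\al}\leq1\}$, where $L_{\rm V}\leq 4{\rm V}$ and the $1/s^{1-\al}$ term falls out cleanly, and $\Omega_t=\{\Sc({\rm V}-t)^{1-\al}\geq1\}$, which requires a separate argument. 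Moreover, your route to the $K\,s^{1+\al}$ term does not close: the pointwise bound $|\Sc|\leq\sqrt{n}K^{1/2}$ on the ball yields $L_{\rm V}^2\lesssim 1+Ks^{2-2\al}$, so the master estimate produces at best $\int|\Rc|^4\lesssim s^{-(1-\al)}+Ks^{1-\al}$ --- a weaker power than the claimed $Ks^{1+\al}$ (for $s<1$). You acknowledge this but do not supply a mechanism that repairs it. What actually produces the extra $s^{2\al}$ in the paper is the \emph{integral} hypothesis $\int_N|\Sc|^{2+12\al}\leq C_0$, used first to bound the measure $m(t)={\rm Vol}_{g(t)}(\Omega_t)\leq C_0({\rm V}-t)^{2-2\al}$ and then, via Cauchy--Schwarz applied to $\int_{\Omega_t}|\Rc|^2$ (pairing the master $|\Rc|^4/L_{\rm V}^2$ integrand against a weighted $\Sc^2$ factor) together with H\"older on $\Omega_t$, to obtain $\int_S^{\rm V}\int_{\Omega_t}|\Rc|^2\leq \hat c({\rm V}-S)^{1+\al}$; multiplying this by $K$ gives the sharp $Ks^{1+\al}$ on $\Omega_t$. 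A pointwise $\Sc\leq K^{1/2}$ bound cannot see the smallness of the bad set $\Omega_t$ and hence cannot reproduce this exponent.

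Also note that the paper treats the regime ${\rm V}\leq 1$ for part (ii) separately (using $|\Rc|\leq c(n)$ on $N\times[0,1]$ from ({\bf B}) and splitting $s=s^{1+\al}+s^{1-\al}$ up to constants), which your proposal does not address.
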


\begin{rem}
  
If we further assume that    $|\Rc|_{g(t)} \leq \frac{C}{t-({\rm V}-3s) } $ on $B_{g(t)}(p,r)$ for $t \in ({\rm V}-3s,{\rm V}-s]$, where  ${\rm V}-3s>0$,   as is the case for example if the  region   $B_{g({\rm V}-3s)}(p,2r)$ is almost euclidean and $M$ is closed, and $|s|\leq  c(n)r^2$ (see Perleman's Pseudolocality Theorem, Theorem 10.1  of \cite{GP}),
then   estimate  ii)  implies
$$ \int_{ {\rm V}-2s}^{{\rm V}-s} \int_{N \cap B_{g(t)}(p,r) } |\Rc_{g(t)}|_{g(t)}^4 d\vol_{g(t)} dt \\
 \leq  \frac{\hat c_1+C}{s^{1-\alpha}}\ \text{for}\ s \in (0,1). $$ 
 \end{rem}

In the case that $M$ is closed,
it is known due to a result of Bamler (\cite{Bam}, Theorem 8.1),
that a non-inflating estimate holds ${\rm Vol}_{g(t)}(B_{g(t)}(x,r)) \leq \si_1r^n$ will be satisfied for all $r \in [0,1]$ and some $\si_1>0,$ for all $ t\in [0,T)$ for all $x \in  N$.
Under a   weaker  non-inflating assumption in the general case, namely that  there exists some $p\in N,$ $V \in [1,T]$ such that 
\begin{eqnarray}
{\rm Vol}_{g(t)}(B_{g(t)}(p, \sqrt{V-t} ) ) \leq \si_1 |V-t|^2  \label{NonExpandTIntro}\ \text{ for all  } \ t\in [V-1,V),  
\end{eqnarray} 
 along with the conditions 
 from Theorem \ref{secondmainweiIntro}, we prove the following integral estimate. 

\begin{thm}\label{AnotherIntIntro}
For $n\in\N,$ $1\leq {\rm V} \leq T < \infty,$   $\alpha\in(0,\frac{1}{12})$  
let   $(M^{n},g(t))_{t\in [0,T)}$  be a smooth,  real  solution
to Ricci flow and  assume 
that $n=4$ or that $(M^n,g_0)$ is Kähler ( complex  dimension $\frac n 2,$ real dimension $n$ ) and closed.
Assume further that    $N  \subseteq M$   
  is  a  smooth  connected  real $n$-dimensional   submanifold with boundary, and  $N$ and $(M^{n},g(t))_{t\in [0,T)}$   are  as in ({\bf B}) and that 
$$\int_N |\Sc|_{g(t)}^{2+12\alpha} d\vol_{g(t)}  \leq C_0< \infty$$ for all $t\in [0,T)$ for some constant $C_0\in \R^{+},$ 
and that there exist  a $\si_1>0,$ $p\in N,$  such  that  the weak non-inflating condition \eqref{NonExpandTIntro} is satisfied.   
Then we have:
\begin{eqnarray}
&& \int_{S}^V \int_{B_{g(t)}(p,\sqrt{V-t})} |\Rc_{g(t)}|_{g(t)}^{2+ \sigma }  d\vol_{g(t)}  dt 
 \leq   \hat c_2    (V-S)^{1 + \frac{\al}{16}} \label{RiccestgoodIntro}
 \end{eqnarray} 
   for all  $\sigma>0$ sufficiently small   ( $\si\leq \al^3$ suffices   ) and   $S \in [V-1,V),$  
  where $\hat c_2< \infty$ depends on  $  \si_1,  n,N,g(0), \Omega, g|_{\Omega},  \frac{1}{\al}, T, C_0.$   
   For $V= T,$ we mean  
 \begin{eqnarray}
&& \lim_{V \upto T} \int_{S}^V \int_{B_{g(t)}(p,\sqrt{T-t})} |\Rc_{g(t)}|_{g(t)}^{2+\sigma}  d\vol_{g(t)}  dt  
  \leq  \hat c_2   (T-S)^{1 + \frac{\al}{16}} \label{Riccestgood2Intro}
 \end{eqnarray}
  
\end{thm}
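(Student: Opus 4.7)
The plan is to combine the localised spacetime $L^4$ estimate from Theorem~\ref{secondmainweiIntro}(ii) with a H\"older interpolation, using the non-inflating hypothesis \eqref{NonExpandTIntro} to control the spacetime volume of the parabolic region under consideration. The slight gain $(V-S)^{\alpha/16}$ over the trivial scaling $(V-S)^1$ will arise from summing a convergent geometric series on a dyadic time decomposition of $[S,V)$ anchored at the endpoint $V$.

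First I would set $\tau_k := 2^{-k}(V-S)$ and $t_k := V-\tau_k$, so that $[S,V) = \bigcup_{k\geq 0}[t_k, t_{k+1}]$, and introduce the parabolic slab
\[
Q_k := \{(x,t) : t \in [t_k, t_{k+1}],\ x \in B_{g(t)}(p, \sqrt{V-t})\},
\]
whose spacetime volume is bounded, via \eqref{NonExpandTIntro}, by a constant multiple of $\tau_k^{n/2+1}$. H\"older's inequality with conjugate exponents $\tfrac{4}{2+\sigma}$ and $\tfrac{4}{2-\sigma}$ then gives
\[
\int_{Q_k} |\Rc_{g(t)}|_{g(t)}^{2+\sigma}\, d\vol_{g(t)}\, dt
\leq \Bigl(\int_{Q_k} |\Rc_{g(t)}|_{g(t)}^4\, d\vol_{g(t)}\, dt\Bigr)^{(2+\sigma)/4} |Q_k|^{(2-\sigma)/4}.
\]
To control the $L^4$ factor, I would apply Theorem~\ref{secondmainweiIntro}(ii) with $s = \tau_{k+1}$ and $r = \sqrt{2\tau_{k+1}}$, which covers $B_{g(t)}(p,\sqrt{V-t})$ for all $t \in [t_k, t_{k+1}]$; this produces a main term of order $\tau_k^{\alpha-1}$ together with a contribution $\tau_k^{1+\alpha}\sup_{Q_k}|\Rc_{g(t)}|_{g(t)}^2$.

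Substituting the $L^4$ bound (pending the resolution of the sup term, see below) and the volume bound into H\"older yields $\int_{Q_k}|\Rc_{g(t)}|_{g(t)}^{2+\sigma} \leq C\,\tau_k^{\beta(\sigma)}$, where in dimension four
\[
\beta(\sigma) = \tfrac{(\alpha-1)(2+\sigma)}{4} + \tfrac{3(2-\sigma)}{4} = 1 + \tfrac{\alpha}{2} - \sigma\bigl(1-\tfrac{\alpha}{4}\bigr).
\]
For $\sigma \leq \alpha^3$ this exceeds $1 + \alpha/16$, and summing the geometric series $\sum_{k\geq 0} 2^{-k\beta(\sigma)} (V-S)^{\beta(\sigma)}$ produces the claimed bound $\hat c_2(V-S)^{1+\alpha/16}$. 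The K\"ahler case proceeds identically, with Theorem~\ref{KaehlerIntEstCorIntro} supplying the spatial $L^2$ curvature control in place of the four-dimensional Gauss--Bonnet input. Finally, the case $V = T$ follows by a limiting argument, since all constants remain stable as $V \upto T$.

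The principal obstacle is the contribution $\tau_k^{1+\alpha}\sup_{Q_k}|\Rc|^2$ on the right of Theorem~\ref{secondmainweiIntro}(ii), as the hypotheses do not provide a pointwise Ricci bound directly. This is precisely where the non-inflating assumption \eqref{NonExpandTIntro} is decisive: combined with the $L^{2+12\alpha}$ scalar curvature bound, it allows one to invoke pseudolocality-type or Bamler--Zhang pointwise estimates of the form $|\Rc_{g(t)}|_{g(t)}(x) \leq C/(V-t)$ on the parabolic neighbourhoods $Q_k$, after which the sup contribution is itself of order $\tau_k^{\alpha-1}$ and is harmlessly absorbed into the geometric summation.
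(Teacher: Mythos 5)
Your proposal runs into a genuine gap at the step where you try to dispose of the term $\tau_k^{1+\alpha}\sup_{Q_k}|\Rc_{g(t)}|_{g(t)}^2$ coming from Theorem~\ref{secondmainweiIntro}(ii). You claim that the weak non-inflating condition together with the $L^{2+12\alpha}$ bound on the scalar curvature allows one to invoke pseudolocality or Bamler--Zhang to get $|\Rc_{g(t)}|_{g(t)}\leq C/(V-t)$ on the parabolic slabs $Q_k$. This does not follow from the hypotheses. Perelman's pseudolocality requires an almost-Euclidean region at a base time, which is not assumed here, and the Bamler--Zhang curvature estimates require a pointwise (i.e.\ $L^\infty$) bound on the scalar curvature, not merely an $L^{p}$ bound. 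Indeed, avoiding exactly this kind of pointwise curvature input is the whole point of the theorem, and the paper's own remark after Theorem~\ref{secondmainweiIntro} makes clear that the bound $|\Rc|\leq C/(V-t)$ is an \emph{additional} assumption (needing, e.g., an almost-Euclidean ball at time $V-3s$), not a consequence of non-inflating plus $L^{p}$ scalar curvature control. Without it, the sup term in part (ii) is uncontrolled and your dyadic sum does not close.

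The paper circumvents this entirely by H\"oldering against the \emph{weighted} spacetime $L^4$ estimate rather than the unweighted one. Concretely, it uses the bound
$\int_0^{\rm V}\int_N \frac{|\Rc_{g(t)}|_{g(t)}^{4}(\rm V-t)^{1-\al}}{L_{\rm V}^2(t)}\,d\vol_{g(t)}\,dt\leq\hat c$
from \eqref{RIntEst}/\eqref{starstar}, which carries no sup term, and writes
$|\Rc|^{2+\si}=|\Rc|^{2+\si}\bigl(\tfrac{({\rm V}-t)^{1-\al}}{L_{\rm V}^2(t)}\bigr)^{\frac{2+\si}{4}}\bigl(\tfrac{L_{\rm V}^2(t)}{({\rm V}-t)^{1-\al}}\bigr)^{\frac{2+\si}{4}}$
before applying H\"older with exponents $\tfrac{4}{2+\si}$ and $\tfrac{4}{2-\si}$. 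The complementary factor involving $L_{\rm V}^2/({\rm V}-t)^{1-\al}$ expands into a piece controlled by the non-inflating condition and a piece involving $\Sc_{g(t)}^2$, which the $L^{2+12\al}$ scalar bound together with non-inflating handles through a second H\"older. No pointwise curvature estimate is needed, and no dyadic decomposition is needed either, since the weight $({\rm V}-t)^{1-\al}$ already encodes the parabolic gain you were trying to extract by summing over dyadic slabs. As a minor secondary point, your bound $|Q_k|\lesssim\tau_k^{n/2+1}$ is not what the weak non-inflating condition \eqref{NonExpandTIntro} supplies for $n>4$; it only gives $|Q_k|\lesssim\tau_k^{3}$ in every dimension, so the claim that "the K\"ahler case proceeds identically" would need the exponent $3$ throughout, not $n/2+1$.
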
 
\begin{rem} \label{noninflrem}
{ We remark here, that   this is not the usual non-inflating condition when $n> 4$,  and is a weaker condition in that case: The usual non-inflating condition would be
${\rm Vol}_{g(t)}(B_{g(t)}(p,r)) \leq \si r^n$ for all $r\leq 1$ and this implies 
${\rm Vol}_{g(t)}(B_{g(t)}(p,\sqrt{V-t})   ) \leq \si_1 |V-t|^{n/2} \leq \si_1 |V-t|^2$ when
$|V-t|\leq 1$.}\\
Hence, in the case that $M$ is closed,  this non-inflating condition will  hold (and need not be assumed),  due to  the result of Bamler  (\cite{Bam}, Theorem 8.1). 
\end{rem}

In the sequel to this paper, \cite{LiuSim2}, we use these estimates and other results and methods to  prove 
(see  also 
 \cite{SHA}) \begin{itemize}
\item[(a)]  non-collapsing  for solutions  $(M^n,g(t))_{t\in [0,T)},$ $n\in \N,$   $T< \infty$  
 when $ M$ is closed,  $N \subseteq M$,  $N$ and $(M^{n},g(t))_{t\in [0,T)} $    are  as in ({\bf B}),  and $$\sup\limits_{t\in [0,T)} \int_{N} |\Sc_{g(t)}|^{\frac{n}{2}+v}d\vol_{g(t)} < \infty  $$ for some $v\in(0,1)$
 (non-inflating  estimates are already known to hold when $M$ is closed due to Bamler (Theorem 8.1, \cite{Bam})). 
 \item[(b)]  distance estimates and $C^0$ orbifold convergence of $(N^4,g(t))$ 
 as $t \upto T< \infty$   for four   dimensional real closed solutions $(M^4,g(t))_{t\in [0,T)}$    when   $N$ and $(M^{4},g(t))_{t\in [0,T)}$    are  as in ({\bf B}) and  satisfy 
 $$\sup_{t\in [0,T)} \int_{N} |\Sc_{g(t)}|^{2+v}dV_{g(t)} < \infty  $$ for some $v\in (0,1)$,
 thus generalising the results of Bamler-Zhang (\cite{RBQZ, RBQZ2}) and Simon  (\cite{MSIM1, MSIM2}), where the case that the  scalar curvature remains uniformly bounded on   $[0,T)$ was considered  
\item[(c)] if $(M^4,g(t))_{t\in [0,T)}$  is a closed, real solution to Ricci flow 
 and $\sup\limits_{t\in [0,T)} \int_{M} |\Sc_{g(t)}|^{2+v}dV_{g(t)} < \infty  $ for some $v\in (0,1)$,
then the solution   can be extended for a short time $\si >0$ to
 $[0,T+ \si)$ using the orbifold Ricci flow, thus extending the results of \cite{MSIM2}, where an analogous result was shown
 in the case that the scalar curvature remains uniformly bounded  on $[0,T).$  
\end{itemize}

\section{Basic weighted  integral estimates  for the  Ricci flow in four dimensions}\label{sec:3}

In \cite{MSIM1},   the second author   proved an  integral estimate for solutions to the  Ricci flow in the four dimensional setting, when  the initial value $g(0)$ satisfies     $\inf_{M}\Sc_{g(0)}>-1.$ There it was shown that  
\begin{equation*}\label{basicest}
\begin{split}
&\ \ \ \int_{M} \frac{|\Ric_{g(S)}|^2}{\Sc_{g(S)}+2} dV_{g(S)}+\int_0^S\int_M \frac{|\Ric_{g(t)}|^4}{(\Sc_{g(t)}+2)^2} dV_{g(t)}dt\\
&\leqslant 2^{10}e^{64S}\Big(\chi+ \int_{M} \frac{|\Ric_{g(0)}|^2}{\Sc_{g(0)}+2} dV_{g(0)} +  \int_0^S\int_M\Sc_{g(t)}^2dV_{g(t)}dt \Big).
\end{split}
\end{equation*}
This was achieved by examining the evolution equation of   $\int_{M}  \ell(s) dV_{g(s)}$ for the integrand
$  \ell (s) := \frac{|\Ric_{g(s)}|_{g(s)}^2}{\Sc_{g(s)}+2},$ in conjunction with
the  generalised Gau{\ss}-Bonnet formula
\begin{equation*} 
\int_M (|\Rm_{g}|_g^2-4|\Ric_{g}|_g^2+\Sc_g^2)\ dV_g=2^5\pi^2\chi
\end{equation*}
 for closed four dimensional Riemannian manifolds, where $\chi=\chi(M)$ is the  Euler characteristic.   

In this section we prove localised weighted versions  of these integral estimates valid for  regions  of manifolds which are evolving   by  the Kähler Ricci flow in any dimension or   Ricci flow in four dimensions. The localisation is achieved by assuming that the  
regions we  consider  are geometrically controlled near their boundaries. 
In the Kähler setting, one also requires that the solutions being considered come from a potential, as is the case when  the manifold is closed. The weights we consider   are time like  ones: 
We consider solutions to the Ricci flow $(M,g(t))_{t\in [0,T)}$ with $1\leq {\rm V}< T  < \infty$ and  $\Sc_{g(t)} +  1 > 0$ for all $t\in [0,T)$ and  the integrand 
\begin{eqnarray*}
f(t):= \frac{ | \Rc_{g(t)}|_{g(t)}^2}{ 2{\rm V}   + \Sc_{g(t)}({\rm V}-t)^{1-\al}}.
\end{eqnarray*}

As a first step towards showing these weighted estimates, we present  a local result which is valid  for solutions and regions of the type described  above, in any dimension, without making the assumption that the solution is a K\"ahler-Ricci flow solution or four dimensional.

\begin{thm}
For $n\in\N,$ $1\leq {\rm V} < T < \infty,$ 
let   $(M^{n},g(t))_{t\in [0,T)},$  be a smooth,  real  solution
to Ricci flow, $\partt g(t) = -2\Rc_{g(t)}.$  
Assume further that   
 $\al \in (0,\frac 1 4),$   $N  \subseteq M,$ $\Omega$   and $(M^{n},g(t))_{t\in [0,T)}$    are  as in ({\bf B}). 
Then there exists a constant $\hat c = \hat c(N,\Omega, g|_{\Omega})< \infty$  such that for  $ b \geq   \frac{10 T} {\al}$ and 
 $L_{\rm V}(t):= 2{\rm V}   + \Sc_{g(t)}({\rm V}-t)^{1-\al}$  we have 
 
\begin{eqnarray}\label{FirstRIntEst} 
 && e^{b({\rm V}-s)^{\al}} \int_N \frac{|\Rc_{g(s)}|_{g(s)}^2}{L_{\rm V}(s)} d\vol_{g(s)} + \int_r^s  \int_N  
 \frac{\al b e^{b({\rm V}-t)^{\al}}}{2({\rm V}-t)^{1-\alpha} }  \frac{|\Rc_{g(t)}|_{g(t)}^2}{L_{\rm V}(t)} d\vol_{g(t)}dt \cr
&& \ \ \ +     \frac{9}{8}\int_r^s  \int_N e^{b({\rm V}-t)^{\al}}  ({\rm V}-t)^{1-\alpha}\frac{|\Rc_{g(t)}|_{g(t)}^4}{L_{\rm V}^2(t)}   d\vol_{g(t)}    dt  \cr
    && \leq e^{b({\rm V}-r)^{\al}} \int_N \frac{|\Rc_{g(r)}|_{g(r)}^2}{L_{\rm V}(r)} d\vol_{g(r)}   + e^{b({\rm V}-r)^{\al}} \hat c (s-r)   \cr
    &&\ \ \     + 
  \int_r^s  \int_N  \frac{e^{b({\rm V}-t)^{\al}}  }{({\rm V}-t)^{1-\alpha}} \Big(  8|\Rm_{g(s)}|_{g(s)}^2 +  T\Sc_{g(t)}^{2+4\alpha}(t) \Big) d\vol_{g(t)} dt 
\end{eqnarray} 
  for all $r<s$ with $ r,s \in (0,{\rm V}]$.
\end{thm}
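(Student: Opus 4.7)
The plan is to examine the time evolution of
\[I_b(s) := e^{b({\rm V}-s)^\al}\int_N f(s)\, dV_{g(s)}, \qquad f(t) := \frac{|\Rc_{g(t)}|_{g(t)}^2}{L_{\rm V}(t)},\]
and derive a pointwise differential inequality for $I_b$. Using the standard Ricci-flow evolutions $\partt|\Rc|^2 = \lap|\Rc|^2 - 2|\grad\Rc|^2 + Q$ with $|Q|\leq c_n|\Rc|^2|\Rm|$, $\partt \Sc = \lap\Sc + 2|\Rc|^2$, and $\partt dV_{g(t)} = -\Sc\,dV_{g(t)}$, a direct computation yields
\begin{align*}
\partt f - f\Sc &= \frac{\lap|\Rc|^2}{L_{\rm V}} - \frac{2|\grad \Rc|^2}{L_{\rm V}} + \frac{Q}{L_{\rm V}} - \frac{|\Rc|^2 \lap\Sc\,({\rm V}-t)^{1-\al}}{L_{\rm V}^2} \\
&\quad - \frac{2|\Rc|^4 ({\rm V}-t)^{1-\al}}{L_{\rm V}^2} + \frac{(1-\al)|\Rc|^2\Sc}{L_{\rm V}^2({\rm V}-t)^\al} - \frac{|\Rc|^2\Sc}{L_{\rm V}}.
\end{align*}
The key structural point is that the term $2|\Rc|^2({\rm V}-t)^{1-\al}$ appearing in $\partt L_{\rm V}$ supplies the quartic good term $-2({\rm V}-t)^{1-\al}|\Rc|^4/L_{\rm V}^2$, which will eventually produce the $9/8$-coefficient quartic term on the left-hand side of \eqref{FirstRIntEst}.

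Next I integrate over $N$ and apply integration by parts to both Laplacian contributions. Boundary integrals on $\boundary N$ are uniformly bounded by the curvature bounds on $\Omega\supset \boundary N$ guaranteed by ({\bf B}), yielding a constant $\hat c=\hat c(N,\Omega,g|_\Omega)$. The arising interior cross-gradient term $2({\rm V}-t)^{1-\al}\grad\Sc\cdot\grad|\Rc|^2/L_{\rm V}^2$ is dominated via the Kato inequality $|\grad|\Rc|^2|\leq 2|\Rc||\grad\Rc|$ and Cauchy--Schwarz (parameter $2$) by $2|\grad\Rc|^2/L_{\rm V} + 2({\rm V}-t)^{2(1-\al)}|\Rc|^2|\grad\Sc|^2/L_{\rm V}^3$; both pieces are absorbed exactly by the $-2|\grad\Rc|^2/L_{\rm V}$ term already present and by the negative contribution $-2({\rm V}-t)^{2(1-\al)}|\Rc|^2|\grad\Sc|^2/L_{\rm V}^3$ produced when integrating $|\Rc|^2\lap\Sc$ by parts. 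Thus all gradient contributions cancel.

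What remains inside the integral is handled by Young's inequality. I first split $|Q|/L_{\rm V}$ via
\[\frac{c_n|\Rc|^2|\Rm|}{L_{\rm V}} \leq \vare_1\frac{({\rm V}-t)^{1-\al}|\Rc|^4}{L_{\rm V}^2} + \frac{c_n^2}{4\vare_1}\frac{|\Rm|^2}{({\rm V}-t)^{1-\al}},\]
and then apply a second, weight-adapted Young's inequality to the residual $\Sc$-linear combination $\tfrac{(1-\al)|\Rc|^2\Sc}{L_{\rm V}^2({\rm V}-t)^\al} - \tfrac{|\Rc|^2\Sc}{L_{\rm V}}$, tuned so as to deposit a small fraction back into the quartic good term and the bulk into the right-hand side term $T\Sc^{2+4\al}/({\rm V}-t)^{1-\al}$. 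Choosing $\vare_1$ and the second splitting exponent so that the surviving quartic coefficient is exactly $9/8$ and the $|\Rm|^2$ coefficient is exactly $8$ produces the right-hand side. Residuals proportional to $f$ itself (occurring only when $\Sc\in[-1,0)$, where $|\Sc|\leq 1$) are absorbed by the LHS term $\al b({\rm V}-t)^{\al-1}f$ coming from $\partt e^{b({\rm V}-s)^\al}$, the hypothesis $b\geq 10T/\al$ ensuring, together with $L_{\rm V}\geq {\rm V}\geq 1$ and ${\rm V}-t\leq T$, that the absorption coefficient is at least $1$.

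The proof is completed by combining the identity
\[I_b(s) - I_b(r) + \int_r^s \al b({\rm V}-t)^{\al-1}e^{b({\rm V}-t)^\al}\!\int_N f\,dV\,dt = \int_r^s e^{b({\rm V}-t)^\al}\frac{d}{dt}\!\int_N f\,dV\,dt\]
with the pointwise bound on $\int_N(\partt f - f\Sc)\,dV = \tfrac{d}{dt}\int_N f\,dV$ and integrating in $t\in[r,s]$. The principal obstacle is the coefficient bookkeeping across the two Young's inequalities, particularly producing the non-obvious exponent $2+4\al$ on $\Sc$; this is where the restriction $\al<1/4$ enters, keeping all time-weights integrable and the Young exponents admissible, while $b\geq 10T/\al$ is used exactly once, to close the absorption of every $f$-proportional residual.
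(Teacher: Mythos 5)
Your computation of $\partial_t f - f\Sc$ is correct, your treatment of the gradient terms (integrating both Laplacians by parts and closing the cross term by Kato and Cauchy--Schwarz) reproduces the paper's $-2|P(t)|^2/L_{\rm V}^3 \le 0$ observation in disguise, and your Young's inequality splitting of the reaction term $4\Rm(\Rc,\Rc)/L_{\rm V}$ into $\tfrac12 ({\rm V}-t)^{1-\al}|\Rc|^4/L_{\rm V}^2 + 8|\Rm|^2/({\rm V}-t)^{1-\al}$ matches the paper's $I_1$--$I_2$ step.

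The genuine gap is your treatment of the term
\[ (1-\al)\,\frac{|\Rc_{g(t)}|^2\,\Sc_{g(t)}}{L_{\rm V}^2(t)\,({\rm V}-t)^{\al}}, \]
which you propose to handle with a ``second, weight-adapted Young's inequality'' that deposits a fraction into the quartic good term and the rest into $T|\Sc|^{2+4\al}/({\rm V}-t)^{1-\al}$. No pointwise Young's inequality can do this: if a fraction is to land in $({\rm V}-t)^{1-\al}|\Rc|^4/L_{\rm V}^2$, the only admissible conjugate exponents are $(2,2)$ (since $|\Rc|^2$ must go to $|\Rc|^4$), and the residual is then forced to be of the form $\Sc^2/(({\rm V}-t)^{1+\al}L_{\rm V}^2)$. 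The power of $\Sc$ is $2$, not $2+4\al$, and --- fatally --- the time weight $({\rm V}-t)^{-1-\al}$ is not integrable on $[r,{\rm V}]$, so the resulting inequality is empty. Raising the exponent of $\Sc$ while simultaneously improving the time weight from $-1-\al$ to $-1+\al$ is precisely the nontrivial content of this step, and the hypothesis $\al<1/4$ is used exactly there.

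What the paper actually does is split $N$ into the time-dependent regions $\{\Sc_{g(t)}({\rm V}-t)^{1-2\al}\le 1\}$ and $Z_t:=\{\Sc_{g(t)}({\rm V}-t)^{1-2\al}\ge 1\}$. On the first region the term is bounded pointwise by $|\Rc|^2/[({\rm V}-t)^{1-\al}L_{\rm V}]$ and absorbed into the $f$-proportional residual together with $I_4$ (this is where the $2T/({\rm V}-t)^{1-\al}$ coefficient and hence $b\ge 10T/\al$ come from; your bookkeeping for this part is fine). On $Z_t$, the Young residual $\Sc^2/(({\rm V}-t)^{1+\al}L_{\rm V}^2)$ is first bounded with H\"older by $({\rm V}-t)^{-1-\al}\bigl(\int_{Z_t}|\Sc|^{2+\be}\bigr)^{2/(2+\be)}\,m(t)^{\be/(2+\be)}$ where $m(t)={\rm Vol}_{g(t)}(Z_t)$, and then the Chebyshev-type measure bound $m(t)\le ({\rm V}-t)^{(1-2\al)(2+\be)}\int_{Z_t}|\Sc|^{2+\be}$ (which is where the defining threshold of $Z_t$ is used) upgrades the exponent of $\Sc$ to $2+\be$ and improves the time weight to $({\rm V}-t)^{-1-\al+\be(1-2\al)}$. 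Choosing $\be=4\al$ and using $\al<1/4$ gives $-1-\al+\be(1-2\al)\ge -1+\al$, yielding $T|\Sc|^{2+4\al}/({\rm V}-t)^{1-\al}$ as required. Without this region split and measure/H\"older argument, your proof does not close.

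A minor additional note: you write $|Q|\le c_n|\Rc|^2|\Rm|$; since the right-hand coefficient on $|\Rm|^2$ is claimed to be exactly $8$, you must use the sharp reaction term $4\Rm(\Rc,\Rc)$ (so effectively $c_n=4$), not a generic dimensional constant.
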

 \begin{rem} The integrands  in the above are well defined  for all $t\in [0,{\rm V})$ since $ L_{\rm V} (t)=   
   2{\rm V}   +  \Sc_{g(t)}({\rm V}-t)^{1-\al} \geq    {\rm V} + ({\rm V}-t)^{1-\al}  +  \Sc_{g(t)}({\rm V}-t)^{1-\al} =
{\rm V}  + (1 +  \Sc_{g(t)})({\rm V}-t)^{1-\al} \geq  {\rm V} \geq 1$ and $\al \in (0,1).$
\end{rem} 
\begin{rem}
In the case that $M$ is closed and $\Sc_{g(0)}\geq -1,$ the condition $\Sc_{g(t)} \geq -1$ for all $t\in [0,{\rm T})$ is always satisfied, in view of the strong maximal principle, and so may be removed from the list of assumptions.
\end{rem}   
\begin{rem}
The  values  $e^{b ({\rm V}-t)^{\al}}$ with $t\in[r,s]$ appearing in the formula above can be bounded from above and below by $1 \leq e^{b ({\rm V}-t)^{\al}} \leq 2$  if we restrict  to 
$s,r \in (S,{\rm V})$ where $   |S-{\rm V}| \leq  (\frac{\log 2}{b} )^{\frac 1 {\al}}.$
Alternatively they may be bounded from above and below by 
$1 \leq e^{b ({\rm V}-t)^{\al}} \leq e^{b {\rm V}^{\al}}  $    for all $ t\in [0,{\rm V}).$  
\end{rem}

\begin{proof}
We use the notation  
$L_{\rm V}(t):=   2{\rm V}  + \Sc_{g(t)}({\rm V}-t)^{1-\al}$ and $ f(t)  := \frac{ | \Rc_{g(t)}|^2_{g(t)}}{ L_{\rm V}(t)}$ in the following. As mentioned in the Remark above, we always have $L_{\rm V}(t) \geq 1$. 
The evolution equation for $  \int_N f(t)  d\vol_{g(t)}$ is
\begin{eqnarray}
&&  \partt (\int_N f(t) d\vol_{g(t)} )\cr
&& =  \int_{N} \partt(\frac{ | \Rc_{g(t)}|_{g(t)}^2}{ 2{\rm V}  + \Sc_{g(t)}({\rm V}-t)^{1-\al}})d\vol_{g(t)} + \int_N f(t) \partt d\vol_{g(t)}\cr
&& = \int_N ( \lap_{g(t)} f(t) -\frac{2|P(t)|^2}{L^3_{\rm V}(t)}  + \frac{4\Rm_{g(t)}(\Rc_{g(t)},\Rc_{g(t)}) }{L_{\rm V}(t)}
 -  \frac{2|\Rc_{g(t)}|_{g(t)}^4({\rm V}-t)^{1-\al}}{L_{\rm V}^2(t)}) d\vol_{g(t)}\cr
&& \  \ \  \ \ \
+ (1-\al) \int_N \frac{|\Rc_{g(t)}|_{g(t)}^2 \Sc_{g(t)} ({\rm V}-t)^{-\al}}{L^2_{\rm V}(t)}d\vol_{g(t)} - \int_N \Sc_{g(t)} \frac{|\Rc_{g(t)}|_{g(t)}^2}{L_{\rm V}(t)}d\vol_{g(t)}\cr
&& \leq   -\int_{\boundary N}  <\grad_{g(t)} f(t), \nu(t)>_{g(t)} d\vol_{g(t)} 
 + \int_N \frac{8|\Rm_{g(t)}|_{g(t)}^2}{({\rm V}-t)^{1-\al}} d\vol_{g(t)}  \cr
 &&   \ \ \  -\int_N \frac{3|\Rc_{g(t)}|_{g(t)}^4({\rm V}-t)^{1-\al}}{2L^2_{\rm V}(t)} d\vol_{g(t)}  
+ (1-\al) \int_N \frac{|\Rc_{g(t)}|_{g(t)}^2 \Sc_{g(t)}({\rm V}-t)^{-\al}}{L_{\rm V}^2(t)}d\vol_{g(t)}\cr
&& \ \ \ - \int_N \Sc_{g(t)} \frac{|\Rc_{g(t)}|_{g(t)}^2}{L_{\rm V}(t)}d\vol_{g(t)}\cr
&&= I_0(g|_{B_1(\boundary N)} ) +  I_1 -I_2+I_3 +I_4
\end{eqnarray}
where 
\begin{eqnarray}
&\ I_0(g|_{B_1(\boundary N)} )=-\int_{\boundary N} <\grad_{g(t)} f, \nu(t)>_{g(t)}d\vol_{g(t)},\\
&\ I_1= \int_N \frac{8|\Rm_{g(t)}|_{g(t)}^2}{({\rm V}-t)^{1-\al}} d\vol_{g(t)},\\
&\ I_2= \int_N \frac{3|\Rc_{g(t)}|_{g(t)}^4({\rm V}-t)^{1-\al}}{2L_{\rm V}(t)^2} d\vol_{g(t)}, \\
&\ I_3= (1-\al) \int_N \frac{|\Rc_{g(t)}|_{g(t)}^2 \Sc_{g(t)}({\rm V}-t)^{-\al}}{L_{\rm V}(t)^2}d\vol_{g(t)},\\
&\ I_4= - \int_N \Sc_{g(t)} \frac{|\Rc_{g(t)}|_{g(t)}^2}{L_{\rm V}(t)}d\vol_{g(t)},\\
and &\ P(t)= (\grad \Ricci_{g(t)})(\Sc_{g(t)} ({\rm V}-t)^{1-\al} +2{\rm V} ) - (\grad ({\rm V}-t)^{1-\al}\Sc_{g(t)})( \Ricci_{g(t)}).
\end{eqnarray}
By assumption $\bf (B)$, we have  $I_0(g|_{B_1(\boundary N)} )  \leq|\int_{\boundary N} <\grad_{g(t)} f(t), \nu(t)>_{g(t)} d\vol_{g(t)} | \leq \hat c< \infty$  for all $t\in [0,T),$ where $\hat c= \hat c(N,\Omega,g|_{\Omega})< \infty.$   

In the following we will estimate the integrals  $I_3,I_4.$   

We consider  the term $I_4$ first :

\begin{eqnarray}
I_4 &=& - \int_N \Sc_{g(t)}  \frac{|\Rc_{g(t)}|_{g(t)}^2}{L_{\rm V}(t)}d\vol_{g(t)}\cr
&\leq& \int_N  \frac{|\Rc_{g(t)}|_{g(t)}^2}{L_{\rm V}(t)}d\vol_{g(t)}
- \int_{N \cap \{ \Sc_{g(t)} \geq 0\}}  |\Sc_{g(t)}|   \frac{|\Rc_{g(t)}|_{g(t)}^2}{L_{\rm V}(t)}d\vol_{g(t)},
\end{eqnarray}
the last integral being no larger than zero. 

$I_3$ will be estimated in two steps.
First we consider the time dependent sets 
$ \{ 
x  \in N  \ | \  \Sc_{g(t)}(x)  ({\rm V}-t)^{1-2\al} \leq 1\} =: 
N \cap \{ \Sc_{g(t)}  ({\rm V}-t)^{1-2\al}  \leq 1\}.$
Then
\begin{eqnarray}
&&  (1-\al) \int_{ N \cap \{ \Sc_{g(t)}   ({\rm V}-t)^{1-2\al}  \leq 1\} }  \frac{|\Rc_{g(t)}|_{g(t)}^2 \Sc_{g(t)}({\rm V}-t)^{-\al}}{L^2_{\rm V}(t)}d\vol_{g(t)} \cr
&& \leq   (1-\al) \int_{ N \cap \{ \Sc_{g(t)}  ({\rm V}-t)^{1-2\al}  \leq 1\}}  \frac{|\Rc_{g(t)}|_{g(t)}^2}{({\rm V}-t)^{1-\al} L^2_{\rm V}(t)}d\vol_{g(t)} \cr
&& \leq   \int_N \frac{|\Rc_{g(t)}|_{g(t)}^2}{({\rm V}-t)^{1-\al} L_{\rm V}(t)} d\vol_{g(t)},
\end{eqnarray}
where we have used $L_{\rm V}(t) \geq 1.$ 
For the time dependent sets $    Z_t:= N \cap \{ \Sc_{g(t)}  ({\rm V}-t)^{1-2\al}  \geq 1\}$   
we define $m(t):= {\rm Vol}_{g(t)}(  Z_t ).$
Then  we see  that 
\begin{equation}
m(t) \frac{1}{ ({\rm V}-t)^{2-4\al}} \leq \int_{Z_t} \Sc_{g(t)}^2 d\vol_{g(t)}\leq  (\int_{Z_t} |\Sc_{g(t)}|^{2+\be}d\vol_{g(t)})^{\frac{2}{2+\be}} (m(t))^{\frac{\be}{2+\be}},
\end{equation} 
 that is,
\begin{equation}
m(t)^{\frac{2}{2+\be}} \leq ({\rm V}-t)^{2-4\al} (\int_{Z_t}|\Sc_{g(t)}|^{2+\be}d\vol_{g(t)})^{\frac{2}{2+\be}},
\end{equation} 
 and hence
\begin{equation}m(t)  \leq ({\rm V}-t)^{ (1-2\al)(2+\be)} \int_{Z_t} |\Sc_{g(t)}|^{2+\be}d\vol_{g(t)}.
\end{equation} 
This leads to 
\begin{eqnarray}
&&(1-\al) \int_{_{Z_t}}  \frac{|\Rc_{g(t)}|_{g(t)}^2 \Sc_{g(t)}({\rm V}-t)^{-\al}}{L^2_{\rm V}(t)}d\vol_{g(t)} \cr
&& \leq \frac{1}{4}I_2 +
(1-\al) \int_{_{Z_t} }  \frac{\Sc_{g(t)}^2 }{ ({\rm V}-t)^{1+\al} L^2_{\rm V}(t)}d\vol_{g(t)}\cr
&& \leq  \frac{1}{4}I_2 +  ({\rm V}-t)^{-1-\al}
( \int_{Z_t}  |\Sc_{g(t)}|^{2+\be}d\vol_{g(t)})^{\frac{2}{2+\be}}
m(t)^{\frac{\be}{2+\be}} \cr
&& \leq   \frac{1}{4}I_2+ ({\rm V}-t)^{-1-\al}    ( \int_{Z_t} |\Sc_{g(t)}|^{2+\be}d\vol_{g(t)})^{\frac{2}{2+\be} +  \frac{\be}{2+\be}} ({\rm V}-t)^{ \be(1-2\al) }\cr
&&= \ \frac{1}{4}I_2+ ({\rm V}-t)^{-1-\al + \be -2\be\al}      \int_{Z_t} |\Sc_{g(t)}|^{2+\be}d\vol_{g(t)} \cr
&& = \frac{1}{4}I_2+    ({\rm V}-t)^{-1+\al}({\rm V}-t)^{2\alpha(1- 4\alpha)} \int_{Z_t}  |\Sc_{g(t)}|^{2+\be}d\vol_{g(t)} \cr
&& \leq \frac{1}{4}I_2+    ({\rm V}-t)^{-1+\al} T  \int_{Z_t}  |\Sc_{g(t)}|^{2+\be}d\vol_{g(t)}, 
\end{eqnarray}
where we set $\beta =  4\al$ and use $\al < \frac{1}{4}$ in the last equality, and we use $L_{\rm V} \geq 1$ in deriving the second inequality and $T \geq 1$  in the last inequality.
 
For $f(t):= \int_N \frac{|\Rc_{g(t)}|_{g(t)}^2}{L_{\rm V}(t)} d\vol_{g(t)}$ we have shown: 
\begin{eqnarray}
 && \partt f(t)  \cr
 &&    \leq  \hat c + I_1 -\frac{3}{4}I_2+    \frac{2 T}{({\rm V}-t)^{1-\al}}   f(t)      
 +  \frac{T}{({\rm V}-t)^{1-\al}}   \int_N |\Sc_{g(t)}|^{2 +4\al} d\vol_{g(t)}.
 \end{eqnarray}

  Taking the derivative in time of $ e^{b({\rm V}-t)^{\al}} f(t)$  
 for $ b \geq    \frac{10 T}{\al}  $ we see that this implies
 
\begin{eqnarray}
&& \partt e^{b({\rm V}-t)^{\al}}f(t)  \cr
 && = -e^{b({\rm V}-t)^{\al}} \al b({\rm V}-t)^{\al -1} f(t)  +  e^{b({\rm V}-t)^{\al}}  \partt f(t) \cr
 &&  \leq -e^{b({\rm V}-t)^{\al}} \al b({\rm V}-t)^{\al -1} f(t) \cr
 &&  \ \ \  +  e^{b({\rm V}-t)^{\al}} \Big(\hat c +I_1 -\frac{3}{4}I_2 + ({\rm V}-t)^{\alpha -1}\frac{b \al}{2}f(t)
 + \frac{ T}{({\rm V}-t)^{1-\alpha}}\int_N|\Sc_{g(t)}|^{2+4\al} d\vol_{g(t)}\Big) \cr
 &&     \leq -e^{b({\rm V}-t)^{\al}} \frac{\al b}{2}({\rm V}-t)^{\al -1} f(t)\cr 
 &&  \ \ \  +  e^{b({\rm V}-t)^{\al}} \Big(\hat c +I_1 -\frac{3}{4}I_2 
 +  \frac{T}{({\rm V}-t)^{1-\alpha}}\int_N |\Sc_{g(t)}|^{2+4\al} d\vol_{g(t)}\Big)
 \end{eqnarray}
Integrating from $r$ to $s$ we obtain

\begin{eqnarray}
 && e^{b({\rm V}-s)^{\al}} \int_N \frac{|\Rc_{g(s)}|_{g(s)}^2}{L_{\rm V}(s)} d\vol_{g(s)} + \int_r^s  \int_N  
 \frac{\al b e^{b({\rm V}-t)^{\al}}}{2({\rm V}-t)^{1-\alpha} }  \frac{|\Rc_{g(t)}|_{g(t)}^2(t)}{L_{\rm V}(t)} d\vol_{g(t)}dt \cr
&& \ \ \ \ \ \ \ +    \frac{9}{8}\int_r^s  \int_N e^{b({\rm V}-t)^{\al}}  ({\rm V}-t)^{1-\alpha}\frac{|\Rc_{g(t)}|_{g(t)}^4(t)}{L^2_{\rm V}(t)}   d\vol_{g(t)}    dt \cr
    && \leq e^{b({\rm V}-r)^{\al}} \int_N \frac{|\Rc_{g(r)}|_{g(r)}^2}{L_{\rm V}(r)} d\vol_{g(r)}   + e^{b({\rm V}-r)^{\al}} \hat c (s-r) \cr
    &&   \ \ \  + 
  \int_r^s  \int_N  \frac{e^{b({\rm V}-t)^{\al}}  }{({\rm V}-t)^{1-\alpha}}  \Big(  8|\Rm_{g(t)}|_{g(t)}^2 +   T |\Sc_{g(t)}|^{2+4\alpha} \Big) d\vol_{g(t)} dt 
\end{eqnarray} 
as required.

\end{proof}

One of the terms on the right hand side of the above formula  involves  the 
full Riemannian curvature tensor. 
We will see in the following, that this may be replaced by a term only involving weighted integrals of the scalar curvature in the case that a)  the solution to the Ricci flow we are considering  
  is a four dimensional real solution, or
b) the solution to the Ricci flow we are considering is a K\"ahler-Ricci flow on a closed  manifold in any dimension.
  The key ingredient to do this will be:
  
  i) 
  the four dimensional  Chern-Gauss-Bonnet theorem on a smooth manifold $N$ with boundary in case a) and 

 ii)  Theorem \ref{KaehlerIntEst} of this paper in case b). 
 
  Theorem  \ref{KaehlerIntEst} enables us   
 to compare the $L^2$ integral of the full curvature tensor at time $t$ with the $L^2$ integral of the scalar curvature at time $t$, the initial values, and various quantities on the  boundary  of $N,$ which may be estimated in the case that the metric 
 is well controlled on a compact neighbourhood of the boundary of 
$N$.

    Theorem \ref{KaehlerIntEst} may be  seen as a local Ricci flow  version  of the well known 
integral formulae  involving the first and second Chern classes as stated in Section E, Chapter 2 in  \cite{Besse}.


\begin{thm}\label{firstmainwie}
For $n\in\N,$ $1\leq {\rm V} < T < \infty$ 
let   $(M^{n},g(t))_{t\in [0,T)},$  be a smooth,  real  solution
to Ricci flow, $\partt g(t) = -2\Rc_{g(t)}.$  
We assume that $n=4$ or that $(M^n,g_0)$ is Kähler and closed ( complex dimension $m= \frac n 2$, real dimension $n$ ) .
Assume further that 
 $\Sc_{g(t)} \geq -1$ for all $t\in [0,T),$   and  
 $\al \in (0,\frac{1}{12}),$  $N  \subseteq M,$ $\Omega$   and $(M^{n},g(t))_{t\in [0,T)}$    are  as in ({\bf B}). 
Then there exists a constant $\hat c_0 = \hat c_0(N,g_0,T,\Omega,   g|_{\Omega} ) < \infty$  such that for 
$b \geq \hat b(\al,T) =  \frac{2000 T}{\al}$ we have    for all $r<s$ with $ r,s \in (0,{\rm V}] $ that
\begin{eqnarray}\label{RIntEst} 
 &&  \int_N \frac{|\Rc_{g(s)}|_{g(s)}^2}{(2{\rm V}  + \Sc_{g(s)}({\rm V}-s)^{1-\al}} d\vol_{g(s)} \cr
 &&   +    \frac{1}{2}\int_r^s  \int_N   ({\rm V}-t)^{1-\alpha}\frac{|\Rc_{g(t)}|_{g(t)}^4}{ ( 2{\rm V}  + \Sc_{g(t)}({\rm V}-t)^{1-\al}   )^2}   d\vol_{g(t)}    dt \cr
   && \leq  e^{b({\rm V}-r)^{\al}} \frac{1}{\al}\hat c_0 (s-r)^{\al}  + 
  e^{b({\rm V}-r)^{\al}} \int_N \frac{|\Rc_{g(r)}|_{g(r)}^2 }{ 2{\rm V}  + \Sc_{g(r)}({\rm V}-r)^{1-\al} } d\vol_{g(r)}   \cr
    &&    + e^{b({\rm V}-r)^{\al}}     \int_r^s  \int_N  \frac{1 }{({\rm V}-t)^{1-\alpha}}      |\Sc_{g(t)}|^{2+12\alpha}(t) d\vol_{g(t)}    dt .
\end{eqnarray}

\end{thm}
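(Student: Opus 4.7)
The plan is to take the weighted estimate \eqref{FirstRIntEst} (already established in the previous theorem, valid in any dimension) as the starting point and reduce its right-hand side to the desired form. The only "problematic" term on the right of \eqref{FirstRIntEst} is
\[
8\int_r^s \int_N \frac{e^{b({\rm V}-t)^{\al}}}{({\rm V}-t)^{1-\al}} |\Rm_{g(t)}|_{g(t)}^2 \, d\vol_{g(t)}\, dt,
\]
so the task is to replace $|\Rm|^2$ by quantities that are either scalar-curvature controlled, or can be absorbed into the left-hand side.

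First I would apply a dimension-specific $L^2$-curvature reduction. In the real four-dimensional case, the Chern--Gauss--Bonnet theorem with boundary, applied to the compact manifold-with-boundary $(N,g(t))$, yields
\[
\int_N |\Rm_{g(t)}|_{g(t)}^2 \, d\vol_{g(t)} = 4\int_N |\Rc_{g(t)}|_{g(t)}^2\, d\vol_{g(t)} - \int_N \Sc_{g(t)}^2 \, d\vol_{g(t)} + 32\pi^2 \chi(N) + B(t),
\]
where the boundary contribution $B(t)$ involves second fundamental forms and curvatures of $(N,g(t))$ restricted to $\boundary N$. By ({\bf B}), every one of these ingredients is uniformly bounded on $\overline{\Omega}$, and $d\sigma_{g(t)}|_{\boundary N}$ is controlled by $g|_\Omega$, so $|B(t)| \leq C_1(N,g_0,\Omega,g|_{\Omega},T)$. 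This gives $\int_N |\Rm|^2 \leq 4 \int_N |\Rc|^2 + C_1'$. In the closed Kähler case, Theorem \ref{KaehlerIntEst} yields $\int_N |\Rm|^2 \leq \int_N \Sc^2 + C_2$ directly, so the Ricci reduction step below is not needed.

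In the four-dimensional case the remaining Ricci integral must still be eliminated. I write $|\Rc|^2 = \frac{|\Rc|^2}{L_{\rm V}}L_{\rm V}$ with $L_{\rm V}(t)=2{\rm V}+\Sc_{g(t)}({\rm V}-t)^{1-\al}$ and split $L_{\rm V} = 2{\rm V} + \Sc\,({\rm V}-t)^{1-\al}$. The $2{\rm V}$ piece, after multiplication by $\eta(t) := e^{b({\rm V}-t)^{\al}}/({\rm V}-t)^{1-\al}$, produces a multiple of $\frac{|\Rc|^2}{L_{\rm V}}$ with weight $\eta$, which is absorbed by the quadratic term on the left of \eqref{FirstRIntEst} because $\al b / 2 \geq 1000\,{\rm V}$ dominates any universal constant once $b \geq 2000T/\al$. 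For the cross term $\Sc(\mathrm{V}-t)^{1-\al}|\Rc|^2/L_{\rm V}$, I split $\Sc = \Sc^+ - \Sc^-$ with $\Sc^- \in [0,1]$ (the $\Sc^-$ part is again absorbed by the quadratic LHS term), and for the positive part I apply Young's inequality in the balanced form
\[
\frac{|\Rc|^2 \Sc^+}{L_{\rm V}} = \sqrt{\frac{|\Rc|^4({\rm V}-t)^{1-\al}}{L_{\rm V}^2}} \cdot \sqrt{\frac{(\Sc^+)^2}{({\rm V}-t)^{1-\al}}} \leq \frac{\varepsilon}{2}\frac{|\Rc|^4({\rm V}-t)^{1-\al}}{L_{\rm V}^2} + \frac{1}{2\varepsilon}\frac{\Sc^2}{({\rm V}-t)^{1-\al}}.
\]
This split is chosen precisely so the first piece matches the quartic term on the left of \eqref{FirstRIntEst}; picking $\varepsilon$ small enough ($\varepsilon \leq 5/128$ suffices) makes $\frac{9}{8} - 16\varepsilon \geq \frac{1}{2}$, leaving the $\frac{1}{2}$ coefficient of $|\Rc|^4$ that appears in \eqref{RIntEst}. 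The residual term is a scalar-curvature square $\int\int \eta(t) \Sc^2 \,d\vol dt$ with a universal constant.

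Finally, a one-shot Young inequality $\Sc^2 \leq 1 + |\Sc|^{2+12\al}$ and $|\Sc|^{2+4\al} \leq 1 + |\Sc|^{2+12\al}$ upgrades every scalar-curvature term to the exponent $2+12\al$ appearing on the right of \eqref{RIntEst}, with the unit contributions absorbed into the $\frac{1}{\al}\hat c_0 (s-r)^{\al}$ term via $\int_r^s \eta(t)\, dt \leq \frac{1}{\al}e^{b({\rm V}-r)^{\al}}(s-r)^{\al}$. Collecting all constants coming from $C_1, C_2, \chi(N), \mathrm{Vol}_{g_0}(N), \hat c$ from \eqref{FirstRIntEst}, and the universal coefficients in the Young steps, produces a single constant $\hat c_0 = \hat c_0(N,g_0,T,\Omega,g|_{\Omega})$. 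The main obstacle is the Young step above: a naive split of $|\Rc|^2 \Sc^+/L_{\rm V}$ introduces non-integrable time powers $({\rm V}-t)^{-3(1-\al)}$, so the factor $({\rm V}-t)^{1-\al}$ must be placed exactly symmetrically on the two sides of the inequality to match the LHS quartic integrand; it is this forced bookkeeping that raises the scalar exponent from $2+4\al$ in \eqref{FirstRIntEst} to $2+12\al$ in \eqref{RIntEst}.
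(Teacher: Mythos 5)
Your argument follows the paper's proof almost step for step: start from the dimension-free estimate \eqref{FirstRIntEst}, replace $\int_N|\Rm|^2$ via the Chern--Gauss--Bonnet theorem with boundary (in $n=4$) or Theorem \ref{KaehlerIntEst} together with Theorem \ref{KaehlerIntEstCor} (in the closed K\"ahler case), combine both cases into $\int_N|\Rm|^2\leq 4\int_N|\Rc|^2+\int_N\Sc^2+\hat c$, expand $|\Rc|^2=\tfrac{|\Rc|^2}{L_{\rm V}}L_{\rm V}$ and split $L_{\rm V}=2{\rm V}+\Sc({\rm V}-t)^{1-\al}$, absorb the $2{\rm V}$ piece into the $\tfrac{\al b}{2}$-weighted quadratic term on the left (with $b\geq 2000T/\al$), apply Young to the cross term against the quartic LHS term, and finish by raising all scalar-curvature powers to $2+12\al$ and evaluating $\int_r^s({\rm V}-t)^{\al-1}dt$. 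Two cosmetic remarks. First, your sign split $\Sc=\Sc^+-\Sc^-$ is unnecessary: when $\Sc<0$ the cross term $4\int_N\tfrac{|\Rc|^2\Sc}{L_{\rm V}}$ is already nonpositive and can simply be discarded, and the paper accordingly applies Young's inequality $4ab\leq\varepsilon a^2+\tfrac{4}{\varepsilon}b^2$ directly without separating signs. Second, your closing explanation misattributes the exponent $2+12\al$ to the Young step: that step only produces $\Sc^2$, and together with the $|\Sc|^{2+4\al}$ already present in \eqref{FirstRIntEst}, any exponent strictly above $2+4\al$ would serve as a catch-all; the paper picks $2+12\al$ for consistency with choices made later in the paper (e.g.\ $\beta=4\al$ and $v=12\al$), not because the Young bookkeeping forces it. Neither point is a gap; the proof is correct and essentially the paper's own.
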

\begin{proof}
In both the real four dimensional and K\"ahler case, we must estimate the term $\int_N |\Rm_{g(t)}|_{g(t)}^2 dV_{g(t)} $ appearing on the right side of the formula \eqref{FirstRIntEst}. 
We begin with the  real  four dimensional case.  
 The generalised Chern-Gauss-Bonnet Theorem, see for example 
 \cite{Gil}, or \cite{Troy} Sec 4.4,   
 tells us that 
 \begin{eqnarray} 
 && \int_N |\Rm_{g(t)}|_{g(t)}^2 d\vol_g(t) = 
  \int_N  4|\Rc_{g(t)}|_{g(t)}^2 d\vol_g(t)   -  \int_N \Sc_{g(t)}^2d\vol_g(t) \cr
&& \ \ \ \ +  8\pi^2 \chi(N) + c_2(N;II_{\boundary  N \subseteq B_{g(t)}(N,1)},g(t)|_{\Omega})  \cr
&& \leq \int_N  4|\Rc_{g(t)}|_{g(t)}^2  +\hat c
\end{eqnarray}
for all $t\in [0,T)$ where $\hat c< \infty $ is a constant depending only on $(N,g_0), \Omega, g|_{\Omega}, T,$ in view of ({\bf B}). 
Note that this formula and conclusion are still correct if $N$ is not orientated and $\chi(N):= \frac{1}{2}\chi(\ti N)$ 
where $\chi(\ti{N})$ is the Euler characteristic of the {\it double cover of $N$}, which is oriented (see Theorem  15.41 of \cite{Lee}).

In the Kähler case, we remember that there is a K\"ahler-Ricci flow solution
having the complex metric $\hat g_0$ obtained naturally from $g_0$ as
its initial value.    Uniqueness of solutions in the closed  case implies that the real solution $(M,g(t))_{t\in [0,T)}$ is the real solution obtained by taking the corresponding real solution which is naturally obtained  from the K\"ahler solution.  In particular,  Theorem \ref{KaehlerIntEst} is valid. Using Theorem \ref{KaehlerIntEst} in place of the generalised Chern-Gauss-Bonnet  formula, it is possible     to obtain an estimate similar to the one above for the solution $(M,g(t))_{t\in [0,T)}:$ For $m=\frac n 2, $ 
\newpage 
\begin{eqnarray}
&& \int_N |\Rm_{g(t)}|_{g(t)}^2(t)d\vol_{g(t)}\cr 
&& =\int_N \Sc_{g(t)}^2d\vol_{g(t)}+\int_N(|\Rm_{g_0}|^2_{g_0}-\Sc^2_{g_0})d\vol_{g_0}\cr
&&  \ \ \   + C(t)   -\frac{1}{(m-2)!}\int_N\sum^{m-3}_{j=0}C^{j}_{m-2} \rho^2_{g_0}\wedge\omega_0^j\wedge(-t\rho_0)^{m-2-j} \cr
&&  \ \ \  -\frac{1}{(m-2)!}\int_N\sum^{m-3}_{j=0}C^{j}_{m-2} (\rho_{g_0}^2-2c_2(\omega_0))\wedge\omega_0^j\wedge(-t\rho_0))^{m-2-j} \cr
&&\leq \int_N \Sc_{g(t)}^2(t)d\vol_{g(t)}+\hat c
\end{eqnarray}
where  
 we used Theorem \ref{KaehlerIntEstCor}  in the last step to estimate
$|C(t)|$, and 
$\hat c< \infty $ is once again a constant depending only on   on $(N,g_0), \Omega, g|_{\Omega}, T.$

In either case we have:
\begin{eqnarray}
\int_N |\Rm_{g(t)}|_{g(t)}^2 dV_{g(t)}\leq \int_N 4|\Rc_{g(t)}|_{g(t)}^2dV_{g(t)} + \int_N \Sc_{g(t)}^2 dV_{g(t)}  +\hat c \label{into}
\end{eqnarray}
for all $t\in [0,T)$.
Now we use this to estimate the term on the right hand side of \eqref{FirstRIntEst}
involving the $L^2$ norm of the full Riemannian curvature tensor.

\begin{eqnarray}
&& \int_N \frac{|\Rm_{g(t)}|_{g(t)}^2}{({\rm V}-t)^{1-\al}}  dV_{g(t)}\cr
&& \leq   4\int_N \frac{|\Rc_{g(t)}|_{g(t)}^2}{({\rm V}-t)^{1-\al}} dV_{g(t)} +  \int_N \frac{\Sc_{g(t)}^2}{({\rm V}-t)^{1-\al}} dV_{g(t)} +
 \frac{\hat c}{({\rm V}-t)^{1-\al}}  \cr
&& =   4 \int_N \frac{|\Rc_{g(t)}|_{g(t)}^2L_{\rm V}(t)}{L_{\rm V}(t)({\rm V}-t)^{1-\al}} dV_{g(t)}  +  \int_N \frac{\Sc_{g(t)}^2}{({\rm V}-t)^{1-\al}}dV_{g(t)}  +
 \frac{\hat c}{({\rm V}-t)^{1-\al}} \cr
&& = 8 {\rm V}\int_N \frac{ |\Rc_{g(t)}|_{g(t)}^2}{L_{\rm V}(t)({\rm V}-t)^{1-\al}} dV_{g(t)} +
4   \int_N \frac{|\Rc_{g(t)}|_{g(t)}^2 \Sc_{g(t)}}{L_{\rm V}(t)} dV_{g(t)} \cr 
&& \ \ \ +    \int_N \frac{\Sc_{g(t)}^2}{({\rm V}-t)^{1-\al}}dV_{g(t)}   +   \frac{\hat c}{({\rm V}-t)^{1-\al}} \cr
&& \leq  \frac{  8T }{({\rm V}-t)^{1-\al} } \int_N \frac{|\Rc_{g(t)}|_{g(t)}^2}{L_{\rm V}(t)}dV_{g(t)}+
\int_N \frac{|\Rc_{g(t)}|_{g(t)}^4 ({\rm V}-t)^{1-\al}}{40L^2_{\rm V}(t)}dV_{g(t)}\cr
&& \ \ \  +  \frac{200}{({\rm V}-t)^{1-\al}}
\int_N \Sc_{g(t)}^2dV_{g(t)}  +   
 \frac{\hat c}{({\rm V}-t)^{1-\al}}.  
\end{eqnarray}
Hence 
\begin{eqnarray}
 &&\int_r^s  \int_N  \frac{e^{b({\rm V}-t)^{\al}}  }{({\rm V}-t)^{1-\alpha}}   8|\Rm_{g(t)}|_{g(t)}^2 dV_{g(t)}dt\cr
&& \leq  \int_r^s \int_N \frac{64 T e^{b({\rm V}-t)^{\al}}  }{({\rm V}-t)^{1-\alpha}}   
  \frac{|\Rc_{g(t)}|_{g(t)}^2}{L_{\rm V}(t)} dV_{g(t)}dt\cr
  && \ \ \ +
\int_r^s  \int_N e^{b({\rm V}-t)^{\al}} \frac{|\Rc_{g(t)}|_{g(t)}^4 ({\rm V}-t)^{1-\al}}{4L^2_{\rm V}(t)} dV_{g(t)}dt\cr
&& \ \ \  + \int_r^s \int_N  e^{b({\rm V}-t)^{\al}} \frac{2000}{({\rm V}-t)^{1-\al}}
\Sc_{g(t)}^2  dV_{g(t)}dt +   
 \int_r^s \frac{\hat c e^{b({\rm V}-t)^{\al}}}{({\rm V}-t)^{1-\al}} dt. 
\end{eqnarray}
Inserting this into \eqref{FirstRIntEst}, we are able to    absorb the   terms 
$\int_r^s \int_N \frac{64 T e^{b({\rm V}-t)^{\al}}  }{({\rm V}-t)^{1-\alpha}}   
  \frac{|\Rc_{g(t)}|_{g(t)}^2}{L_{\rm V}(t)} dV_{g(t)}dt $\\
   and $
\int_r^s  \int_N  e^{b({\rm V}-t)^{\al}} \frac{|\Rc_{g(t)}|_{g(t)}^4 ({\rm V}-t)^{1-\al}}{4L^2_{\rm V}(t)}dV_{g(t)}dt$ by the terms appearing on the left hand side of \eqref{FirstRIntEst}, if we assume that  $b\al \geq 2000 T$  and remember that $T> {\rm V} \geq 1$.  In doing so we obtain

\begin{eqnarray}
 && e^{b({\rm V}-s)^{\al}} \int_N \frac{|\Rc_{g(s)}|_{g(s)}^2}{L_{\rm V}(s)} d\vol_{g(s)} 
 +    \int_r^s  \int_N e^{b({\rm V}-t)^{\al}}  ({\rm V}-t)^{1-\alpha}\frac{|\Rc_{g(t)}|_{g(t)}^4(t)}{2L_{\rm V}^2(t)}   d\vol_{g(t)}    dt \cr
    && \leq e^{b({\rm V}-r)^{\al}} \int_N \frac{|\Rc_{g(r)}|_{g(r)}^2(r)}{L_{\rm V}(r)} d\vol_{g(r)}   + e^{b({\rm V}-r)^{\al}} \frac{1}{\al}\hat c (s-r)^{\al}  \cr
    &&    + 
  e^{b({\rm V}-r)^{\al}} \int_r^s  \int_N  \frac{1  }{({\rm V}-t)^{1-\alpha}} \Big(  2000\Sc_{g(t)}^2  +  T |\Sc_{g(t)}|^{2+4\alpha}\Big) d\vol_{g(t)} dt , \label{theo}
\end{eqnarray} 
where we estimated $$|{\rm V}-r|^{\al} -|{\rm V}-s|^{\al}    = 
|{\rm V}-s + (s-r)|^{\al}- |{\rm V}-s|^{\al} \leq |{\rm V}-s|^{\al} + |s-r|^{\al}- |{\rm V}-s|^{\al}  = |s-r|^{\al}$$ for $0< \al <1,$  and  $e^{b({\rm V}-t)^{\al}} \leq e^{b({\rm V}-r)^{\al}}$ for all $ t\in [r,s].$
Using that $\Sc_{g(t)} \geq -1$, we see 
 $\partt {\rm Vol}_{g(t)}(N) = \partt (\int_N d\vol_{g(t)})  = -\int_N \Sc_{g(t)} d\vol_{g(t)} \leq  d\vol_{g(t)} ( N )$.
This means that  ${\rm Vol}_{g(t)}(N) \leq e^{T}{\rm Vol}_{g(0)}(N)   =:  C_0(T)$ and consequently $\int_N (2000 \Sc_{g(t)}^{ 2} + T|\Sc_{g(t)}|^{ 2 + 4\alpha} )  d\vol_{g(t)} \leq  \int_N |\Sc_{g(t)}|_{g(t)}^{ 2 + 12\alpha} d\vol_{g(t)} + c(\al,T) {\rm Vol}_{g(t)} (N) 
\leq  \int_N \Sc_{g(t)}^{ 2 + 12\alpha} d\vol_{g(t)}  + C_0(T). $
Using this and    $e^{b({\rm V}-s)^{\al}} \geq 1$  in  \eqref{theo}  above implies the result.
\end{proof}

If one further assumes above that the scalar curvature is bounded in the $L^{2+v}$ sense for some $v>0$ these estimates may be improved and used to   obtain further estimates.

\begin{thm}\label{secondmainwei}
For $n\in\N,$ $1\leq {\rm V} < T < \infty,$ 
let   $(M^{n},g(t))_{t\in [0,T)},$  be a smooth,  real  solution
to Ricci flow, $\partt g(t) = -2\Rc_{g(t)}.$  
We assume that $n=4$ or that $(M^n,g_0)$ is Kähler and closed ( complex dimension $m= \frac n 2$, real dimension $n$ ).
Assume further that 
 $\Sc_{g(t)} \geq -1$ for all $t\in [0,T),$   and  
 $\al \in (0,\frac{1}{12}),$  $N  \subseteq M,$ $\Omega$   and $(M^{n},g(t))_{t\in [0,T)}$    are  as in ({\bf B}) and that

$$\int_N |\Sc_{g(t)}|_{g(t)}^{2+12\alpha}(t) d\vol_{g(t)}  \leq C_0< \infty$$ for all $t\in [0,T).$ 
Then, for all $l  \in [0,T)$ we have  
\begin{eqnarray} 
&& i)   \int_{N} |\Rm_{g(l)}|_{g(l)}^2 d\vol_{g(l)}  
  \leq \hat c_1\label{Riemestgood} \\\nonumber
   && ii)   \int_{ {\rm V}-2s}^{{\rm V}-s} \int_{N \cap B_{g(t)}(p,r) } |\Rc_{g(t)}|_{g(t)}^4 d\vol_{g(t)} dt  \\
   && \ \ \    \leq \hat c_1 \frac{1}{s^{1-\alpha}}  + 
  \hat c_1 \sup \{     |\Rc_{g(t)}|_{g(t)}^2 \ | \   t \in [{\rm V}-2s,{\rm V}-s] , x    \in B_{g(t)}(p,r)    \}s^{1+\alpha  }
  \end{eqnarray}
for all $r\in (0,\infty),$ if  $s \in (0,1)$ and ${\rm V}-2s>0$, where  $\hat c_1$ is a constant depending on $ n,N,g(0), \Omega, g|_{\Omega},  \frac{1}{\al}, T, C_0.$ 
\end{thm}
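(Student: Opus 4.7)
The plan is to deduce both parts from Theorem \ref{firstmainwie}, treating each separately.

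For Part (i), I first reduce the $L^2$ bound on the Riemann tensor to one on the Ricci tensor. Via the generalised Chern-Gauss-Bonnet theorem (in the four dimensional real case) or Theorem \ref{KaehlerIntEstCor} (in the K\"ahler case), together with the boundary control from assumption (\textbf{B}), we have
\begin{equation*}
\int_N |\Rm_{g(l)}|^2 dV_{g(l)} \leq 4\int_N |\Rc|^2 dV + \int_N \Sc^2 dV + \hat c.
\end{equation*}
The $\Sc^2$ integral is controlled by H\"older's inequality from $\int_N |\Sc|^{2+12\alpha} dV \leq C_0$ together with the volume bound ${\rm Vol}_{g(t)}(N) \leq e^T {\rm Vol}_{g(0)}(N)$ coming from $\Sc \geq -1$. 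For the Ricci $L^2$ bound, when $l \geq 1$ I apply Theorem \ref{firstmainwie} with $V = l$, $r = 0$, $s = l$; at $s = V$ the weighted integrand reduces to $|\Rc_{g(l)}|^2/(2l)$, while every term on the RHS of (\ref{RIntEst}) is uniformly bounded: the initial Ricci integral by smoothness of $g_0$ on $\overline{N}$, the constant $\hat c_0 (s-r)^\alpha/\alpha$ by (\textbf{B}), and the scalar term $\int_0^l \int_N |\Sc|^{2+12\alpha}/(V-t)^{1-\alpha} dV dt$ by $C_0 l^\alpha/\alpha$. For $l < 1$, the pointwise bound $|\Rm_{g(t)}| \leq 1$ in (\textbf{B}) gives the desired estimate directly.

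For Part (ii), I start from the weighted $L^4$ term on the left of (\ref{RIntEst}), which by the argument above satisfies
\begin{equation*}
\int_0^V \int_N (V-t)^{1-\alpha}\frac{|\Rc|^4}{L_V^2(t)} dV dt \leq C_1
\end{equation*}
for a constant $C_1$ depending only on the allowed parameters. Restricting to $t \in [V-2s, V-s]$ and using $(V-t)^{1-\alpha} \geq s^{1-\alpha}$ gives $\int_{V-2s}^{V-s}\int_N |\Rc|^4/L_V^2 dV dt \leq 2C_1 /s^{1-\alpha}$. To pass from this weighted integrand to $|\Rc|^4$, I decompose pointwise
\begin{equation*}
|\Rc|^4 = \frac{|\Rc|^4 (V-t)^{1-\alpha}}{L_V^2} \cdot \frac{L_V^2}{(V-t)^{1-\alpha}},
\end{equation*}
and inside $B_{g(t)}(p,r)$ bound the second factor using $|\Sc|^2 \leq n|\Rc|^2 \leq nK$, which gives $L_V^2 \leq 8V^2 + 2nK(V-t)^{2-2\alpha}$. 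Splitting the resulting integral accordingly, the term coming from $8V^2$ contributes $\hat c_1/s^{1-\alpha}$ via the above weighted bound, while the $K$-term, after careful time integration against the weighted $L^4$ control using the remaining $(V-t)^{1-\alpha}$ factor, contributes the $\hat c_1 K s^{1+\alpha}$ piece.

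The hard part will be the careful bookkeeping of constants, in particular verifying that $\hat c_1$ depends only on the allowed quantities $n, N, g(0), \Omega, g|_\Omega, 1/\alpha, T, C_0$, and extracting the sharp exponents by tracking the $(V-t)$-weights precisely. The delicate interplay between the weighted $L^4$ integrand and the pointwise scalar bound $|\Sc|^2 \leq nK$ in the ball, rather than any coarser estimate, is what produces the clean form in (ii).
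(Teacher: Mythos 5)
Part (i) of your proposal follows the paper's route essentially exactly: bound $\int_N|\Rm|^2$ by $4\int_N|\Rc|^2 + \int_N\Sc^2 + \hat c$ via Chern--Gauss--Bonnet (resp.\ Theorem~\ref{KaehlerIntEstCor}), control the Ricci term via Theorem~\ref{firstmainwie} with $r=0,\ s={\rm V}=l$, and treat $l\le 1$ by the pointwise curvature bound in ({\bf B}). No issues there.

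Part (ii) has a genuine gap. Your pointwise decomposition
$|\Rc|^4 = \frac{|\Rc|^4({\rm V}-t)^{1-\al}}{L_{\rm V}^2}\cdot\frac{L_{\rm V}^2}{({\rm V}-t)^{1-\al}}$
together with the pointwise bound $L_{\rm V}^2\le 8{\rm V}^2+2nK({\rm V}-t)^{2-2\al}$ on the ball (where $K:=\sup|\Rc|^2$) gives, after integrating and using $s\le {\rm V}-t\le 2s$ on $[{\rm V}-2s,{\rm V}-s]$ and the global weighted $L^4$ bound \eqref{starstar},
\begin{equation*}
\int_{{\rm V}-2s}^{{\rm V}-s}\int_{N\cap B}|\Rc|^4\,d\vol_{g(t)}\,dt \;\le\; \frac{\hat c}{s^{1-\al}} \;+\; \hat c\,K\,s^{1-\al}.
\end{equation*}
The exponent in the second term is $s^{1-\al}$, not the claimed $s^{1+\al}$, and since $s\in(0,1)$ this is strictly weaker; there is no further factor of $s^{2\al}$ available from the naive weighted $L^4$ control on a time slab of length $s$. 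The extra gain of $s^{2\al}$ is precisely what makes the remark following Theorem~\ref{secondmainweiIntro} work (where $K\le C^2/s^2$ yields $\hat c_1/s^{1-\al}$ rather than $\hat c_1/s^{1+\al}$), so the sharper exponent is not cosmetic. The paper achieves it by a different decomposition: split $N$ into $V_t=\{\Sc({\rm V}-t)^{1-\al}\le 1\}$ and $\Omega_t=\{\Sc({\rm V}-t)^{1-\al}\ge 1\}$. On $V_t$ one has $L_{\rm V}\le 4{\rm V}$, which gives the $\hat c/s^{1-\al}$ piece directly from \eqref{starstar}. On $\Omega_t$ one instead bounds $\int_S^{\rm V}\int_{\Omega_t}|\Rc|^2$ (not $|\Rc|^4$) by a spacetime Cauchy--Schwarz inequality that pairs the weighted $L^4$ Ricci bound against $\int\Sc^2({\rm V}-t)^{1-\al}$, then controls the latter via the $L^{2+12\al}$ scalar bound and a volume estimate ${\rm Vol}(\Omega_t)\lesssim ({\rm V}-t)^{2-2\al}C_0$ to obtain $\hat c({\rm V}-S)^{1+\al}$; multiplying by $\sup|\Rc|^2$ then produces the $\hat c\,K\,s^{1+\al}$ term. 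Your pointwise bound $|\Sc|^2\le nK$ discards exactly this interaction with the $L^{2+12\al}$ hypothesis, which is where the extra $s^{2\al}$ comes from.
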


\begin{proof}
We first show  (i)  in the case  $l\leq 1$. 
 Since $|\Rm_{g(t)}|_{g(t)}\leq 1 $ for $t\in [0,1],$ we see that $\int_N |\Rm_{g(l)}|_{g(l)} d\vol_{g(l)} 
 \leq {\rm Vol}_{g(l)}(N) \leq e^2 {\rm Vol}_{g(0)}(N)$ and hence (i) holds in that case.
 
We now show (i) for $l\geq 1$.
 Let ${\rm V} \in [1,T)$. 
 Remembering that $L_{\rm V}(t) = 2{\rm V} + \Sc_{g(t)}({\rm V}-t)^{1-\alpha}$ and in particular that $L_{\rm V}({\rm V})= 2{\rm V}$, we see by 
 choosing  $r=0$ and $s ={\rm V} <  T$  in the estimate  \eqref{RIntEst}, that 
\begin{eqnarray}
&&  \int_N \frac{|\Rc_{g({\rm V})}|_{g({\rm V})}^2}{2{\rm V}} d\vol_{g({\rm V})}  \cr
&& = 
  \int_N \frac{|\Rc_{g(t)}|_{g(t)}^2}{L_{\rm V}({\rm V})} d\vol_{g({\rm V})}  \cr
  && 
   \leq     \hat a_1(n,N,g(0),\hat c_0,\frac{1}{\al}, T, C_0) \cr
&&    =  \hat a_1(n,N,g(0), \Omega, g|_{\Omega},  \frac{1}{\al}, T, C_0)< \infty, \label{needit}
\end{eqnarray}
In the following we denote any constant of the type $\hat a_1(n,N,g(0), \Omega, g|_{\Omega},  \frac{1}{\al}, T, C_0)$
simply be $ \hat c$ and such constants are assumed to be larger than one (if not add one to it). 
 For example the constant  $\hat c^2100$  will also be denoted by $\hat c$. 
Using inequality \eqref{into} and \eqref{needit}, we get 
$ \int_N |\Rm_{g(\rm V)}|_{g(\rm V)}^2   d\vol_{g({\rm V})}  \leq \hat c $ for arbitrary ${\rm V} \in [1,T),$ which implies (i) for $l\in[1,T)$.
Hence (i) is correct for $l\in [0,T)$.
 
 We now show (ii) for ${\rm V}\leq 1$. We have 
 \begin{eqnarray}
  &&  \int_{{\rm V}-2s}^{{\rm V}-s} \int_{N \cap B_{g(t)}(p,r)} |\Rc_{g(t)}|_{g(t)}^4 d\vol_{g(t)}dt  \cr
  &&   \leq   \Big(\int_{{\rm V}-2s}^{{\rm V}-s} \int_{N \cap B_{g(t)}(p,r)} |\Rc_{g(t)}|_{g(t)}^2 d\vol_{g(t)}dt \Big)\cr 
  && \ \ \ \ \cdot   \Big( \sup \{|\Rc_{g(t)}|_{g(t)}^2 \ | \ x \in B_{g(t)}(p,r), t \in [{\rm V}-2s,{\rm V}-s] \} \Big) \cr
  &&  \leq  s e^2 {\rm Vol}_{g(0)}(N)    \sup \{|\Rc_{g(t)}|_{g(t)}^2 \ | \ x \in B_{g(t)}(p,r), t \in [{\rm V}-2s,{\rm V}-s] \}   \cr
  && \leq e^2 {\rm Vol}_{g(0)}(N)   ( \sup \{|\Rc_{g(t)}|_{g(t)}^2\ | \ x \in B_{g(t)}(p,r), t \in [{\rm V}-2s,{\rm V}-s] \}) (s^{1+\alpha} + s^{1-\alpha})  \cr
  &&  \leq e^2 {\rm Vol}_{g(0)}(N)     \sup \{|\Rc_{g(t)}|_{g(t)}^2\ | \ x \in B_{g(t)}(p,r), t \in [{\rm V}-2s,{\rm V}-s] \} s^{1+\alpha} \cr
  && \ \ \  + e^2 {\rm Vol}_{g(0)}(N)   s^{1-\alpha} 
    \end{eqnarray} 
    and hence (ii) holds for ${\rm V} \leq 1$.\\
We now show (ii) for ${\rm V}\geq 1$.
Choosing $r=0$  and letting $s={\rm V}< T$ in the estimate  \eqref{RIntEst},  we also get
\begin{eqnarray}
  && \int_0^{\rm V} \int_N \frac{|\Rc_{g(t)}|_{g(t)}^4 ({\rm V}-t)^{1-\al} }{L_{\rm V}(t)^2} d\vol_{g(t)}dt   \leq   \hat  c\label{starstar}
  \end{eqnarray} 
 We  define time dependent sets $V_t$, $\Omega_t$    by \\
\begin{eqnarray}
V_t :=  \{ x \in N \ |  \ \Sc_{g(t)}({\rm V}-t)^{1-\al} \leq 1\},  \\
 \Omega_t := \{ x \in N \ | \ \Sc_{g(t)}({\rm V}-t)^{1-\al} \geq 1\}. 
 \end{eqnarray}
  
On $\Omega_t$ we  have for $m(t) = {\rm Vol}_{g(t)}(\Omega_t)$ that 
$\frac{1}{({\rm V}-t)^{2-2\al}}m(t)  \leq \int_{\Omega_t} \Sc_{g(t)}^2dV_{g(t)} \leq C_0
$ and hence 
\begin{eqnarray}
m(t) \leq ({\rm V}-t)^{2-2\al}C_0
\end{eqnarray}

  Using this fact, and the inequality \eqref{starstar}, we calculate  for $|{\rm V}-S|\leq 1,$  and 
   $v =12\al$, 
 \begin{eqnarray}
&& \int_{S}^{\rm V} \int_{\Omega_t} |\Rc_{g(t)}|_{g(t)}^{2}  d\vol_{g(t)}  dt \cr 
&& =  \int_{S}^{\rm V} \int_{\Omega_t} |\Rc_{g(t)}|_{g(t)}^{2}  \cdot  \Big(\frac{({\rm V}-t)^{1-\al}}{(\Sc_{g(t)}({\rm V}-t)^{1-\al})^2 } \Big)^{\frac{1}{2}}
 \cdot  \Big(\frac{(\Sc_{g(t)}({\rm V}-t)^{1-\al})^2 }{({\rm V}-t)^{1-\al}} \Big)^{\frac{1}{2}}d\vol_{g(t)}  dt \cr 
 && \leq   \Big(\int_{S}^{\rm V} \int_{\Omega_t} \frac{|\Rc_{g(t)}|_{g(t)}^{4}({\rm V}-t)^{1-\al}}{(\Sc_{g(t)}({\rm V}-t)^{1-\al})^2}  d\vol_{g(t)} dt \Big)^{\frac{1}{2} }\cdot \Big( \int_{S}^{\rm V} \int_{\Omega_t}    \frac{(\Sc_{g(t)}({\rm V}-t)^{1-\al})^2}{({\rm V}-t)^{1-\al}}         d\vol_{g(t)}  dt\Big)^{ \frac{1}{2}} \cr 
 && =    \Big(\int_{S}^{\rm V} \int_{\Omega_t} \frac{|\Rc_{g(t)}|_{g(t)}^{4}({\rm V}-t)^{1-\al}}{(\frac 1 2 \Sc_{g(t)}({\rm V}-t)^{1-\al}+\frac 1 2 \Sc_{g(t)}({\rm V}-t)^{1-\al})^2}  d\vol_{g(t)} dt\Big)^{\frac{1}{2} }\cr 
 && \ \ \ \cdot \Big( \int_{S}^{\rm V} \int_{\Omega_t}   \frac{(\Sc_{g(t)}({\rm V}-t)^{1-\al})^2}{({\rm V}-t)^{1-\al}}   d\vol_{g(t)}  dt\Big)^{ \frac{1}{2}} \cr 
 && \leq   \Big(\int_{S}^{\rm V} \int_{\Omega_t} \frac{|\Rc_{g(t)}|_{g(t)}^{4}({\rm V}-t)^{1-\al}}{(\frac 1 2  +\frac 1 4 \Sc_{g(t)}({\rm V}-t)^{1-\al})^2}  d\vol_{g(t)} dt\Big)^{\frac{1}{2} }\cdot\Big( \int_{S}^{\rm V} \int_{\Omega_t}   (\frac{(\Sc_{g(t)}({\rm V}-t)^{1-\al})^2}{({\rm V}-t)^{1-\al}} )        d\vol_{g(t)}  dt\Big)^{ \frac{1}{2}}  \cr 
 && \leq 4 \Big(\int_{S}^{\rm V} \int_{\Omega_t} \frac{|\Rc_{g(t)}|_{g(t)}^{4}({\rm V}-t)^{1-\al}}{(2 +  \Sc_{g(t)}({\rm V}-t)^{1-\al})^2}  d\vol_{g(t)} dt\Big)^{\frac{1}{2} }\cdot\Big( \int_{S}^{\rm V} \int_{\Omega_t}   (\frac{(\Sc_{g(t)}({\rm V}-t)^{1-\al})^2}{({\rm V}-t)^{1-\al}} )        d\vol_{g(t)}  dt\Big)^{ \frac{1}{2}}  \cr 
 && =   4{\rm V} \Big(\int_{S}^{\rm V} \int_{\Omega_t} \frac{|\Rc_{g(t)}|_{g(t)}^{4}({\rm V}-t)^{1-\al}}{(2{\rm V} +  {\rm V}\Sc_{g(t)}({\rm V}-t)^{1-\al})^2}  d\vol_{g(t)} dt\Big)^{\frac{1}{2} }\cdot\Big( \int_{S}^{\rm V} \int_{\Omega_t}   \frac{(\Sc_{g(t)}({\rm V}-t)^{1-\al})^2}{({\rm V}-t)^{1-\al}}         d\vol_{g(t)}  dt\Big)^{ \frac{1}{2}}  \cr 
 &&  \leq 4{\rm V} \Big(\int_{S}^{\rm V} \int_{\Omega_t} \frac{|\Rc_{g(t)}|_{g(t)}^{4}({\rm V}-t)^{1-\al}}{(2{\rm V} +  \Sc_{g(t)}({\rm V}-t)^{1-\al})^2}  d\vol_{g(t)} dt\Big)^{\frac{1}{2} }\cdot\Big( \int_{S}^{\rm V} \int_{\Omega_t}  \frac{(\Sc_{g(t)}({\rm V}-t)^{1-\al})^2}{({\rm V}-t)^{1-\al}}         d\vol_{g(t)}  dt\Big)^{ \frac{1}{2}} \cr 
  && =4{\rm V}\Big(  \int_{S}^{\rm V} \int_{\Omega_t} \frac{|\Rc_{g(t)}|_{g(t)}^{4}({\rm V}-t)^{1-\al}}{L_{\rm V}(t)^2}  d\vol_{g(t)} dt\Big)^{\frac{1}{2} } \cdot   \Big( \int_{S}^{\rm V} \int_{\Omega_t}   \Sc_{g(t)}^2 ({\rm V}-t)^{1-\al}       d\vol_{g(t)}  dt\Big)^{ \frac{1}{2}} \cr 
&& \leq \hat c    \Big( \int_{S}^{\rm V} ({\rm V}-t)^{1-\al  } \int_{\Omega_t}  \Sc_{g(t)}^2      d\vol_{g(t)}  dt\Big)^{\frac{1}{2}}  \cr 
  && \leq   \hat c     \Big( \int_{S}^{\rm V} ({\rm V}-t)^{1-\al  } (\int_{\Omega_t} \Sc_{g(t)}^{2 (1+\frac{v}{2})}  d\vol_{g(t)} )^{\frac{1}{(1+\frac{v}{2})}} m(t)^{\frac{v}{2(1+\frac{v}{2})}}       dt \Big)^{ \frac{1}{2}}  \cr
 && \leq    \hat c     C_0    ( \int_{S}^{\rm V} ({\rm V}-t)^{1-\al }   ({\rm V}-t)^{\frac{v}{4}}dt )^{ \frac{1}{2}} \cr
 && =      \hat c    (({\rm V}-S)^{2 -\alpha + \frac{12\al}{4}} )^{ \frac{1}{2}} \cr 
 && = \hat c      ({\rm V}-S)^{1+  \al }  \label{star}
\end{eqnarray}
where we use $\frac 1 2 \Sc_{g(t)}({\rm V}-t)^{1-\al} \geq    \frac{1}{4}\Sc_{g(t)}({\rm V}-t)^{1-\al}$ on $\Omega_t $ in the second inequality, and we use that $|{\rm V}-t| \leq 1$ for $t\in [S,{\rm V}]$ since $|{\rm V}-S|\leq 1$. Hence 
\begin{eqnarray}
 &&    \int_{{\rm V}-2s}^{{\rm V}-s}  \int_{N \cap 
 \Omega_t \cap B_{g(t)}(p,r) }|\Rc_{g(t)}|_{g(t)}^4 d\vol_{g(t)} dt  \cr 
 && 
    \leq   \sup \{ |\Rc_{g(t)}|_{g(t)}^2 \ | \ t \in [{\rm V}-2s,{\rm V}-s ] ,x  \in B_{g(t)}(p,r) \cap N\}\cr
 &&   \ \ \  \cdot \Big(  \int_{{\rm V}-2s}^{{\rm V}-s} 
 \int_{N \cap  \Omega_t \cap B_{g(t)}(p,r) }|\Rc_{g(t)}|_{g(t)}^2 d\vol_{g(t)} dt \Big)  \cr
  &&  \leq  \sup \{ |\Rc_{g(t)}|_{g(t)}^2 \ | \ t \in [{\rm V}-2s,{\rm V}-s ]  ,x  \in B_{g(t)}(p,r) \cap N\} 
  \cr
  && \ \ \ \cdot  \Big( \int_{{\rm V}-2s}^{{\rm V}-s} 
   \int_{  \Omega_t   }|\Rc_{g(t)}|_{g(t)}^2 d\vol_{g(t)} dt  \Big) \cr
  &&  \leq \hat c \sup \{ |\Rc_{g(t)}|_{g(t)}^2\ | \ t \in [{\rm V}-2s,{\rm V}-s ]  ,x  \in B_{g(t)}(p,r) \cap N\} s^{1+\al }  \label{ineqy1}
 \end{eqnarray}
 in view of \eqref{star} with $S= {\rm V}-2s$.

Using    inequality  \eqref{starstar} again, we see 
\begin{eqnarray}\label{inbetw}
&&  \int_{{\rm V}-2s}^{{\rm V}} \int_{N \cap B_{g(t)}(p,r) } \frac{|\Rc_{g(t)}|_{g(t)}^4({\rm V}-t)^{1-\alpha} }{L^2_{\rm V}(t)} d\vol_{g(t)} dt  \leq \hat c.
\end{eqnarray} 
 Using this inequality,  $ 0\leq s \leq |{\rm V}-t|$ for $ t \in ({\rm V}-2s,{\rm V}-s)$,$ {\rm V}\geq 1$ and   $L_{\rm V}\leq 4{\rm V}$ on $V_t$ when ${\rm V}\geq 1$, we obtain   
\begin{eqnarray}
&&
 \int_{{\rm V}-2s}^{{\rm V}-s} \int_{N \cap B_{g(t)}(p,r) \cap V_t} |\Rc_{g(t)}|_{g(t)}^4 s^{1-\alpha}  d\vol_{g(t)} dt \cr 
 &&   \leq   100{\rm V}^2 \int_{{\rm V}-2s}^{{\rm V}-s} \int_{N \cap B_{g(t)}(p,r)  \cap V_t}\frac{|\Rc_{g(t)}|_{g(t)}^4({\rm V}-t)^{1-\alpha}}{100{\rm V}^2}  d\vol_{g(t)} dt \cr
&& \leq 100{\rm V}^2 \int_{{\rm V}-2s}^{{\rm V} } \int_{N \cap B_{g(t)}(p,r)  \cap V_t}\frac{|\Rc_{g(t)}|_{g(t)}^4({\rm V}-t)^{1-\alpha}}{L^2_{\rm V}(t)}  d\vol_{g(t)} dt \cr
&& \leq 
  \hat c,
\end{eqnarray}
and hence 
 \begin{eqnarray}
&&
 \int_{{\rm V}-2s}^{{\rm V}-s}   \int_{N \cap B_{g(t)}(p,r)  \cap V_t}  |\Rc_{g(t)}|_{g(t)}^4   d\vol_{g(t)} dt \cr
&& \leq \frac{\hat c}{s^{1-\alpha} }. \label{ineqy2} 
\end{eqnarray}

The inequalities \eqref{ineqy1} and \eqref{ineqy2} show us that 
\begin{eqnarray}
 &&    \int_{{\rm V}-2s}^{{\rm V}-s} \int_{N  \cap B_{g(t)}(p,r) }|\Rc_{g(t)}|_{g(t)}^4 d\vol_{g(t)} dt \cr
   && =   \int_{{\rm V}-2s}^{{\rm V}-s} \int_{N \cap  B_{g(t)}(p,r)  \cap V_t}|\Rc_{g(t)}|_{g(t)}^4 d\vol_{g(t)} dt \cr
   && \ \ +  \int_{{\rm V}-2s}^{{\rm V}-s} \int_{N \cap    B_{g(t)}(p,r)  \cap \Omega_t } |\Rc_{g(t)}|_{g(t)}^4 d\vol_{g(t)} dt   \cr
   &&  \leq \frac{\hat c}{s^{1-\alpha}} + \hat c \sup \{ |\Rc_{g(t)}|_{g(t)}^2\ | \ t \in [V-2s,V-s] , x \in B_{g(t)}(p,r) \cap N\} s^{1+ \alpha} .
   \end{eqnarray}
     Hence, (ii) holds for ${\rm V}\geq 1.$ 
 
    \end{proof}

  Under a further {\it weak non-inflating} assumption,  namely that 
${\rm Vol}_{g(t)}(B_{g(t)}(p,\sqrt{V-t})) \leq \si_1 |V-t|^2$   for some constant $\si_1< \infty$ and some fixed $p\in N,$    for     all $t\in [V-1 ,V),$   for some $V \in [1,T],$  
we obtain further   local integral estimates. This is not the usual non-inflating condition when $n> 4$,  and is a weaker condition in that case: The usual non-inflating condition would be
${\rm Vol}_{g(t)}(B_{g(t)}(p,r)) \leq \si r^n$ for all $r\leq 1$ and this implies 
${\rm Vol}_{g(t)}(B_{g(t)}(p,\sqrt{V-t})   ) \leq \si_1 |V-t|^{n/2} \leq \si_1 |V-t|^2$ when
$|V-t|\leq 1$.

\begin{thm}\label{AnotherInt}
For $n\in\N,$ $1\leq {\rm V} \leq  T < \infty,$ 
let   $(M^{n},g(t))_{t\in [0,T)},$  be a smooth,  real  solution
to Ricci flow, $\partt g(t) = -2\Rc_{g(t)}.$  
We assume that $n=4$ or that $(M^n,g_0)$ is Kähler and closed ( complex dimension $m= \frac n 2$, real dimension $n$ ).
Assume further that 
 $\Sc_{g(t)} \geq -1$ for all $t\in [0,T),$   and  
 $\al \in (0,\frac{1}{12}),$  $N  \subseteq M,$ $\Omega$   and $(M^{n},g(t))_{t\in [0,T)}$    are  as in ({\bf B}) and that

$$\int_N |\Sc_{g(t)}|_{g(t)}^{2+12\alpha}(t) d\vol_{g(t)}  \leq C_0< \infty$$ for all $t\in [0,T)$ 
and  that there exist  a $\si_1>0,$ $p\in N,$ 
  such  that  
\begin{eqnarray}
{\rm Vol}_{g(t)}(B_{g(t)}(p, \sqrt{V-t} ) ) \leq \si_1 |V-t|^2  \label{NonExpandT}
\end{eqnarray}
   for     all $t\in [V-1,V)$ (as we noted  in Remark  \ref{noninflrem}, this   non-inflating assumption is not necessary in the case that $M$ is closed, due to the work of Bamler \cite{Bam}).
Then we have
\begin{eqnarray}
&& \int_{S}^V \int_{B_{g(t)}(p,\sqrt{V-t})} |\Rc_{g(t)}|_{g(t)}^{2+ \si}  d\vol_{g(t)}  dt \cr
&& \leq   \hat c_2    (V-S)^{1 + \frac{\al}{16}} \label{Riccestgood}
 \end{eqnarray} 
   for all $\sigma>0$ sufficiently small   ($\si \leq \al^3$ suffices) and  $S \in [V-1,V),$  
 where $\hat c_2< \infty$ depends on $\si_1, n,N,g(0), \Omega, g|_{\Omega}, \al , T, C_0 .$   
   For $V= T,$ we mean  
 \begin{eqnarray}
  \lim_{V \upto T} \int_{S}^V \int_{B_{g(t)}(p,\sqrt{T-t})} |\Rc_{g(t)}|_{g(t)}^{2+ \si}   d\vol_{g(t)}  dt \leq  \hat c_2   (T-S)^{1 + \frac{\al}{16}} \label{Riccestgood2}
 \end{eqnarray}
  
\end{thm}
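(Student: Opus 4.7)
I would partition the time-dependent ball $B_t := B_{g(t)}(p,\sqrt{V-t})$ into the two regions that appear in the proof of Theorem \ref{secondmainwei}, namely the ``low scalar'' set $V_t := \{x \in N : \Sc_{g(t)}(V-t)^{1-\al} \leq 1\}$ and the ``high scalar'' set $\Omega_t := \{x \in N : \Sc_{g(t)}(V-t)^{1-\al} \geq 1\}$, and estimate $\int_S^V \int_{V_t \cap B_t} |\Rc_{g(t)}|^{2+\si}\, d\vol_{g(t)}\, dt$ and $\int_S^V \int_{\Omega_t \cap B_t} |\Rc_{g(t)}|^{2+\si}\, d\vol_{g(t)}\, dt$ separately. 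Both estimates rest on the weighted $L^4$ bound \eqref{starstar} and on the hypothesis ${\rm Vol}_{g(t)}(B_t) \leq \si_1 (V-t)^2$; for the $\Omega_t$ estimate I additionally need the $L^2$ estimate \eqref{star} and the $L^{2+12\al}$ hypothesis on $\Sc_{g(t)}$. The decomposition is natural because $L_{\rm V}(t) = 2{\rm V} + \Sc_{g(t)}(V-t)^{1-\al}$ admits simple two-sided comparisons on each piece.

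On $V_t$, since $V-t \leq 1$ and $V \geq 1$ we have $L_{\rm V} \in [V,3V]$, so \eqref{starstar} at once gives $\int_S^V \int_{V_t} |\Rc_{g(t)}|^4 (V-t)^{1-\al}\, d\vol_{g(t)}\, dt \leq \hat c$. I would apply H\"older in space with exponents $4/(2+\si)$ and $4/(2-\si)$, via the factorisation
\begin{equation*}
|\Rc_{g(t)}|^{2+\si} = \bigl[|\Rc_{g(t)}|^4 (V-t)^{1-\al}\bigr]^{(2+\si)/4} \cdot (V-t)^{-(1-\al)(2+\si)/4},
\end{equation*}
using the non-inflating bound to dominate the second factor by $\hat c\,(V-t)^{(2+2\al-3\si+\al\si)/4}$, and then H\"older in time with the same exponents. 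The resulting estimate is
\begin{equation*}
\int_S^V \int_{V_t \cap B_t} |\Rc_{g(t)}|^{2+\si}\, d\vol_{g(t)}\, dt \leq \hat c\,(V-S)^{1+\frac{\al}{2}-\si+\frac{\al\si}{4}},
\end{equation*}
whose exponent exceeds $1 + \frac{\al}{16}$ for $\si \leq \al^3$ (given $\al \leq \frac{1}{12}$).

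On $\Omega_t$, the key observation is $L_{\rm V} \leq (2{\rm V}+1)\Sc_{g(t)}(V-t)^{1-\al} \leq 3T\,\Sc_{g(t)}(V-t)^{1-\al}$, whence $L_{\rm V}^2/(V-t)^{1-\al} \leq 9T^2 \Sc_{g(t)}^2 (V-t)^{1-\al}$. I would apply a three-fold H\"older inequality to the factorisation
\begin{equation*}
|\Rc_{g(t)}|^{2+\si} = \bigl(|\Rc_{g(t)}|^2\bigr)^{(1+\si)/2} \cdot \Bigl(\frac{|\Rc_{g(t)}|^4 (V-t)^{1-\al}}{L_{\rm V}^2}\Bigr)^{1/4} \cdot \Bigl(\frac{L_{\rm V}^2}{(V-t)^{1-\al}}\Bigr)^{1/4}
\end{equation*}
with H\"older exponents $p = 2/(1+\si)$, $q = 4$, $r = 4/(1-2\si)$ (these satisfy $1/p + 1/q + 1/r = 1$). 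The first factor is controlled by \eqref{star} and contributes $\hat c\,(V-S)^{(1+\al)(1+\si)/2}$; the second is uniformly bounded by \eqref{starstar}; for the third I would use the elementary inequality $x^{2/(1-2\si)} \leq x^{2+12\al} + 1$ (valid when $\si \leq \al^3$, since this forces $2/(1-2\si) \leq 2+12\al$) together with the $L^{2+12\al}$ hypothesis on $\Sc_{g(t)}$ to obtain the spatial bound $\int_{\Omega_t \cap B_t}(L_{\rm V}^2/(V-t)^{1-\al})^{1/(1-2\si)}\, d\vol_{g(t)} \leq \hat c\,(V-t)^{(1-\al)/(1-2\si)}$, and integrating in time yields a further factor $\hat c\,(V-S)^{(2-\al-2\si)/4}$. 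The product gives an exponent of $V-S$ equal to $\frac{(1+\al)(1+\si)}{2} + \frac{2-\al-2\si}{4} = 1 + \frac{\al}{4} + \frac{\al\si}{2} \geq 1 + \frac{\al}{16}$.

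The main obstacle is designing the three-fold H\"older split on $\Omega_t$ so that the three nonstandard ingredients --- the $L^2$ Ricci bound on $\Omega_t$ with its $(V-S)^{1+\al}$ decay, the weighted $L^4$ Ricci bound \eqref{starstar}, and the $L^{2+12\al}$ scalar curvature bound --- are all simultaneously consumed while the resulting time exponent remains strictly greater than $1$; a two-factor H\"older does not suffice because the only unconditional $L^4$-type Ricci bound is weighted by $(V-t)^{1-\al}/L_{\rm V}^2$, which degenerates precisely where $\Sc_{g(t)}$ is large. Summing the two regional contributions yields \eqref{Riccestgood} for $V < T$, and the case $V = T$ follows by taking $V \upto T$: all constants are uniform on $V \in [1,T]$, and Fatou's lemma preserves the integral inequality in the limit.
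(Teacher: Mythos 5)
Your proof is correct in substance and takes a genuinely different route from the paper's. The paper does \emph{not} split the domain into $V_t$ and $\Omega_t$: it applies a single two-factor H\"older over all of $B_{g(t)}(p,\sqrt{V-t})$, using the factorisation $|\Rc|^{2+\si} = |\Rc|^{2+\si}\bigl((V-t)^{1-\al}/L_{\rm V}^2\bigr)^{(2+\si)/4}\bigl(L_{\rm V}^2/(V-t)^{1-\al}\bigr)^{(2+\si)/4}$; the first factor is absorbed by the weighted $L^4$ bound, and the second factor's integrand $L_{\rm V}^2/(V-t)^{1-\al}$ is decomposed \emph{algebraically} into ${\rm V}^2/(V-t)^{1-\al}+\Sc^2(V-t)^{1-\al}$ (up to constants), which then brings in the non-inflating hypothesis and, via a second H\"older with exponent chosen so that $(2+\be_\si)(1+v)=2+12\al$, the scalar curvature hypothesis. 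You instead decompose the \emph{domain} up front, exploit the two-sided control $L_{\rm V}\in[{\rm V},3{\rm V}]$ on $V_t$ (avoiding the paper's ${\rm V}^2/(V-t)^{1-\al}$ term entirely), and use a three-fold H\"older on $\Omega_t$ whose first ingredient is the $L^2$ Ricci bound \eqref{star} --- an estimate the paper's proof of this theorem never invokes. Both routes land on an exponent $\geq 1+\frac{\al}{16}$ under $\si\leq\al^3$; your argument is slightly more baroque (three-fold versus two-fold H\"older, and it consumes \eqref{star} as an extra ingredient), but it makes the mechanism behind the $(V-S)^{1+\frac{\al}{16}}$ decay more transparent by isolating the two regimes. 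One step you should not omit: before any of this, the paper checks whether $\dist_{g(V-1)}(p,\partial N)\leq 5$. In that case $B_{g(t)}(p,\sqrt{V-t})\subseteq\Omega$ for $t\in[V-1,V]$, where the curvature is bounded by $1$ under ({\bf B}), and the estimate is trivial; otherwise $B_{g(t)}(p,\sqrt{V-t})\subseteq N$, which is what justifies your decomposition $B_t=(V_t\cap B_t)\cup(\Omega_t\cap B_t)$ into subsets of $N$ actually covering the whole ball. Without that reduction, part of $B_t$ could lie outside $N$ and escape both of your regional estimates.
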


 \begin{proof}

 Constants which only depend at most on $  \si_1, n,N,g(0), \Omega, g|_{\Omega},  \frac{1}{\al}, T, C_0, g|_{N\times [0,1]}$ shall simply be denoted by $\hat c$.    The constant may change from line to line, but will still be denoted by $\hat c$. 
 
If $\dist_{g(V-1)}(p,\boundary N) \leq 5$ then $\dist_{g(t)}(p,\boundary N) \leq 9 $ for all $t\in [V-1,V]$  due to the condition ({\bf B}), and so $B_{g(t)}(p,\sqrt{V-t}) \subseteq \Omega$, where $\Omega $ comes from ({\bf B}) and the curvature is bounded by $1$	on $\Omega$, and hence the result holds.

If $\dist_{g(V-1)}(p,\boundary N) \geq 5$  then , due to the condition ({\bf B}), we have
$\dist_{g(t)}(p,\boundary N) >1$ for all $t\in [V-1,V]$ and so   $B_{g(t)}(p,\sqrt{V-t}) \subseteq N$ for all $t\in [V-1,V].$ 
 
Now , we   use a  similar calculation to \eqref{star}  combined with the non-inflating estimate.
 For $S>0$ with $|V-S|\leq 1,$ using Hölder's inequality and  equation \eqref{RIntEst}, we get
\begin{eqnarray}
&& \int_{S}^{\rm V} \int_{B_{g(t)}(p,\sqrt{{\rm V}-t})} |\Rc_{g(t)}|_{g(t)}^{2+\si}  d\vol_{g(t)}  dt \cr
&& =  \int_{S}^{\rm V}\int_{B_{g(t)}(p,\sqrt{{\rm V}-t})}   |\Rc_{g(t)}|_{g(t)}^{2+\si}  \cdot \Big(\frac{({\rm V}-t)^{1-\al}}{L^2_{\rm V}(t)}  \Big)^{\frac{2+\si}{4}}
 \cdot   \Big(\frac{L^2_{\rm V}(t)}{({\rm V}-t)^{1-\al}}  \Big)^{\frac{2+\si}{4}}d\vol_{g(t)}  dt \cr
 && \leq    \Big(\int_{S}^{\rm V} \int_{B_{g(t)}(p,\sqrt{{\rm V}-t})} \frac{|\Rc_{g(t)}|_{g(t)}^{4}({\rm V}-t)^{1-\al}}{L^2_{\rm V}(t)}  d\vol_{g(t)} dt \Big)^{\frac{2+\si}{4} }\cr
 && \ \ \ \ \ \ \ \ \ \ \ \cdot    \Big( \int_{S}^{\rm V} \int_{B_{g(t)}(p,\sqrt{{\rm V}-t})}   \Big(\frac{L^2_{\rm V}(t)}{({\rm V}-t)^{1-\al}} \Big)^{\frac{(2+\si)}{(2-\si)}}        d\vol_{g(t)}  dt \Big)^{ \frac{2-\si}{4}}\cr
&& \leq \hat c  \Big( \int_{S}^{\rm V} \int_{B_{g(t)}(p,\sqrt{{\rm V}-t})}   \Big(\frac{L^2_{\rm V}(t)}{({\rm V}-t)^{1-\al}} \Big)^{\frac{(2+\si)}{(2-\si)}}        d\vol_{g(t)}  dt\Big)^{ \frac{2-\si}{4}}   \cr
&&  \leq \hat c   \Big( \int_{S}^{\rm V} \int_{B_{g(t)}(p,\sqrt{{\rm V}-t})}   \Big(\frac{ {\rm V}^2  + \Sc_{g(t)}^2({\rm V}-t)^{2(1-\al)}}{({\rm V}-t)^{1-\al}} \Big)^{\frac{(2+\si)}{(2-\si)}}        d\vol_{g(t)}  dt\Big)^{ \frac{2-\si}{4}}   \cr
&& =    \hat c   \Big( \int_{S}^{\rm V} \int_{B_{g(t)}(p,\sqrt{{\rm V}-t})}   \Big(\frac{ {\rm V}^2 }{({\rm V}-t)^{1-\al}}
 + \Sc_{g(t)}^2 ({\rm V}-t)^{1-\al} \Big)^{\frac{(2+\si)}{(2-\si)}}        d\vol_{g(t)}  dt \Big)^{ \frac{2-\si}{4}}.  \label{itty}
 \end{eqnarray} 
We  write $     \frac{(2+\si)(1-\al)}{(2-\si)} = 1 -\al + \ep_{\si}$  where  $\ep_{\si}= \frac{2\si(1-\al)}{2-\si}   \to 0 \ \ {\rm as  }\ \  \si \to 0, $
 and  $ \ \frac{2(2+\si)}{(2-\si)} = 2+\be_{\si}$ where $\be_{\si}  = \frac{4\si}{2-\si} \to 0 \ \ {\rm as  }\ \  \si   \to 0. $
 
We choose $v>0$ such that $(1+v)(2+\beta_{\si}) = 2+ 12\alpha,$ for $\si>0$ sufficiently small 
 : that is we set $v:= \frac{2+ 12\alpha}{2+\be_{\si}}-1
 =  \frac{  {12}\alpha-\be_\si}{2+\be_{\si} }
 \in   ( \frac{{11}\alpha}{2}, {6} \alpha),$   { for $\si >0$ sufficiently small ($\si \leq \al^3$ suffices),}  
 and hence 
 $\frac{2v}{1+v} \geq  4\alpha.$

 Then { we have}
 \begin{eqnarray}  
&& \int_{S}^{\rm V} \int_{B_{g(t)}(p,\sqrt{{\rm V}-t})} |\Rc_{g(t)}|_{g(t)}^{2+\si}  d\vol_{g(t)}  dt \cr 
&& \leq  \hat c   \Big( \int_{S}^{\rm V} \int_{B_{g(t)}(p,\sqrt{{\rm V}-t})}   \Big(\frac{ {\rm V}^2 }{({\rm V}-t)^{1-\al}}
 + \Sc_{g(t)}^2 ({\rm V}-t)^{1-\al} \Big)^{\frac{(2+\si)}{(2-\si)}}        d\vol_{g(t)}  dt \Big)^{ \frac{2-\si}{4}} \cr
 && \leq  \hat c  \Big( \int_{S}^{\rm V} \int_{B_{g(t)}(p,\sqrt{{\rm V}-t})}    \frac{ {\rm V}^4 }{({\rm V}-t)^{1-\al +\ep_{\si} }}
 + |\Sc_{g(t)}|^{2+\be_{\si}} ({\rm V}-t)^{1-\al +\ep_{\si} }          d\vol_{g(t)}  dt  \Big)^{ \frac{2-\si}{4}}  \cr 
 && \leq   \hat c  \Big[  \int_{S}^{\rm V} {\rm V}^4  \si_1 ({\rm V}-t)^{1+\al- \ep_{\si} }dt  \cr 
 && \ \ \ \ \ \ + \int_S^{\rm V}  ({\rm V}-t)^{1-\al + \ep_{\si}} \int_{B_{g(t)}(p,\sqrt{{\rm V}-t})} |\Sc_{g(t)}|^{2+\be_{\si}}  d\vol_{g(t)}  dt\Big]^{ \frac{2-\si}{4}}\cr 
 && \leq  \hat c \Big[    ({\rm V}-S)^{2+\al - \ep_{\si}}   \cr 
&& \ \ \ \ \  \ \   + \int_S^{\rm V}  ({\rm V}-t)^{1-\al+\ep_{\si} } \Big(\int_{B_{g(t)}(p,\sqrt{{\rm V}-t})} |\Sc_{g(t)}|^{2+{12}\alpha}  d\vol_{g(t)} \Big)^{\frac{1}{(1+v)}}  (\si_1 |{\rm V}-t|^2)^{ \frac{v }{1+v}}dt \Big]^{ \frac{2-\si}{4}}\cr 
 && \leq  \hat c \Big(  ( {\rm V}-S)^{2+ \al -\ep_{\si}}    + (\si_1)^{\frac v{1+v} }   \hat c_2 \int_S^{\rm V} ({\rm V}-t)^{1-\al + \ep_{\si} +   \frac{2v}{1+v}} dt   \Big)^{ \frac{2-\si}{4}}\cr 
 && \leq  \hat c  \Big(    ( {\rm V}-S)^{2+\frac{\al}{ 2}} +  ({\rm V}-S)^{2+\frac{\alpha}{2}-\frac{3\alpha}{2}+\ep_{\si} + \frac{2v }{1+v}}\Big)^{ \frac{2-\si}{4}} \cr 
  && \leq  \hat c  \Big(    ( {\rm V}-S)^{2+\frac{\al}{ 2}} +  ({\rm V}-S)^{2+\frac{\alpha}{2}}\Big)^{ \frac{2-\si}{4}} \cr 
 && \leq  \hat c   ({\rm V}-S)^{1 + \frac{\al}{16}}.
 \end{eqnarray}

    Letting ${\rm V} \upto T$  and using Fatou's Lemma  gives us the required estimate. 
 \end{proof}

\section{Local integral estimates for the K\"ahler-Ricci flow}\label{sec:5}
In this section, we derive local integral estimates, in certain cases,  for the K\"ahler-Ricci flow $(\ref{Kahler})$
\begin{eqnarray}\label{Kahler}
\begin{cases}
  \frac{\partial\omega(t)}{\partial t}=-\Ric_{\omega(t)},\\
  \\
  \omega(t)|_{t=0}=\omega_0
  \end{cases}
 \end{eqnarray}
where $\omega_0$ is a smooth K\"ahler metric on  a closed manifold.

In  local complex coordinates $(z_1,\cdots,z_n)$, we write the K\"ahler form $\omega$, the Ricci form $\rho_g$ and form $c_2(\omega)$ for a given fixed Kähler manifold as follows.
\begin{equation*}\label{09003}
\begin{split}
&\ \omega= \frac{\sqrt{-1}}{2\pi}g_{i\bar{j}}dz^i\wedge d\bar{z}^j,\ \ \ \rho_{g}=\frac{\sqrt{-1}}{2\pi}\Sc_{i\bar{j}}dz^i\wedge d\bar{z}^j,\\
&\ \Omega^i_j=\frac{\sqrt{-1}}{2\pi}g^{i\bar{p}}\Sc_{j\bar{p}k\bar{l}}dz^k\wedge d\bar{z}^l,\ \ \ c_2(\omega)=\frac{1}{2}\sum^{n}_{i,j=1}(\Omega^i_i \wedge\Omega^j_j-\Omega^i_j\wedge\Omega^j_i).
\end{split}
\end{equation*}
In fact, $c_{2}(\omega)$ is a real closed $(2,2)$-form which represents the second Chern class $c_2(M)$ and $\rho_g=\sum\limits_{i=1}^n\Omega^i_i$.

We recall further  that the the following formulae for closed Kähler manifolds  hold  in any dimension. 
 This was first observed essentially by Apte \cite{APTE}. A detailed proof can be found in Zheng's work \cite{ZHENG}. 
\begin{lem}\label{400}(Apte \cite{APTE}, Zheng \cite{ZHENG}) Let $(M,\omega_g)$ be a smooth closed  K\"ahler manifold with complex dimension $n$. Then  
\begin{equation}\label{401a}
c_1^2(M)\cdot[\omega_g]^{n-2}=(n-2)!\int_M (\Sc_g^2-|\Ric_g|_g^2)\ dV_g,
\end{equation}
\begin{equation}\label{401b}
c_2(M)\cdot[\omega_g]^{n-2}=\frac{(n-2)!}{2}\int_M (\Sc_g^2-2|\Ric_g|_g^2+|\Rm_g|_g^2)\ dV_g
\end{equation}
\begin{equation}\label{401c}
(c_2(M)-c^2_1(M))\cdot[\omega_g]^{n-2}=\frac{(n-2)!}{2}\int_M (|\Rm_g|_g^2-\Sc_g^2)\ dV_g.
\end{equation}
\end{lem}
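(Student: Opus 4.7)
The plan is to reduce all three identities to the single statement (\ref{401b}), from which (\ref{401a}) and (\ref{401c}) follow by algebraic manipulation of the Chern classes. Since $c_1(M)$ is represented by $\rho_g$ (with the normalization chosen in the excerpt), one has $c_1^2(M) \cdot [\omega_g]^{n-2} = \int_M \rho_g \wedge \rho_g \wedge \omega_g^{n-2}$, and once (\ref{401a}) is established the identity (\ref{401c}) is obtained by forming $c_2 - c_1^2$ and subtracting (\ref{401a}) from (\ref{401b}). So the task reduces to proving (\ref{401a}) and (\ref{401b}) directly.

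Both identities are pointwise statements: the integrands on the two sides agree as $(n,n)$-forms at every point, and integration yields the global claim. The key algebraic tool I would establish first is the following wedge-product identity for two $(1,1)$-forms $\alpha = \alpha_{i\bar j}\, dz^i \wedge d\bar z^j$ and $\beta = \beta_{i\bar j}\, dz^i \wedge d\bar z^j$ on a Kähler manifold of complex dimension $n$:
\begin{equation*}
\alpha \wedge \beta \wedge \omega_g^{n-2} \;=\; \frac{1}{n(n-1)}\bigl((\tr_{\omega_g}\alpha)(\tr_{\omega_g}\beta) - \langle \alpha, \beta\rangle_{\omega_g}\bigr)\,\omega_g^n,
\end{equation*}
where $\tr_{\omega_g}\alpha = g^{i\bar j}\alpha_{i\bar j}$ and $\langle \alpha,\beta\rangle_{\omega_g} = g^{i\bar l}g^{k\bar j}\alpha_{i\bar j}\beta_{k\bar l}$. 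This is verified in holomorphic normal coordinates at a point $p$ (so $g_{i\bar j}(p)=\delta_{ij}$) by expanding $\omega_g^{n-2}$ as a sum over unordered index pairs and counting wedge contributions.

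Applying this identity with $\alpha=\beta=\rho_g$, and using $\tr_{\omega_g}\rho_g = \Sc_g$ and $\langle \rho_g,\rho_g\rangle_{\omega_g} = |\Ric_g|_g^2$, one obtains (\ref{401a}) up to the stated factorial. For (\ref{401b}), I would use the given expression
\begin{equation*}
c_2(\omega_g) = \tfrac{1}{2}\!\sum_{i,j}\!\bigl(\Omega^i_i\wedge\Omega^j_j - \Omega^i_j\wedge\Omega^j_i\bigr),
\end{equation*}
so that $\sum_{i,j}\Omega^i_i\wedge\Omega^j_j = \rho_g\wedge\rho_g$ recovers (\ref{401a})-type contributions, while the second sum is treated by applying the $(1,1)$-form identity above to each pair $\Omega^i_j, \Omega^j_i$ and then summing over the matrix indices. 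In normal coordinates the summation collapses the $g^{k\bar l}$-contractions coming from $\omega_g^{n-2}$ against the curvature components $R_{j\bar p k\bar l}$, producing precisely $|\Rm_g|_g^2 - |\Ric_g|_g^2$ pointwise. Combining the two contributions gives (\ref{401b}), and (\ref{401c}) is then immediate.

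The main obstacle is purely combinatorial: keeping track of the factors of $(n-2)!/n!$, the normalization prefactors $\sqrt{-1}/(2\pi)$ entering $\omega$, $\rho_g$ and $\Omega^i_j$, and the signs of the wedge products of $(1,1)$-forms. I would check the final constants against the known low-dimensional case $n=2$, where (\ref{401c}) reduces to the classical $\int_M(|\Rm|^2 - \Sc^2)\,dV = (c_2-c_1^2)[M]$ (up to a known factor), and against the Fubini–Study metric on $\mathbb{CP}^n$, where every quantity is explicit; this pins down the normalization and confirms the statements.
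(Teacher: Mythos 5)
Your approach is essentially the same as the paper's: the paper also works pointwise in holomorphic normal coordinates, carries out exactly the wedge-product index count you abstract into your general $(1,1)$-form identity (its equations \eqref{092202} and \eqref{0922020} are precisely your identity applied to $\alpha=\beta=\rho_g$ and to $\alpha=\Omega^i_j$, $\beta=\Omega^j_i$), obtains the pointwise Lemma~\ref{09001}, and then integrates using $c_1(M)=[\rho_g]$, $c_2(M)=[c_2(\omega)]$ to get Lemma~\ref{400}. The only cosmetic difference is that you prove \eqref{401a} and \eqref{401b} and deduce \eqref{401c}, while the paper proves \eqref{401a} and \eqref{401c} (via $\Omega^i_j\wedge\Omega^j_i$) and deduces \eqref{401b}.
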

 The following lemma is the pointwise version of Lemma \ref{400}, and the proof thereof has been included in  Appendix A, for the readers convenience. 
\begin{lem}\label{09001}(Apte \cite{APTE}, Zheng \cite{ZHENG}) Let $(M,\omega_g)$ be a smooth K\"ahler manifold with complex dimension $n$. Then
\begin{equation*}\label{09002}
\begin{split}
\rho_{g}\wedge \rho_{g}\wedge \omega^{n-2}&=\frac{1}{n(n-1)}(\Sc^2_{g}-|\Ric_{g}|_{g}^2)\ \omega^n,\\
c_2(\omega)\wedge \omega^{n-2}&=\frac{1}{2n(n-1)}(\Sc^2_{g}-2|\Ric_{g}|_{g}^2+|\Rm_g|_{g}^2)\ \omega^n,\\
(\rho_g\wedge \rho_g-2c_2(\omega))\wedge\omega^{n-2}&=\frac{1}{n(n-1)}(|\Ric_{g}|_{g}^2-|\Rm_g|_{g}^2)\ \omega^n. 
\end{split}
\end{equation*}
\end{lem}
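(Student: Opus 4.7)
The plan is to prove all three pointwise identities by fixing a point $p\in M$ and working in complex normal coordinates there, so that $g_{i\bar j}(p) = \delta_{ij}$. In this frame all the pointwise tensor norms reduce to standard Euclidean sums, and the wedge products of $(1,1)$-forms become pure combinatorics.

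The main technical input is the following elementary lemma, valid for any two real $(1,1)$-forms $\alpha = \frac{\sqrt{-1}}{2\pi}\alpha_{i\bar j}\,dz^i\wedge d\bar z^j$ and $\beta = \frac{\sqrt{-1}}{2\pi}\beta_{i\bar j}\,dz^i\wedge d\bar z^j$ at a point where $g_{i\bar j}=\delta_{ij}$:
\begin{equation*}
\alpha\wedge\beta\wedge\omega^{n-2} \;=\; \frac{1}{n(n-1)}\Bigl[(\operatorname{tr}_\omega\alpha)(\operatorname{tr}_\omega\beta) - \langle\alpha,\beta\rangle_\omega\Bigr]\,\omega^n,
\end{equation*}
where $\operatorname{tr}_\omega\alpha = \sum_i \alpha_{i\bar i}$ and $\langle\alpha,\beta\rangle_\omega=\sum_{i,j}\alpha_{i\bar j}\overline{\beta_{i\bar j}}$. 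This identity is proven by a direct combinatorial expansion of $\omega^{n-2}$ and matching multi-indices; the factor $1/n(n-1)$ is the ratio of $(n-2)!$ to $n!$.

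Given this base identity, the first formula is immediate: take $\alpha=\beta=\rho_g$, observe that $\operatorname{tr}_\omega\rho_g = g^{i\bar j}R_{i\bar j}=\Sc_g$ and $\langle\rho_g,\rho_g\rangle_\omega = |\Ric_g|_g^2$. For the second formula, I decompose
\begin{equation*}
c_2(\omega)\wedge\omega^{n-2} = \tfrac{1}{2}\sum_{i,j}\Omega^i_i\wedge\Omega^j_j\wedge\omega^{n-2} - \tfrac{1}{2}\sum_{i,j}\Omega^i_j\wedge\Omega^j_i\wedge\omega^{n-2}.
\end{equation*}
Since $\sum_i\Omega^i_i = \rho_g$, the first sum equals $\tfrac12\rho_g\wedge\rho_g\wedge\omega^{n-2}$, which by the first identity contributes $\tfrac{1}{2n(n-1)}(\Sc_g^2-|\Ric_g|_g^2)\omega^n$. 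For the second sum, each $\Omega^i_j$ is a $(1,1)$-form with coefficients $R_{j\bar i k\bar l}$ (after using $g^{i\bar p}=\delta_{ip}$ in the frame), so applying the base identity to the pair $(\Omega^i_j,\Omega^j_i)$ and then summing over $i,j$ gives traces $\sum_k R_{j\bar i k\bar k}=R_{j\bar i}$ (a Ricci contraction using the Kähler symmetry $R_{j\bar i k\bar k}=R_{k\bar i j\bar k}$ via the Bianchi identity) and a full contraction $\sum_{i,j,k,l}|R_{j\bar i k\bar l}|^2 = |\Rm_g|_g^2$. Adding up, one obtains
\begin{equation*}
\sum_{i,j}\Omega^i_j\wedge\Omega^j_i\wedge\omega^{n-2} = \frac{1}{n(n-1)}\bigl(|\Ric_g|_g^2 - |\Rm_g|_g^2\bigr)\omega^n,
\end{equation*}
and combining the two pieces yields the second formula. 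The third formula follows by subtraction.

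The main obstacle is the careful bookkeeping in the $\sum_{i,j}\Omega^i_j\wedge\Omega^j_i$ term: one must correctly identify which contractions of the Kähler curvature tensor appear as ``trace'' and ``inner product'' in the base identity, and use the Kähler symmetries $R_{j\bar p k\bar l}=R_{k\bar p j\bar l}=\overline{R_{p\bar j l\bar k}}$ to recognize the resulting sums as $|\Ric_g|_g^2$ and $|\Rm_g|_g^2$ with the correct signs and normalization. Once this is done, the three identities drop out cleanly.
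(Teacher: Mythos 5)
Your proposal is correct and follows essentially the same route as the paper's Appendix A: pass to complex normal coordinates at a point, expand $\alpha\wedge\beta\wedge\omega^{n-2}$ by matching multi-indices to obtain a trace-minus-contraction structure with the factor $\frac{1}{n(n-1)}$, and then identify the resulting curvature contractions as $\Sc^2$, $|\Ric|^2$, $|\Rm|^2$. One technical caveat worth fixing: you state your base lemma only for \emph{real} $(1,1)$-forms, with the Hermitian inner product $\langle\alpha,\beta\rangle_\omega=\sum\alpha_{i\bar j}\overline{\beta_{i\bar j}}$, but then apply it to the pairs $(\Omega^i_j,\Omega^j_i)$, which are not individually real; the identity you actually use is $\alpha\wedge\beta\wedge\omega^{n-2}=\frac{1}{n(n-1)}\bigl[(\operatorname{tr}\alpha)(\operatorname{tr}\beta)-\sum_{k,m}\alpha_{k\bar m}\beta_{m\bar k}\bigr]\omega^n$ \emph{without} conjugation, which holds for arbitrary $(1,1)$-forms and reduces to your statement when $\alpha,\beta$ are real. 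Once stated that way (or once one notes that each $\Omega^i_j\wedge\Omega^j_i$ is itself a real $(2,2)$-form since $(\Omega^i_j)_{k\bar l}=R_{j\bar ik\bar l}=\overline{(\Omega^j_i)_{l\bar k}}$), the contractions $\sum R_{j\bar ik\bar l}R_{i\bar jl\bar k}=|\Rm_g|_g^2$ and $\sum R_{j\bar i}R_{i\bar j}=|\Ric_g|_g^2$ come out exactly as in the paper's display \eqref{0922020}, and the rest of your argument matches the paper's.
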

Since $\rho_g$ is a real closed $(1,1)$-form, by the following $dd^{c}$-Lemma, we deduce that there exists a real $(1,1)$-form $\alpha(t)$ such that
\begin{equation}\label{09004}
\rho_g\wedge \rho_g-2c_2(\omega)=\rho_{g_0}\wedge \rho_{g_0}-2c_2(\omega_0)+\sqrt{-1}\partial\bar{\partial}\alpha(t).
\end{equation}
Here $d^c=\sqrt{-1}(\bar{\partial}-\partial)$, and hence $dd^c=2\sqrt{-1}\partial\bar{\partial}$. The $dd^{c}$-Lemma we use  here is from Deligne-Griffiths-Morgan-Sullivan's paper \cite{DGMS}.
\begin{lem}\label{09006} (Deligne-Griffiths-Morgan-Sullivan, $dd^{c}$-Lemma, Lemma $(5.11)$ in \cite{DGMS}) Let $(M,\omega_g)$ be a smooth closed K\"ahler manifold.   If $\psi$ is a differential form such that $d\psi=0$ and $d^c\psi=0$, and such that $\psi=d\gamma$, then $\psi=dd^c\alpha$ for some $\alpha$.

\end{lem}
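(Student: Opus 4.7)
The plan is to prove the $dd^c$-Lemma using Hodge theory on the compact K\"ahler manifold $(M,\omega_g)$, exploiting the coincidence $\Delta_d = 2\Delta_\partial = 2\Delta_{\bar\partial}$ of the three Laplacians that follows from the K\"ahler identities $[\Lambda,\bar\partial] = -i\partial^*$ and $[\Lambda,\partial] = i\bar\partial^*$. This coincidence ensures that the harmonic projections $\mathcal{H}_d,\mathcal{H}_\partial,\mathcal{H}_{\bar\partial}$ all agree and, crucially, that the common Green's operator $G$ commutes with $\partial$, $\bar\partial$ and their formal adjoints.

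First I would reduce to the case when $\psi$ has pure bidegree $(p,q)$, since both the hypotheses and the conclusion split by type. The hypotheses $d\psi=0$ and $d^c\psi=0$ together are equivalent to $\partial\psi=0$ and $\bar\partial\psi=0$, while the $d$-exactness of $\psi$ forces $\mathcal{H}_d(\psi)=0$, hence also $\mathcal{H}_{\bar\partial}(\psi)=0$. Applying the $\bar\partial$-Hodge decomposition
\[
\psi = \mathcal{H}_{\bar\partial}(\psi) + \bar\partial\bar\partial^* G\psi + \bar\partial^*\bar\partial G\psi
\]
and using $\bar\partial\psi=0$ (whence $\bar\partial G\psi = G\bar\partial\psi = 0$) collapses this to $\psi = \bar\partial\bar\partial^* G\psi$. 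Now, invoking $\bar\partial^* = -i[\Lambda,\partial]$ together with $\partial G\psi = G\partial\psi = 0$ rewrites the inner factor as $\bar\partial^* G\psi = i\partial(\Lambda G\psi)$, so
\[
\psi \;=\; \bar\partial\bigl(i\partial \Lambda G\psi\bigr) \;=\; -i\,\partial\bar\partial(\Lambda G\psi),
\]
using $\bar\partial\partial = -\partial\bar\partial$. Since $dd^c = 2i\partial\bar\partial$, the choice $\alpha := -\tfrac12 \Lambda G\psi$ gives $\psi = dd^c\alpha$, as required.

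The main obstacle is purely bookkeeping: keeping signs and factors of $i$ straight across the K\"ahler identities and verifying the commutation $G\partial = \partial G$ in the K\"ahler setting, which ultimately rests on the equality of the three Laplacians and on compactness of $M$ (so that the Hodge decomposition and Green's operator exist on $L^2$-forms). Nothing in this argument is sharp or delicate beyond those standard facts; the content of the lemma really is a one-line computation once the Hodge-theoretic machinery is in place.
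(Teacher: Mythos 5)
The paper does not prove this lemma; it is quoted as a known result and cited to Deligne--Griffiths--Morgan--Sullivan (Lemma 5.11 of \cite{DGMS}), so there is no ``paper's own proof'' to compare against. Your argument is nonetheless a correct, complete proof and is essentially the standard Hodge-theoretic derivation that one finds in the Kähler part of \cite{DGMS} and in textbook treatments: reduce to a pure $(p,q)$-type component, observe that $d\psi=d^c\psi=0$ is equivalent to $\partial\psi=\bar\partial\psi=0$, use $\Delta_d=2\Delta_{\bar\partial}$ to identify harmonic projections and to get $G$ commuting with $\partial,\bar\partial$, kill the harmonic and $\bar\partial^*\bar\partial$-pieces in the $\bar\partial$-Hodge decomposition, and then convert $\bar\partial\bar\partial^*G\psi$ into a $\partial\bar\partial$-exact form via the Kähler identity $\bar\partial^*=-i[\Lambda,\partial]$ and $\partial G\psi=0$. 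The sign bookkeeping and the final normalization $\alpha=-\tfrac12\Lambda G\psi$ (so that $\psi=dd^c\alpha$ with $dd^c=2\sqrt{-1}\partial\bar\partial$, matching the paper's convention) are all consistent; and since $\Lambda$ and $G$ preserve reality and bidegree, $\alpha$ is automatically a real $(p-1,q-1)$-form when $\psi$ is a real $(p,q)$-form, which is what the paper actually needs in \eqref{09004}. The only nitpick is that the closedness of $M$ should be highlighted as the place where the Hodge decomposition, the Green's operator, and the orthogonality $\mathcal{H}_d(d\gamma)=0$ are all used; you mention compactness at the end, which suffices.
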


   { Denote $g=g(t)$ and $\omega=\omega(t)$. The change in time of the metric and the K\"ahler form satisfy  the identities 
\begin{equation}\label{0900700000}
\begin{split}
&\rho_{g}=\rho_{g_0}+\sqrt{-1}\partial\bar{\partial}f(t),\\
&\omega_t = \omega_0 - t \rho_0 + \sqrt{-1}\partial\bar{\partial}\varphi(t)\\
&  \ \ \  =  \omega_0  + \Omega_t \\
\end{split}
\end{equation}
where \begin{eqnarray}\label{rickeqns}
&&f(t)=\log\frac{\omega_0^n}{\omega^n} \cr
&& \varphi(t) = \int_0^t \log\Big( \frac{\omega^n(s)}{\omega_0^n}\Big)  ds \cr
&&  \Omega_t := -t\rho_0   +  \sqrt{-1}\partial\bar{\partial}\varphi(t).
\end{eqnarray}
$\Omega_t$ is a $(1,1)$ form, but we do not claim that it belongs to the  K\"ahler class of a metric.}

\begin{lem}\label{09029} Let $(M,\omega_0)$ be a smooth closed K\"ahler manifold with complex dimension $n$. Then along the K\"ahler-Ricci flow (\ref{Kahler}), we have the following formulae.
\begin{equation*}\label{09008}
\begin{split}
\frac{1}{n(n-1)}(|\Ric_{g}|_{g}^2-\Sc_{g}^2)\ \omega^n&=\frac{1}{n(n-1)}(|\Ric_{g_0}|_{g_0}^2-\Sc^2_{g_0})\ \omega_0^n-\sqrt{-1}\partial\bar{\partial}f(t)\wedge\sqrt{-1}\partial\bar{\partial}f(t)\wedge\omega^{n-2}\\
&-2\sqrt{-1}\partial\bar{\partial}f(t)\wedge \rho_{g_0}\wedge\omega^{n-2}-\sum^{n-3}_{j=0}C^{j}_{n-2} \rho^2_{g_0}\wedge\omega_0^j\wedge\Omega_t^{n-2-j},
\end{split}
\end{equation*}
and
\begin{equation}\label{09009}
\begin{split}
\frac{1}{n(n-1)}(|\Ric_{g}|_{g}^2-|\Rm_g|^2_{g})\ \omega^n&=\frac{1}{n(n-1)}(|\Ric_{g_0}|_{g_0}^2-|\Rm_{g_0}|^2_{g_0})\ \omega_0^n+\sqrt{-1}\partial\bar{\partial}\alpha(t)\wedge\omega^{n-2}\\
&\ \ \ +\sum^{n-3}_{j=0}C^{j}_{n-2}(\rho_{g_0}^2-2c_2(\omega_0))\wedge\omega_0^j\wedge\Omega_t^{n-2-j}.
\end{split}
\end{equation}
\end{lem}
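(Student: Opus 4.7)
The plan is to derive both identities as purely algebraic consequences of the pointwise Apte formulae of Lemma~\ref{09001} applied to $\omega=\omega(t)$, together with the flow-level identities \eqref{0900700000} and the $dd^{c}$-representation \eqref{09004}.

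For the first identity, I would start from
\[
-\rho_g\wedge\rho_g\wedge\omega^{n-2}=\frac{1}{n(n-1)}\bigl(|\Ric_g|_g^2-\Sc_g^2\bigr)\,\omega^n,
\]
which is just Lemma~\ref{09001} rewritten. Substituting $\rho_g=\rho_{g_0}+\sqrt{-1}\partial\bar\partial f(t)$ gives
\[
\rho_g\wedge\rho_g=\rho_{g_0}\wedge\rho_{g_0}+2\,\rho_{g_0}\wedge\sqrt{-1}\partial\bar\partial f+\sqrt{-1}\partial\bar\partial f\wedge\sqrt{-1}\partial\bar\partial f.
\]
Since $\omega_0$ and $\Omega_t$ are real $(1,1)$-forms they commute under $\wedge$, and so the binomial expansion
\[
\omega^{n-2}=(\omega_0+\Omega_t)^{n-2}=\sum_{j=0}^{n-2}\binom{n-2}{j}\omega_0^{j}\wedge \Omega_t^{\,n-2-j}
\]
holds. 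I would plug this into $\rho_{g_0}^2\wedge\omega^{n-2}$ and peel off the $j=n-2$ term; by Lemma~\ref{09001} applied \emph{at time $0$} this term equals $\frac{1}{n(n-1)}(\Sc_{g_0}^2-|\Ric_{g_0}|_{g_0}^2)\omega_0^n$. The remaining indices $j=0,\dots,n-3$ produce the sum $\sum_{j=0}^{n-3}\binom{n-2}{j}\rho_{g_0}^2\wedge\omega_0^j\wedge\Omega_t^{\,n-2-j}$, while the two mixed terms involving $\sqrt{-1}\partial\bar\partial f$, wedged against $\omega^{n-2}$ without further expansion, give the remaining correction terms, yielding exactly the first claimed identity.

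The second identity is proved in the same way. I would start from
\[
(\rho_g\wedge\rho_g-2c_2(\omega))\wedge\omega^{n-2}=\frac{1}{n(n-1)}\bigl(|\Ric_g|_g^2-|\Rm_g|_g^2\bigr)\,\omega^n
\]
(again Lemma~\ref{09001}) and use \eqref{09004} to replace $\rho_g^2-2c_2(\omega)$ by $\rho_{g_0}^2-2c_2(\omega_0)+\sqrt{-1}\partial\bar\partial\alpha(t)$. The same binomial expansion of $\omega^{n-2}$ then isolates the $j=n-2$ contribution $(\rho_{g_0}^2-2c_2(\omega_0))\wedge\omega_0^{n-2}$, which by Lemma~\ref{09001} at time $0$ equals $\frac{1}{n(n-1)}(|\Ric_{g_0}|_{g_0}^2-|\Rm_{g_0}|_{g_0}^2)\omega_0^n$, leaving the stated sum over $j=0,\dots,n-3$ together with the $\sqrt{-1}\partial\bar\partial\alpha(t)\wedge\omega^{n-2}$ correction.

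There is essentially no analytic obstacle: both identities are wedge-product bookkeeping once the $dd^c$-lemma has supplied the $(1,1)$-form $\alpha(t)$, a step that has already been carried out as \eqref{09004}. The only care needed is to peel off the $j=n-2$ term from the binomial expansion of $\omega^{n-2}$ correctly, so that Lemma~\ref{09001} at time $0$ converts it into the desired time-$0$ curvature integrand; since all wedges involved are among even-degree forms, no sign subtleties arise.
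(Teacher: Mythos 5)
Your proposal follows essentially the same route as the paper's proof: both identities are obtained by rewriting Lemma~\ref{09001} for $\omega(t)$, substituting $\rho_g=\rho_{g_0}+\sqrt{-1}\partial\bar\partial f(t)$ (resp.\ using \eqref{09004}), expanding $\omega^{n-2}=(\omega_0+\Omega_t)^{n-2}$ binomially only in the term involving $\rho_{g_0}^2$ (resp.\ $\rho_{g_0}^2-2c_2(\omega_0)$), and peeling off the $j=n-2$ term, which Lemma~\ref{09001} at time $0$ converts into the time-zero curvature integrand. This matches the paper's argument step for step, with no gap.
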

\begin{proof} 
By using Lemma \ref{09001} and directly computing, we have
\begin{equation}\label{09010}
\begin{split}
&\ \ \ \frac{1}{n(n-1)}(|\Ric_{g}|_{g}^2-\Sc_{g}^2)\ \omega^n=-\rho_g\wedge\rho_g\wedge\omega^{n-2}\\
&=-(\rho_{g_0}+\sqrt{-1}\partial\bar{\partial}f(t))\wedge(\rho_{g_0}+\sqrt{-1}\partial\bar{\partial}f(t))\wedge\omega^{n-2}\\
&=-\rho_{g_0}\wedge\rho_{g_0}\wedge\omega^{n-2}-2\sqrt{-1}\partial\bar{\partial}f(t)\wedge\rho_{g_0}\wedge\omega^{n-2}-\sqrt{-1}\partial\bar{\partial}f(t)\wedge\sqrt{-1}\partial\bar{\partial}f(t)\wedge\omega^{n-2}\\
&=-\rho_{g_0}\wedge\rho_{g_0}\wedge(\omega_0+\Omega_t)^{n-2}-2\sqrt{-1}\partial\bar{\partial}f(t)\wedge\rho_{g_0}\wedge\omega^{n-2}\\
&\ \ \ -\sqrt{-1}\partial\bar{\partial}f(t)\wedge\sqrt{-1}\partial\bar{\partial}f(t)\wedge\omega^{n-2}\\
&=- \rho_{g_0}\wedge\rho_{g_0}\wedge \omega_0^{n-2}-\sum^{n-3}_{j=0} C^j_{n-2}\rho_{g_0}\wedge\rho_{g_0}\wedge\omega^j_0\wedge\Omega_t^{n-2-j}\\
&\ \ \ -2\sqrt{-1}\partial\bar{\partial}f(t)\wedge\rho_{g_0}\wedge\omega^{n-2}-\sqrt{-1}\partial\bar{\partial}f(t)\wedge\sqrt{-1}\partial\bar{\partial}f(t)\wedge\omega^{n-2}\\
&=\frac{1}{n(n-1)}(|\Ric_{g_0}|_{g_0}^2-\Sc_{g_0}^2)\ \omega_0^n-\sum^{n-3}_{j=0} C^j_{n-2} \rho_{g_0}\wedge\rho_{g_0}\wedge\omega^j_0\wedge\Omega_t^{n-2-j}\\
&\ \ \ -2\sqrt{-1}\partial\bar{\partial}f(t)\wedge\rho_{g_0}\wedge\omega^{n-2}-\sqrt{-1}\partial\bar{\partial}f(t)\wedge\sqrt{-1}\partial\bar{\partial}f(t)\wedge\omega^{n-2}.
\end{split}
\end{equation}
Using Lemma \ref{09001} and equality \eqref{09004}, we have
\begin{equation}\label{09011}
\begin{split}
&\ \ \ \frac{1}{n(n-1)}(|\Ric_{g}|_{g}^2-|\Rm_g|_{g}^2)\ \omega^n=(\rho_g\wedge \rho_g-2c_2(\omega))\wedge\omega^{n-2}\\
&=(\rho_{g_0}\wedge \rho_{g_0}-2c_2(\omega_0))\wedge\omega^{n-2}+\sqrt{-1}\partial\bar{\partial}\alpha(t)\wedge\omega^{n-2}\\
&=(\rho_{g_0}\wedge \rho_{g_0}-2c_2(\omega_0))\wedge(\omega_0+\Omega_t)^{n-2}+\sqrt{-1}\partial\bar{\partial}\alpha(t)\wedge\omega^{n-2}\\
&=\sum^{n-3}_{j=0}C^j_{n-2} (\rho_{g_0}\wedge \rho_{g_0}-2c_2(\omega_0))\wedge\omega^j_0\wedge\Omega_t^{n-2-j}\\
&\ \ \ + (\rho_{g_0}\wedge \rho_{g_0}-2c_2(\omega_0))\wedge \omega_0^{n-2}+\sqrt{-1}\partial\bar{\partial}\alpha(t)\wedge\omega^{n-2}\\
&=\frac{1}{n(n-1)}(|\Ric_{g_0}|_{g_0}^2-|\Rm_{g_0}|_{g_0}^2)\ \omega_0^n+\sqrt{-1}\partial\bar{\partial}\alpha(t)\wedge\omega^{n-2}\\
&\ \ \ +\sum^{n-3}_{j=0}C^j_{n-2} (\rho_{g_0}\wedge \rho_{g_0}-2c_2(\omega_0))\wedge\omega^j_0\wedge\Omega_t^{n-2-j}.
\end{split}
\end{equation}
which  completes the proof.
\end{proof}
We have explicit formulae for the terms $f(t)$ and $\varphi(t)$ which appear in the right hand side of the  above formula. 
In order to better understand the term $\partial \bar{\partial} \alpha(t)$,which also appears there,    we derive a local formula :

\begin{lem}\label{09030}
In local coordinates $(z^1,\cdots, z^n)$, we have
\begin{equation}\label{09012}
\begin{split}
\sqrt{-1}\partial\bar{\partial}\alpha(t)&=\frac{1}{8\pi^2}\bar{\partial}\Big((\Gamma^i_{jk}-\tilde{\Gamma}^i_{jk})\bar{\partial}\big(\Gamma^j_{i\gamma}dz^k\wedge dz^\gamma\big)\Big)-\frac{1}{8\pi^2}\bar{\partial}\Big((\tilde{\Gamma}^i_{jk}-\Gamma^i_{jk})\bar{\partial}\big(\tilde{\Gamma}^j_{i\gamma}dz^k\wedge dz^\gamma\big)\Big)\\
&\ \ +\frac{1}{8\pi^2}\partial\Big((\Gamma^{\bar{i}}_{\bar{j}\bar{k}}-\tilde{\Gamma}^{\bar{i}}_{\bar{j}\bar{k}})\partial\big(\Gamma^{\bar{j}}_{\bar{i}\bar{\gamma}}d\bar{z}^k\wedge d\bar{z}^\gamma\big)\Big)-\frac{1}{8\pi^2}\partial\Big((\tilde{\Gamma}^{\bar{i}}_{\bar{j}\bar{k}}-\Gamma^{\bar{i}}_{\bar{j}\bar{k}})\partial\big(\tilde{\Gamma}^{\bar{j}}_{\bar{i}\bar{\gamma}}d\bar{z}^k\wedge d\bar{z}^\gamma\big)\Big),
\end{split}
\end{equation}
where $\Gamma$ and $\tilde{\Gamma}$ are Christoffel symbols with respect to metrics $g$ and $g_0$ respectively.
  Furthermore,
\begin{eqnarray}
\sqrt{-1}\partial\bar{\partial}\alpha(t) \wedge \frac{\omega^{n-2}}{(n-2)!} 
= d\beta(t)   \wedge \frac{\omega^{n-2}}{(n-2)!} 
\end{eqnarray}
where  
\begin{eqnarray}
&&  \beta(t) :=  \frac{1}{8\pi^2} \Big(
(\Gamma^i_{jk}-\tilde{\Gamma}^i_{jk}) (\Sc^{\ j}_{i\  \gamma  \bar r} +\ti{\Sc}^{\ j}_{i\ \gamma \bar r} ) dz^{\bar r} \wedge dz^k\wedge dz^\gamma  \Big) \cr
&&\ \ \ \ \ \ \ \   +\frac{1}{8 \pi^2}\Big( (\Gamma^{\bar{i}}_{\bar{j}\bar{k}}-\tilde{\Gamma}^{\bar{i}}_{\bar{j}\bar{k}})
( \Sc^{\ \bar{j}}_{\bar{i}\ \bar{\gamma} r}   +  \tilde{\Sc}^{\ \bar{j}}_{\bar{i}\ \bar{\gamma} r} )  dz^r \wedge d\bar{z}^k\wedge d\bar{z}^\gamma  \Big),  \label{secclaim} 
\end{eqnarray}
is a well defined (coordinate independent) tensor.
 
\end{lem}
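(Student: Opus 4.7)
My plan is to derive \eqref{09012} by a standard Bott--Chern transgression for the Chern form $c_1^2 - 2c_2$, and to obtain the second claim by commuting $\partial$ and $\bar\partial$ past $\omega^{n-2}$ using $d\omega = 0$. First I would recast the left-hand side of \eqref{09004} in terms of Chern connection matrices. Setting $\theta^i_j := \Gamma^i_{jk}\,dz^k$ and $\tilde\theta^i_j := \tilde\Gamma^i_{jk}\,dz^k$, the associated curvature matrices are $F = \bar\partial\theta$ and $\tilde F = \bar\partial\tilde\theta$ (they relate to the forms $\Omega^i_j$ of Section~\ref{sec:5} by $\Omega^i_j = \tfrac{\sqrt{-1}}{2\pi}F^i_j$), and the Chern--Weil formula together with Lemma~\ref{09001} yields
\[\rho_g\wedge\rho_g - 2c_2(\omega) \,=\, -\tfrac{1}{4\pi^2}\operatorname{tr}(F\wedge F),\]
and likewise for $g_0$. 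Thus \eqref{09012} reduces to producing an explicit local $\sqrt{-1}\partial\bar\partial$-primitive for $-\tfrac{1}{4\pi^2}[\operatorname{tr}(F\wedge F) - \operatorname{tr}(\tilde F\wedge\tilde F)]$.

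Next I would introduce the \emph{tensorial} difference $D^i_j := (\Gamma^i_{jk} - \tilde\Gamma^i_{jk})\,dz^k$. Since $\bar\partial D = F - \tilde F$ and the Bianchi identities give $\bar\partial F = \bar\partial\tilde F = 0$, a short computation yields the transgression
\[\operatorname{tr}(F\wedge F) - \operatorname{tr}(\tilde F\wedge\tilde F) \,=\, \bar\partial\operatorname{tr}\!\bigl(D\wedge(F+\tilde F)\bigr).\]
Expanding the right-hand side in local coordinates produces the two $\bar\partial$-terms of \eqref{09012}; the two $\partial$-terms arise by averaging with the complex conjugate expression (equivalently, by running the same argument on the Chern connection of $\overline{T^{1,0}M}$), which is necessary because neither $\bar\partial$-term is individually real. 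The $dd^c$-Lemma~\ref{09006} then guarantees that the resulting $d$-exact real form is $\sqrt{-1}\partial\bar\partial$-exact, and comparison with \eqref{09004} identifies this combination with $\sqrt{-1}\partial\bar\partial\alpha(t)$.

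For the second claim, I would wedge each of the four terms of \eqref{09012} with $\omega^{n-2}/(n-2)!$ and use $d\omega = 0$ to commute $\bar\partial$ (respectively $\partial$) past the wedge factor, so that each piece becomes $d$ of an explicit $3$-form wedged with $\omega^{n-2}/(n-2)!$. Using $\Sc^{\ j}_{i\ \gamma\bar r} = \partial_{\bar r}\Gamma^j_{i\gamma}$ and its analogue for $\tilde\Gamma$, and then grouping the four contributions via complex conjugation, one obtains precisely $d\beta(t)\wedge\omega^{n-2}/(n-2)!$ with $\beta(t)$ as in \eqref{secclaim}; the combinations $\Sc^{\ j}_{i\ \gamma\bar r} + \tilde\Sc^{\ j}_{i\ \gamma\bar r}$ appearing there reflect the $F+\tilde F$ factor from the transgression. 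Tensoriality of $\beta(t)$ then follows from the tensoriality of $D$ and of the Riemann curvature coefficients.

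The main obstacle I anticipate is the careful bookkeeping of signs and indices in the transgression together with the symmetrization producing a real form: because $\Gamma$ and $\tilde\Gamma$ individually are not tensors, the primitive must be organised so that only tensorial combinations ($\Gamma-\tilde\Gamma$ paired with full curvature coefficients) appear, and the averaging with the conjugate must be arranged so as to produce simultaneously a real form and the $(1,1)$-type potential $\alpha(t)$ demanded by the $dd^c$-Lemma. Verifying the coordinate-invariance claim for the explicit $\beta(t)$ is the secondary check I would perform.
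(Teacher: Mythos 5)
Your proposal is correct and, at bottom, carries out the same computation as the paper, but repackages it in the language of Chern--Weil transgression. Where the paper expands $\Omega^i_j\wedge\Omega^j_i$ and $\tilde\Omega^i_j\wedge\tilde\Omega^j_i$ in Christoffel symbols, subtracts, and invokes the hand-proved index-swap identity \eqref{09018} to cancel the non-tensorial cross terms, you encode the entire cancellation in the one line
\[
\bar\partial\operatorname{tr}\!\bigl(D\wedge(F+\tilde F)\bigr)=\operatorname{tr}(\bar\partial D\wedge(F+\tilde F))=\operatorname{tr}(F\wedge F)-\operatorname{tr}(\tilde F\wedge\tilde F),
\]
where $\operatorname{tr}(F\wedge\tilde F)=\operatorname{tr}(\tilde F\wedge F)$ plays exactly the role of \eqref{09018}. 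Likewise your observation that only the tensorial difference $D=\theta-\tilde\theta$ can appear, and that the potential must be symmetrized with its conjugate to be real, is precisely the averaging the paper does in \eqref{0613003}. The transgression packaging is arguably cleaner: it makes both the tensoriality of $\beta(t)$ and the $F+\tilde F$ structure of \eqref{secclaim} transparent, rather than emerging from coordinate bookkeeping.

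One small imprecision worth fixing: for the second claim you attribute the replacement of $\bar\partial$ (resp.\ $\partial$) by $d$ after wedging with $\omega^{n-2}/(n-2)!$ to $d\omega=0$. That identity lets you move $d$ inside or outside the wedge with $\omega^{n-2}$, but the step you actually need --- that $\bar\partial\gamma\wedge\omega^{n-2}=d\gamma\wedge\omega^{n-2}$ for a $(2,1)$-form $\gamma$ --- follows from pure type-counting ($\partial\gamma$ has bidegree $(3,1)$, so $\partial\gamma\wedge\omega^{n-2}$ has bidegree $(n+1,n-1)$ and vanishes), not from $d\omega=0$. This is exactly the argument the paper makes in \eqref{09100}. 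It is a presentational slip rather than a gap, but state it correctly to avoid the impression that closedness of $\omega$ is the operative input.
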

\begin{proof}
Direct calculations show that
\begin{equation}\label{09013}
\begin{split}
\Omega^i_j\wedge\Omega^j_i&=(\frac{\sqrt{-1}}{2\pi}g^{i\bar{p}}\Sc_{j\bar{p}k\bar{l}}dz^k\wedge d\bar{z}^l)\wedge(\frac{\sqrt{-1}}{2\pi}g^{j\bar{q}}\Sc_{i\bar{q}\gamma\bar{\delta}}dz^\gamma\wedge d\bar{z}^\delta)\\
&=-\frac{1}{4\pi^2}(\Sc^{\ \ i}_{j\ \ k\bar{l}}dz^k\wedge d\bar{z}^l)\wedge(\Sc^{\ \ j}_{i\ \ \gamma\bar{\delta}}dz^\gamma\wedge d\bar{z}^\delta)\\
&=-\frac{1}{4\pi^2}(-\frac{\partial\Gamma^i_{jk}}{\partial\bar{z}^l}dz^k\wedge d\bar{z}^l)\wedge(-\frac{\partial\Gamma^j_{i\gamma}}{\partial\bar{z}^\delta}dz^\gamma\wedge d\bar{z}^\delta)\\
&=\frac{1}{4\pi^2}\bar{\partial}\Big(\Gamma^i_{jk}\bar{\partial}\big(\Gamma^j_{i\gamma}dz^k\wedge dz^\gamma\big)\Big)\\
&=\frac{1}{4\pi^2}\bar{\partial}\Big((\Gamma^i_{jk}-\tilde{\Gamma}^i_{jk})\bar{\partial}\big(\Gamma^j_{i\gamma}dz^k\wedge dz^\gamma\big)\Big)+\frac{1}{4\pi^2}\bar{\partial}\Big(\tilde{\Gamma}^i_{jk}\bar{\partial}\big(\Gamma^j_{i\gamma}dz^k\wedge dz^\gamma\big)\Big).
\end{split}
\end{equation}
The same computation  performed for the metric and K\"ahler form at time zero gives us 
\begin{equation}\label{09014}
\tilde{\Omega}^i_j\wedge\tilde{\Omega}^j_i=\frac{1}{4\pi^2}\bar{\partial}\Big((\tilde{\Gamma}^i_{jk}-\Gamma^i_{jk})\bar{\partial}\big(\tilde{\Gamma}^j_{i\gamma}dz^k\wedge dz^\gamma\big)\Big)+\frac{1}{4\pi^2}\bar{\partial}\Big(\Gamma^i_{jk}\bar{\partial}\big(\tilde{\Gamma}^j_{i\gamma}dz^k\wedge dz^\gamma\big)\Big).
\end{equation}
At the same time, we have 
\begin{equation*}\label{09016}
\begin{split}
\bar{\partial}\Big(\tilde{\Gamma}^i_{jk}\bar{\partial}\big(\Gamma^j_{i\gamma}dz^k\wedge dz^\gamma\big)\Big)&=(\bar{\partial}\tilde{\Gamma}^i_{jk})\wedge\bar{\partial}\big(\Gamma^j_{i\gamma}dz^k\wedge dz^\gamma\big)+\tilde{\Gamma}^i_{jk}\bar{\partial}^2\big(\Gamma^j_{i\gamma}dz^k\wedge dz^\gamma\big)\\
&=(\frac{\partial\tilde{\Gamma}^i_{jk}}{\partial \bar{z}^l}\sqrt{-1}dz^k\wedge d\bar{z}^l)\wedge(\frac{\partial\Gamma^j_{i\gamma}}{\partial \bar{z}^\delta}\sqrt{-1}dz^\gamma\wedge d\bar{z}^\delta)
\end{split}
\end{equation*}
and 
\begin{equation*}\label{09017}
\begin{split}
\bar{\partial}\Big(\Gamma^i_{jk}\bar{\partial}\big(\tilde{\Gamma}^j_{i\gamma}dz^k\wedge dz^\gamma\big)\Big)&=(\bar{\partial}\Gamma^i_{jk})\wedge\bar{\partial}\big(\tilde{\Gamma}^j_{i\gamma}dz^k\wedge dz^\gamma\big)+\Gamma^i_{jk}\bar{\partial}^2\big(\tilde{\Gamma}^j_{i\gamma}dz^k\wedge dz^\gamma\big)\\
&=(\frac{\partial\Gamma^i_{jk}}{\partial \bar{z}^\delta}\sqrt{-1}dz^k\wedge d\bar{z}^\delta)\wedge(\frac{\partial\tilde{\Gamma}^j_{i\gamma}}{\partial \bar{z}^l}\sqrt{-1}dz^\gamma\wedge d\bar{z}^l)\\
&=(\frac{\partial\Gamma^j_{ik}}{\partial \bar{z}^\delta}\sqrt{-1}dz^k\wedge d\bar{z}^\delta)\wedge(\frac{\partial\tilde{\Gamma}^i_{j\gamma}}{\partial \bar{z}^l}\sqrt{-1}dz^\gamma\wedge d\bar{z}^l)\\
&=(\frac{\partial\Gamma^j_{i\gamma}}{\partial \bar{z}^\delta}\sqrt{-1}dz^\gamma\wedge d\bar{z}^\delta)\wedge(\frac{\partial\tilde{\Gamma}^i_{jk}}{\partial \bar{z}^l}\sqrt{-1}dz^k\wedge d\bar{z}^l),
\end{split}
\end{equation*}
where we exchange $i$ and $j$ in the third equality, and $k$ and $\gamma$ in the last equality. These equalities imply that
\begin{equation}\label{09018}
\bar{\partial}\Big(\tilde{\Gamma}^i_{jk}\bar{\partial}\big(\Gamma^j_{i\gamma}dz^k\wedge dz^\gamma\big)\Big)=\bar{\partial}\Big(\Gamma^i_{jk}\bar{\partial}\big(\tilde{\Gamma}^j_{i\gamma}dz^k\wedge dz^\gamma\big)\Big).
\end{equation}
Conjugating, we get 
\begin{equation}\label{0613001}
\begin{split}
\Omega^{\bar{i}}_{\bar{j}}\wedge\Omega^{\bar{j}}_{\bar{i}}&=\frac{1}{4\pi^2}\partial\Big((\Gamma^{\bar{i}}_{\bar{j}\bar{k}}-\tilde{\Gamma}^{\bar{i}}_{\bar{j}\bar{k}})\partial\big(\Gamma^{\bar{j}}_{\bar{i}\bar{\gamma}}d\bar{z}^k\wedge d\bar{z}^\gamma\big)\Big)+\frac{1}{4\pi^2}\partial\Big(\tilde{\Gamma}^{\bar{i}}_{\bar{j}\bar{k}}\partial\big(\Gamma^{\bar{j}}_{\bar{i}\bar{\gamma}}d\bar{z}^k\wedge d\bar{z}^\gamma\big)\Big),\\
\tilde{\Omega}^{\bar{i}}_{\bar{j}}\wedge\tilde{\Omega}^{\bar{j}}_{\bar{i}}&=\frac{1}{4\pi^2}\partial\Big((\tilde{\Gamma}^{\bar{i}}_{\bar{j}\bar{k}}-\Gamma^{\bar{i}}_{\bar{j}\bar{k}})\partial\big(\tilde{\Gamma}^{\bar{j}}_{\bar{i}\bar{\gamma}}d\bar{z}^k\wedge d\bar{z}^\gamma\big)\Big)+\frac{1}{4\pi^2}\partial\Big(\Gamma^{\bar{i}}_{\bar{j}\bar{k}}\partial\big(\tilde{\Gamma}^{\bar{j}}_{\bar{i}\bar{\gamma}}d\bar{z}^k\wedge d\bar{z}^\gamma\big)\Big).
\end{split}
\end{equation}
We also have
\begin{equation}\label{0613002}
\partial\Big(\tilde{\Gamma}^{\bar{i}}_{\bar{j}\bar{k}}\partial\big(\Gamma^{\bar{j}}_{\bar{i}\bar{\gamma}}d\bar{z}^k\wedge d\bar{z}^\gamma\big)\Big)=\partial\Big(\Gamma^{\bar{i}}_{\bar{j}\bar{k}}\partial\big(\tilde{\Gamma}^{\bar{j}}_{\bar{i}\bar{\gamma}}d\bar{z}^k\wedge d\bar{z}^\gamma\big)\Big).
\end{equation}
Since $c_{2}(\omega)$ and $\rho_g=\sum_{i=1}^n\Omega^i_i$ are real forms, and
\begin{equation}
\Omega^i_j\wedge\Omega^j_i=\Omega^i_i \wedge\Omega^j_j-2c_2(\omega),
\end{equation}
all $\Omega^i_j\wedge\Omega^j_i$ $(i,j=1,\cdots, n)$ are real $(2,2)$-forms. By using $\eqref{09013}$, $\eqref{09014},$  and $\eqref{0613001}$, we have
\begin{equation*}\label{0613003}
\begin{split}
\Omega^i_j\wedge\Omega^j_i&=\frac{1}{2}\Omega^i_j\wedge\Omega^j_i+\frac{1}{2}\Omega^{\bar{i}}_{\bar{j}}\wedge\Omega^{\bar{j}}_{\bar{i}}\\
&=\frac{1}{8\pi^2}\bar{\partial}\Big((\Gamma^i_{jk}-\tilde{\Gamma}^i_{jk})\bar{\partial}\big(\Gamma^j_{i\gamma}dz^k\wedge dz^\gamma\big)\Big)+\frac{1}{8\pi^2}\bar{\partial}\Big(\tilde{\Gamma}^i_{jk}\bar{\partial}\big(\Gamma^j_{i\gamma}dz^k\wedge dz^\gamma\big)\Big)\\
&\ \ +\frac{1}{8\pi^2}\partial\Big((\Gamma^{\bar{i}}_{\bar{j}\bar{k}}-\tilde{\Gamma}^{\bar{i}}_{\bar{j}\bar{k}})\partial\big(\Gamma^{\bar{j}}_{\bar{i}\bar{\gamma}}d\bar{z}^k\wedge d\bar{z}^\gamma\big)\Big)+\frac{1}{8\pi^2}\partial\Big(\tilde{\Gamma}^{\bar{i}}_{\bar{j}\bar{k}}\partial\big(\Gamma^{\bar{j}}_{\bar{i}\bar{\gamma}}d\bar{z}^k\wedge d\bar{z}^\gamma\big)\Big),\\
\end{split}
\end{equation*}
and  
\begin{equation*}\label{0613004}
\begin{split}
\tilde{\Omega}^i_j\wedge\tilde{\Omega}^j_i&=\frac{1}{2}\tilde{\Omega}^i_j\wedge\tilde{\Omega}^j_i+\frac{1}{2}\tilde{\Omega}^{\bar{i}}_{\bar{j}}\wedge\tilde{\Omega}^{\bar{j}}_{\bar{i}}\\
&=\frac{1}{8\pi^2}\bar{\partial}\Big((\tilde{\Gamma}^i_{jk}-\Gamma^i_{jk})\bar{\partial}\big(\tilde{\Gamma}^j_{i\gamma}dz^k\wedge dz^\gamma\big)\Big)+\frac{1}{8\pi^2}\bar{\partial}\Big(\Gamma^i_{jk}\bar{\partial}\big(\tilde{\Gamma}^j_{i\gamma}dz^k\wedge dz^\gamma\big)\Big)\\
&\ \ +\frac{1}{8\pi^2}\partial\Big((\tilde{\Gamma}^{\bar{i}}_{\bar{j}\bar{k}}-\Gamma^{\bar{i}}_{\bar{j}\bar{k}})\partial\big(\tilde{\Gamma}^{\bar{j}}_{\bar{i}\bar{\gamma}}d\bar{z}^k\wedge d\bar{z}^\gamma\big)\Big)+\frac{1}{8\pi^2}\partial\Big(\Gamma^{\bar{i}}_{\bar{j}\bar{k}}\partial\big(\tilde{\Gamma}^{\bar{j}}_{\bar{i}\bar{\gamma}}d\bar{z}^k\wedge d\bar{z}^\gamma\big)\Big).
\end{split}
\end{equation*}
Locally, we have
\begin{equation}\label{090188}
\sqrt{-1\partial}\bar{\partial}\alpha(t)=\Omega^i_j\wedge\Omega^j_i-\tilde{\Omega}^i_j\wedge\tilde{\Omega}^j_i.
\end{equation}
Using $(\ref{09018})$ and $(\ref{0613002})$, we deduce
\begin{equation*}\label{09015}
\begin{split}
\sqrt{-1}\partial\bar{\partial}\alpha(t)&=\frac{1}{8\pi^2}\bar{\partial}\Big((\Gamma^i_{jk}-\tilde{\Gamma}^i_{jk})\bar{\partial}\big(\Gamma^j_{i\gamma}dz^k\wedge dz^\gamma\big)\Big)-\frac{1}{8\pi^2}\bar{\partial}\Big((\tilde{\Gamma}^i_{jk}-\Gamma^i_{jk})\bar{\partial}\big(\tilde{\Gamma}^j_{i\gamma}dz^k\wedge dz^\gamma\big)\Big)\\
&\ \ +\frac{1}{8\pi^2}\partial\Big((\Gamma^{\bar{i}}_{\bar{j}\bar{k}}-\tilde{\Gamma}^{\bar{i}}_{\bar{j}\bar{k}})\partial\big(\Gamma^{\bar{j}}_{\bar{i}\bar{\gamma}}d\bar{z}^k\wedge d\bar{z}^\gamma\big)\Big)-\frac{1}{8\pi^2}\partial\Big((\tilde{\Gamma}^{\bar{i}}_{\bar{j}\bar{k}}-\Gamma^{\bar{i}}_{\bar{j}\bar{k}})\partial\big(\tilde{\Gamma}^{\bar{j}}_{\bar{i}\bar{\gamma}}d\bar{z}^k\wedge d\bar{z}^\gamma\big)\Big)
\end{split}
\end{equation*}
 This proves the identity \eqref{09012}, which is  the first claim of the Lemma.
 
Since $\omega^{n-2}(t)$ is a $(n-2,n-2)$-form, $\big(\Gamma^i_{jk}-\tilde{\Gamma}^i_{jk})\bar{\partial}(\Gamma^j_{i\gamma}dz^k\wedge dz^\gamma\big)-\big(\tilde{\Gamma}^i_{jk}-\Gamma^i_{jk})\bar{\partial}(\tilde{\Gamma}^j_{i\gamma}dz^k\wedge dz^\gamma\big)$ is a $(2,1)$-form, and $(\Gamma^{\bar{i}}_{\bar{j}\bar{k}}-\tilde{\Gamma}^{\bar{i}}_{\bar{j}\bar{k}})\partial\big(\Gamma^{\bar{j}}_{\bar{i}\bar{\gamma}}d\bar{z}^k\wedge d\bar{z}^\gamma\big)-(\tilde{\Gamma}^{\bar{i}}_{\bar{j}\bar{k}}-\Gamma^{\bar{i}}_{\bar{j}\bar{k}})\partial\big(\tilde{\Gamma}^{\bar{j}}_{\bar{i}\bar{\gamma}}d\bar{z}^k\wedge d\bar{z}^\gamma\big)$ is a $(1,2)$-form, and we have
\begin{equation}
\begin{split}
\partial\Big(\big(\Gamma^i_{jk}-\tilde{\Gamma}^i_{jk})\bar{\partial}(\Gamma^j_{i\gamma}dz^k\wedge dz^\gamma\big)-\big(\tilde{\Gamma}^i_{jk}-\Gamma^i_{jk})\bar{\partial}(\tilde{\Gamma}^j_{i\gamma}dz^k\wedge dz^\gamma\big)\Big)\wedge\frac{\omega^{n-2}(t)}{(n-2)!}=0,\\
\bar{\partial}\Big((\Gamma^{\bar{i}}_{\bar{j}\bar{k}}-\tilde{\Gamma}^{\bar{i}}_{\bar{j}\bar{k}})\partial\big(\Gamma^{\bar{j}}_{\bar{i}\bar{\gamma}}d\bar{z}^k\wedge d\bar{z}^\gamma\big)-(\tilde{\Gamma}^{\bar{i}}_{\bar{j}\bar{k}}-\Gamma^{\bar{i}}_{\bar{j}\bar{k}})\partial\big(\tilde{\Gamma}^{\bar{j}}_{\bar{i}\bar{\gamma}}d\bar{z}^k\wedge d\bar{z}^\gamma\big)\Big)\wedge\frac{\omega^{n-2}(t)}{(n-2)!}=0.
\end{split}
\end{equation}
Hence 
\begin{equation}\label{09100}
\begin{split}
&\ \ \ \sqrt{-1}\partial\bar{\partial}\alpha(t)\wedge\frac{\omega^{n-2}(t)}{(n-2)!}\\
&=\frac{1}{8\pi^2}\bar{\partial}\Big((\Gamma^i_{jk}-\tilde{\Gamma}^i_{jk})\bar{\partial}\big(\Gamma^j_{i\gamma}dz^k\wedge dz^\gamma\big)-(\tilde{\Gamma}^i_{jk}-\Gamma^i_{jk})\bar{\partial}\big(\tilde{\Gamma}^j_{i\gamma}dz^k\wedge dz^\gamma\big)\Big)\wedge\frac{\omega^{n-2}(t)}{(n-2)!}\\
&\ \ +\frac{1}{8\pi^2}\partial\Big((\Gamma^{\bar{i}}_{\bar{j}\bar{k}}-\tilde{\Gamma}^{\bar{i}}_{\bar{j}\bar{k}})\partial\big(\Gamma^{\bar{j}}_{\bar{i}\bar{\gamma}}d\bar{z}^k\wedge d\bar{z}^\gamma\big)-(\tilde{\Gamma}^{\bar{i}}_{\bar{j}\bar{k}}-\Gamma^{\bar{i}}_{\bar{j}\bar{k}})\partial\big(\tilde{\Gamma}^{\bar{j}}_{\bar{i}\bar{\gamma}}d\bar{z}^k\wedge d\bar{z}^\gamma\big)\Big)\wedge\frac{\omega^{n-2}(t)}{(n-2)!}\\
&=\frac{1}{8\pi^2}d\Big((\Gamma^i_{jk}-\tilde{\Gamma}^i_{jk})\bar{\partial}\big(\Gamma^j_{i\gamma}dz^k\wedge dz^\gamma\big)-(\tilde{\Gamma}^i_{jk}-\Gamma^i_{jk})\bar{\partial}\big(\tilde{\Gamma}^j_{i\gamma}dz^k\wedge dz^\gamma\big)\Big)\wedge\frac{\omega^{n-2}(t)}{(n-2)!}\\
&\ \ +\frac{1}{8\pi^2}d\Big((\Gamma^{\bar{i}}_{\bar{j}\bar{k}}-\tilde{\Gamma}^{\bar{i}}_{\bar{j}\bar{k}})\partial\big(\Gamma^{\bar{j}}_{\bar{i}\bar{\gamma}}d\bar{z}^k\wedge d\bar{z}^\gamma\big)-(\tilde{\Gamma}^{\bar{i}}_{\bar{j}\bar{k}}-\Gamma^{\bar{i}}_{\bar{j}\bar{k}})\partial\big(\tilde{\Gamma}^{\bar{j}}_{\bar{i}\bar{\gamma}}d\bar{z}^k\wedge d\bar{z}^\gamma\big)\Big)\wedge\frac{\omega^{n-2}(t)}{(n-2)!}\\
&:=d\beta(t)\wedge\frac{\omega^{n-2}(t)}{(n-2)!},
\end{split}
\end{equation}
 where 
\begin{eqnarray}
&& \beta(t)=  \frac{1}{8\pi^2} \Big(
(\Gamma^i_{jk}-\tilde{\Gamma}^i_{jk})\bar{\partial}\big(    ( \Gamma^j_{i\gamma} + \tilde{\Gamma}^j_{i\gamma})   dz^k\wedge dz^\gamma\big) \Big) \cr
&& \ \ \ \ \ \  \ \   + \frac{1}{8 \pi^2}\Big((\Gamma^{\bar{i}}_{\bar{j}\bar{k}}-\tilde{\Gamma}^{\bar{i}}_{\bar{j}\bar{k}})
\partial\big(    ( \Gamma^{\bar{j}}_{\bar{i}\bar{\gamma}}   + \tilde{\Gamma}^{\bar{j}}_{\bar{i}\bar{\gamma}})    d\bar{z}^k\wedge d\bar{z}^\gamma\big) \Big),
\end{eqnarray} 
which may  be written  as 
\begin{eqnarray}
&&  \beta(t)= \frac{1}{8\pi^2} \Big(
(\Gamma^i_{jk}-\tilde{\Gamma}^i_{jk}) (\frac{\partial \Gamma^j_{i\gamma}}{\partial z^{\bar r} } + \frac{\partial {\tilde \Gamma}^j_{i\gamma}}{\partial z^{\bar r}})
 dz^{\bar r} \wedge dz^k\wedge dz^\gamma  \Big) \cr
&&\ \ \ \ \ \ \ \   +\frac{1}{8 \pi^2}\Big( (\Gamma^{\bar{i}}_{\bar{j}\bar{k}}-\tilde{\Gamma}^{\bar{i}}_{\bar{j}\bar{k}})
 (\frac{ \partial \Gamma^{\bar{j}}_{\bar{i}\bar{\gamma}}    } {\partial z^r}  +  \frac{ \partial    \tilde{\Gamma}^{\bar{j}}_{\bar{i}\bar{\gamma}}   }{\partial z^r} ).
    dz^r \wedge d\bar{z}^k\wedge d\bar{z}^\gamma  \Big) 
\end{eqnarray}
and hence 
\begin{eqnarray}
&&  \beta(t) = \frac{1}{8\pi^2} \Big(
(\Gamma^i_{jk}-\tilde{\Gamma}^i_{jk}) (\Sc^{\ j}_{i\  \gamma  \bar r} +\ti{\Sc}^{\ j}_{i\ \gamma \bar r} )  dz^{\bar r} \wedge dz^k\wedge dz^\gamma  \Big) \cr
&&\ \ \ \ \ \ \ \   +\frac{1}{8 \pi^2}\Big( (\Gamma^{\bar{i}}_{\bar{j}\bar{k}}-\tilde{\Gamma}^{\bar{i}}_{\bar{j}\bar{k}})
( \Sc^{\ \bar{j}}_{\bar{i}\ \bar{\gamma} r}   +  \tilde{\Sc}^{\ \bar{j}}_{\bar{i}\ \bar{\gamma} r} )dz^r \wedge d\bar{z}^k\wedge d\bar{z}^\gamma  \Big) ,
\end{eqnarray}
and hence the second claim, \eqref{secclaim}, is proved.  
This finishes the proof. 
\end{proof}

Using Lemma \ref{09029} and Lemma \ref{09030}, we are now able to derive  local formulae for the $L^2$-norm of curvature. 
\begin{thm} \label{KaehlerIntEst}
Let $(M,g(t))_{t\in[0,T)}$ be a smooth solution to the K\"ahler-Ricci flow $(\ref{Kahler})$ on a closed K\"ahler manifold $(M,\omega_0)$ with complex dimension $n$ and $N\subseteq M$ be a compact,connected,  complex $n$-dimensional manifold with smooth boundary $\partial N$ (possibly empty). Then for any $0\leqslant t<T$, we have

\begin{eqnarray}\label{09031001}
  \int_N |\Rm_{g(t)}|^2_{g(t)}dV_{g(t)}&&=\int_N \Sc^2_{g(t)}dV_{g(t)}+ \int_N(|\Rm_{g_0}|^2_{g_0}-\Sc^2_{g_0})dV_{g_0}+C(t)\cr
&&{\ \ \  -\frac{1}{(n-2)!}\int_N\sum^{n-3}_{j=0}C^{j}_{n-2} \rho^2_{g_0}\wedge\omega_0^j\wedge(-t\rho_0)^{n-2-j}}\cr
&&{\ \ \ -\frac{1}{(n-2)!}\int_N\sum^{n-3}_{j=0}C^{j}_{n-2} (\rho_{g_0}^2-2c_2(\omega_0))\wedge\omega_0^j\wedge(-t\rho_0))^{n-2-j}} , 
\end{eqnarray}
where 
\begin{equation}\label{090310201}
\begin{split}
C(t)&=-\int_{\partial N}\beta(t)\wedge\frac{\omega^{n-2}(t)}{(n-2)!}-\int_{\partial N}\frac{1}{2}d^cf(t)\wedge\sqrt{-1}\partial\bar{\partial}f(t)\wedge\frac{\omega^{n-2}(t)}{(n-2)!}-\int_{\partial N}d^c f(t)\wedge \rho_{g_0}\wedge\frac{\omega^{n-2}(t)}{(n-2)!}\\
&\ \ \ -\frac{1}{(n-2)!}\int_{\partial N}\sum^{n-3}_{j=0}\sum^{n-2-j-1}_{i=0}C^{j}_{n-2}C^{i}_{n-2-j} (\frac{1}{2}d^c\varphi(t))\wedge(\rho_{g_0}^2-2c_2(\omega_0))\wedge\omega_0^j\wedge(\sqrt{-1}\partial\bar{\partial}\varphi(t))^{n-3-j-i}\\
&\ \ \ -\frac{1}{(n-2)!}\int_{\partial N}\sum^{n-3}_{j=0}\sum^{n-2-j-1}_{i=0}C^{j}_{n-2}C^{i}_{n-2-j} (\frac{1}{2}d^c\varphi(t))\wedge\rho_{g_0}^2\wedge\omega_0^j\wedge(\sqrt{-1}\partial\bar{\partial}\varphi(t))^{n-3-j-i},
\end{split}
\end{equation}
and $\beta(t)$ comes from $(\ref{09100})$. 
\end{thm}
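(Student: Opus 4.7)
The plan is to start from the two pointwise identities in Lemma \ref{09029}, subtract them to eliminate $|\Ric_g|^2$, integrate over $N$, and then use Stokes' theorem systematically to convert every $\sqrt{-1}\partial\bar\partial$-exact term into a boundary contribution, collecting the result into $C(t)$. The only bulk contributions left will be the ones written explicitly in the statement, together with $\int_N \Sc_g^2\,dV_g$ and the initial-time integral $\int_N(|\Rm_{g_0}|^2-\Sc_{g_0}^2)\,dV_{g_0}$.

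First, subtract the first identity of Lemma \ref{09029} from the second (or equivalently negate the difference): the $|\Ric_g|^2$ terms cancel, producing
\begin{equation*}
\tfrac{1}{n(n-1)}(|\Rm_g|^2-\Sc_g^2)\,\omega^n = \tfrac{1}{n(n-1)}(|\Rm_{g_0}|^2-\Sc_{g_0}^2)\,\omega_0^n + E_1 + E_2 + E_3 + E_4 + E_5,
\end{equation*}
where $E_1 = -\sqrt{-1}\partial\bar\partial\alpha(t)\wedge\omega^{n-2}$, $E_2 = -\sqrt{-1}\partial\bar\partial f(t)\wedge\sqrt{-1}\partial\bar\partial f(t)\wedge\omega^{n-2}$, $E_3 = -2\sqrt{-1}\partial\bar\partial f(t)\wedge\rho_{g_0}\wedge\omega^{n-2}$, and $E_4$, $E_5$ are the two sums $-\sum_{j=0}^{n-3}C^j_{n-2}\rho_{g_0}^2\wedge\omega_0^j\wedge\Omega_t^{n-2-j}$ and $-\sum_{j=0}^{n-3}C^j_{n-2}(\rho_{g_0}^2-2c_2(\omega_0))\wedge\omega_0^j\wedge\Omega_t^{n-2-j}$ respectively. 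Multiplying by $n(n-1)$ and using $\omega^n/n! = dV_g$ turns the left and the first right term into $\int_N|\Rm_g|^2 dV_g - \int_N\Sc_g^2 dV_g$ and $\int_N(|\Rm_{g_0}|^2-\Sc_{g_0}^2)dV_{g_0}$ after integration over $N$. The factor $n(n-1)/n! = 1/(n-2)!$ is the source of the prefactor that appears in the statement.

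Next, handle each $E_k$ by Stokes. For $E_1$, Lemma \ref{09030} gives $\sqrt{-1}\partial\bar\partial\alpha(t)\wedge\omega^{n-2}/(n-2)! = d\beta(t)\wedge\omega^{n-2}/(n-2)!$; since $d\omega=0$ we can move $d$ outside and obtain the boundary term $-\int_{\partial N}\beta(t)\wedge\omega^{n-2}/(n-2)!$. For $E_2$ and $E_3$ use $\sqrt{-1}\partial\bar\partial = \tfrac{1}{2}dd^c$: writing $\sqrt{-1}\partial\bar\partial f = d(\tfrac{1}{2}d^c f)$ and using that $\omega$, $\rho_{g_0}$, $\sqrt{-1}\partial\bar\partial f$ are $d$-closed, the forms $E_2$ and $E_3$ wedged with $\omega^{n-2}/(n-2)!$ become $d$ of $\tfrac{1}{2}d^c f \wedge \sqrt{-1}\partial\bar\partial f \wedge \omega^{n-2}/(n-2)!$ and $d^c f \wedge \rho_{g_0}\wedge\omega^{n-2}/(n-2)!$ respectively, producing precisely the second and third boundary terms of $C(t)$.

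Finally, for $E_4$ and $E_5$ expand the power of $\Omega_t = -t\rho_0 + \sqrt{-1}\partial\bar\partial\varphi(t)$ by the binomial theorem:
\begin{equation*}
\Omega_t^{n-2-j} = (-t\rho_0)^{n-2-j} + \sum_{i=0}^{n-2-j-1} C^{i}_{n-2-j}(-t\rho_0)^i\wedge(\sqrt{-1}\partial\bar\partial\varphi(t))^{n-2-j-i}.
\end{equation*}
The pure $(-t\rho_0)^{n-2-j}$ piece yields exactly the two bulk integrals appearing on the right-hand side of the theorem. Every remaining term contains at least one factor of $\sqrt{-1}\partial\bar\partial\varphi(t) = d(\tfrac{1}{2}d^c\varphi(t))$; since $\rho_{g_0}$, $c_2(\omega_0)$, $\omega_0$, $\rho_0$ and $\sqrt{-1}\partial\bar\partial\varphi(t)$ are all closed, each such term is globally exact and Stokes produces the corresponding double-sum boundary integrals in $C(t)$. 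The only bookkeeping point, which is the main technical step, is to verify that factoring out one $\sqrt{-1}\partial\bar\partial\varphi(t) = d(\tfrac{1}{2}d^c\varphi(t))$ and commuting past the remaining even-degree closed factors produces the signs and binomial coefficients $C^{j}_{n-2}C^{i}_{n-2-j}$ exactly as in \eqref{090310201}. Summing all contributions gives the stated formula.
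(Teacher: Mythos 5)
Your proposal is correct and follows essentially the same route as the paper: subtract the two identities of Lemma \ref{09029} to isolate $|\Rm_g|^2-\Sc_g^2$, expand $\Omega_t^{n-2-j}=(-t\rho_0+\sqrt{-1}\partial\bar\partial\varphi)^{n-2-j}$ binomially, pull out the pure $(-t\rho_0)^{n-2-j}$ bulk terms, and convert every remaining $\sqrt{-1}\partial\bar\partial$-exact piece (using $\sqrt{-1}\partial\bar\partial=d(\tfrac12 d^c\cdot)$, the identity $d\beta$ from Lemma \ref{09030}, and closedness of $\omega,\rho_{g_0},\rho_0,c_2(\omega_0),\sqrt{-1}\partial\bar\partial\varphi$) into boundary integrals via Stokes. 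The bookkeeping of the coefficients $C^j_{n-2}C^i_{n-2-j}$ that you defer is exactly what the paper records in passing from \eqref{090080} to the stated form of $C(t)$.
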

\begin{proof}
First, from Lemma \ref{09029}, we have
\begin{equation}\label{090080}
\begin{split}
&\ \ \ (|\Rm_{g(t)}|_{g(t)}^2-\Sc_{g(t)}^2)\ dV_{g(t)}\\
&= (|\Rm_{g_0}|_{g_0}^2-\Sc^2_{g_0})\ dV_{g_0}-\sqrt{-1}\partial\bar{\partial}f(t)\wedge\sqrt{-1}\partial\bar{\partial}f(t)\wedge\frac{\omega^{n-2}(t)}{(n-2)!}\cr
&\ \ \ -2\sqrt{-1}\partial\bar{\partial}f(t)\wedge \rho_{g_0}\wedge\frac{\omega^{n-2}(t)}{(n-2)!}-\frac{1}{(n-2)!}\sum^{n-3}_{j=0}C^{j}_{n-2} \rho^2_{g_0}\wedge\omega_0^j\wedge\Omega_t^{n-2-j}\cr
&\ \ \ -\sqrt{-1}\partial\bar{\partial}\alpha(t)\wedge\frac{\omega^{n-2}(t)}{(n-2)!}-\frac{1}{(n-2)!}\sum^{n-3}_{j=0}C^{j}_{n-2} (\rho_{g_0}^2-2c_2(\omega_0))\wedge\omega_0^j\wedge\Omega_t^{n-2-j}\cr
&=(|\Rm_{g_0}|_{g_0}^2-\Sc^2_{g_0})\ dV_{g_0}-\sqrt{-1}\partial\bar{\partial}f(t)\wedge\sqrt{-1}\partial\bar{\partial}f(t)\wedge\frac{\omega^{n-2}(t)}{(n-2)!}\cr
&\ \ \ -2\sqrt{-1}\partial\bar{\partial}f(t)\wedge \rho_{g_0}\wedge\frac{\omega^{n-2}(t)}{(n-2)!}-\frac{1}{(n-2)!}\sum^{n-3}_{j=0}C^{j}_{n-2} \rho^2_{g_0}\wedge\omega_0^j\wedge(-t\rho_0+\sqrt{-1}\partial\bar{\partial} \varphi(t))^{n-2-j}\cr
&\ \ \ -\sqrt{-1}\partial\bar{\partial}\alpha(t)\wedge\frac{\omega^{n-2}(t)}{(n-2)!}-\frac{1}{(n-2)!}\sum^{n-3}_{j=0}C^{j}_{n-2} (\rho_{g_0}^2-2c_2(\omega_0))\wedge\omega_0^j\wedge(-t\rho_0+\sqrt{-1}\partial\bar{\partial} \varphi(t))^{n-2-j}\cr
&=(|\Rm_{g_0}|_{g_0}^2-\Sc^2_{g_0})\ dV_{g_0}-\sqrt{-1}\partial\bar{\partial}f(t)\wedge\sqrt{-1}\partial\bar{\partial}f(t)\wedge\frac{\omega^{n-2}(t)}{(n-2)!}\cr
&\ \ \ -2\sqrt{-1}\partial\bar{\partial}f(t)\wedge \rho_{g_0}\wedge\frac{\omega^{n-2}(t)}{(n-2)!}-\frac{1}{(n-2)!}\sum^{n-3}_{j=0}C^{j}_{n-2} \rho^2_{g_0}\wedge\omega_0^j\wedge(-t \rho_0)^{n-2-j}\cr
&\ \ \ -\sqrt{-1}\partial\bar{\partial}\alpha(t)\wedge\frac{\omega^{n-2}(t)}{(n-2)!}-\frac{1}{(n-2)!}\sum^{n-3}_{j=0}C^{j}_{n-2} (\rho_{g_0}^2-2c_2(\omega_0))\wedge\omega_0^j\wedge(-t\rho_0)^{n-2-j}\cr
&\ \ \ -\frac{1}{(n-2)!}\sum^{n-3}_{j=0}\sum^{n-2-j-1}_{i=0}C^{j}_{n-2}C^{i}_{n-2-j} \rho^2_{g_0}\wedge\omega_0^j\wedge(-t\rho_0)^{i}\wedge(\sqrt{-1}\partial\bar{\partial} \varphi(t))^{n-2-j-i}\\
&\ \ \ -\frac{1}{(n-2)!}\sum^{n-3}_{j=0}\sum^{n-2-j-1}_{i=0}C^{j}_{n-2}C^{i}_{n-2-j} (\rho_{g_0}^2-2c_2(\omega_0))\wedge\omega_0^j\wedge(-t \rho_0)^i\wedge\sqrt{-1}\partial\bar{\partial} \varphi(t))^{n-2-j-i}
\end{split}
\end{equation}
where $\alpha(t)$ comes from $(\ref{09004})$ (see Lemma \ref{09030} for details), and $f(t)$ and ${\Omega_t=-t{ \rho_0}+\sqrt{-1}\partial\bar{\partial} \varphi(t)}$ come from \eqref{rickeqns}

Integrating the above equality on $N$ and using integration by parts, we deduce
\begin{equation}\label{090310}
\begin{split}
\int_N |\Rm_{g(t)}|^2_{g(t)}dV_{g(t)}&=\int_N \Sc^2_{g(t)}dV_{g(t)}+ \int_N(|\Rm_{g_0}|^2_{g_0}-\Sc^2_{g_0})dV_{g_0}+C(t)\\
&\ \ \ -\frac{1}{(n-2)!}\int_N\sum^{n-3}_{j=0}C^{j}_{n-2} \rho^2_{g_0}\wedge\omega_0^j\wedge(-t{ \rho_0})^{n-2-j}\\
&\ \ \ -\frac{1}{(n-2)!}\int_N\sum^{n-3}_{j=0}C^{j}_{n-2} (\rho_{g_0}^2-2c_2(\omega_0))\wedge\omega_0^j\wedge(-t{ \rho_0}))^{n-2-j},
\end{split}
\end{equation}
where 
\begin{equation}\label{0903102}
\begin{split}
C(t)&=-\int_{\partial N}\beta(t)\wedge\frac{\omega^{n-2}(t)}{(n-2)!}-\int_{\partial N}\frac{1}{2}d^cf(t)\wedge\sqrt{-1}\partial\bar{\partial}f(t)\wedge\frac{\omega^{n-2}(t)}{(n-2)!}-\int_{\partial N}d^c f(t)\wedge \rho_{g_0}\wedge\frac{\omega^{n-2}(t)}{(n-2)!}\cr
&\ \ \ -\frac{1}{(n-2)!}\int_{\partial N}\sum^{n-3}_{j=0}\sum^{n-2-j-1}_{i=0}C^{j}_{n-2}C^{i}_{n-2-j} (\frac{1}{2}d^c\varphi(t))\wedge(\rho_{g_0}^2-2c_2(\omega_0))\wedge\omega_0^j\wedge(\sqrt{-1}\partial\bar{\partial}\varphi(t))^{n-3-j-i}\cr
&\ \ \ -\frac{1}{(n-2)!}\int_{\partial N}\sum^{n-3}_{j=0}\sum^{n-2-j-1}_{i=0}C^{j}_{n-2}C^{i}_{n-2-j} (\frac{1}{2}d^c\varphi(t))\wedge\rho_{g_0}^2\wedge\omega_0^j\wedge(\sqrt{-1}\partial\bar{\partial}\varphi(t))^{n-3-j-i},
\end{split}
\end{equation}
and $\beta(t)$ comes from $(\ref{09100})$ 
as claimed.
\end{proof}
  In the following, we see that the constant $C(t)$ may be uniformly estimated from above  if we are in the setting ({\bf B}). 
\begin{thm}\label{KaehlerIntEstCor}
Let $(M,g(t))_{t\in[0,T)}$ with $T<\infty$ be a smooth solution to the K\"ahler-Ricci flow $(\ref{Kahler})$ on a closed K\"ahler manifold $(M,\omega_0)$ with complex dimension $n$ and  $N  \subseteq M$ be   a  smooth open connected  real $2n$-dimensional   submanifold, where    $N  \subseteq M,$ $\Omega$   and $(M^{n},g(t))_{t\in [0,T)}$    are  as in ({\bf B}). 
 Then $C(t)$ from the Theorem above, Theorem \ref{KaehlerIntEst}, satisfies
   $\sup\limits_{t\in [0,T)} |C(t)|  
< \infty.$
\end{thm}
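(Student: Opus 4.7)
The strategy is to bound each of the five boundary integrals appearing in \eqref{090310201} for $C(t)$ uniformly in $t \in [0,T)$, exploiting the fact that $\partial N \subset \Omega$, where ({\bf B}) furnishes uniform control of the flow. The core step is to upgrade the curvature bounds of ({\bf B}) to uniform control of $g(t)$ and of its coordinate derivatives on $\overline\Omega$ for all $t \in [0,T)$.

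First, I would argue that $g(t)$ is uniformly equivalent to $g_0$ on $\overline\Omega$. Combining $|\Rm_{g(t)}|_{g(t)}\leq 1$ for $t\in[0,1]$ with the bound $|\nabla\Rm_{g(t)}|_{g(t)}\leq 1$ on $\overline\Omega\times[0,T)$ and the finiteness of $T$, one obtains a uniform bound on $|\Ric_{g(t)}|_{g(t)}$ on $\overline\Omega\times[0,T)$. Integrating $\partial_t g=-2\Ric$ yields $c^{-1}g_0\leq g(t)\leq c g_0$ on $\overline\Omega$ for some $c=c(N,\Omega,g|_\Omega,T)<\infty$. Fix a finite covering of $\partial N$ by coordinate charts; in each such chart the $C^0$ equivalence, together with the covariant bounds on $\Rm_{g(t)}$ and its higher derivatives from ({\bf B}), can be bootstrapped via $\partial_t g=-2\Ric$ and the standard relations between $\nabla^k\Rm$ and coordinate derivatives of $g$, producing uniform $C^k$ bounds on $g(t)$ in the chart for any required $k$.

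With this smooth control in hand, each integrand in \eqref{090310201} can be estimated. The potential $f(t)=\log(\omega_0^n/\omega^n)$ is uniformly bounded by the volume comparison, and the identity $\sqrt{-1}\partial\bar\partial f(t)=\rho_{g(t)}-\rho_{g_0}$ combined with the Ricci bound gives a uniform bound on $\sqrt{-1}\partial\bar\partial f(t)$, while $d^c f(t)$ is bounded via the $C^1$ control of $g(t)$. The Kähler potential $\varphi(t)=-\int_0^t f(s)\,ds$, being a time integral over an interval of length at most $T$, inherits uniform bounds on itself and on its first and second derivatives from those on $f(s)$. The form $\beta(t)$ of \eqref{secclaim} is bounded because $\Gamma_{g(t)}-\Gamma_{g_0}$ is a tensor whose components are uniformly controlled by the $C^1$ bounds on $g(t)$ and $g_0$, while the curvature components appearing in \eqref{secclaim} are bounded by ({\bf B}). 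The background forms $\rho_{g_0}$, $c_2(\omega_0)$, $\omega_0$ are fixed and smooth, and $\omega^{n-2}(t)$ is uniformly controlled in each chart by the bound on $g(t)$.

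Summing the resulting pointwise bounds over the finite cover of $\partial N$, and using that the induced $g(t)$-volume of $\partial N$ is uniformly bounded via the equivalence with $g_0$, each of the five boundary integrals is uniformly bounded by a constant depending only on $n,N,g_0,\Omega,g|_\Omega,T$, giving the claim. The main obstacle is the bootstrap step converting the covariant curvature control of ({\bf B}) into coordinate $C^k$ control of $g(t)$ in fixed charts meeting $\partial N$; this is standard but slightly technical, proceeding by induction on derivative order using the Ricci flow equation, and requires some care to ensure that each tensorial quantity entering \eqref{090310201} is ultimately bounded in a coordinate-free manner.
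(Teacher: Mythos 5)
Your proposal is correct and follows essentially the same route as the paper: cover $\partial N$ by finitely many charts inside $\Omega$, use the uniform covariant curvature bounds of ({\bf B}) to get metric equivalence with $g_0$ and (via Hamilton's argument, Theorem~8.1 of \cite{HamSing}) uniform coordinate $C^k$ bounds on $g(t)$ in those charts, and then bound each term $\beta(t)$, $d^cf$, $\partial\bar\partial f$, $d^c\varphi$, $\partial\bar\partial\varphi$, and the fixed background forms pointwise on $\partial N$. The only cosmetic difference is that you re-derive the metric equivalence (and the $\Rm$-bound on $\overline\Omega\times[0,T)$) from the $\nabla\Rm$ bound and the $[0,1]$ curvature bound — the paper simply reads ({\bf B}) as already furnishing uniform bounds on $\Rm$ and all its covariant derivatives on $\overline\Omega\times[0,T)$; your short derivation of the $\Rm$-bound from $|\nabla\Rm|\le 1$ alone is not quite a one-liner (the reaction term in $\partial_t\Rm=\Delta\Rm+\Rm*\Rm$ is quadratic), but this is a side point that doesn't affect the argument.
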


\begin{proof}
Take a covering of $N$ by a collection of balls $Z:= \cup_{i=1}^N B_{r_i}(p_i)$ at time zero, such that 
$ \overline{B_{5r_i}(p_i) }\subseteq \Omega$  for all $i=1, \ldots,N$ and so that each ball $B_{5r_i}(p_i)$ admits holomorphic  geodesic coordinates.
We define $\ti Z:= \cup_{i=1}^N B_{4r_i}(p_i),$ 
  $\hat Z:=  \cup_{i=1}^N   \overline{B_{5r_i}(p_i)}.$
The curvatures and all covariant derivatives thereof
are uniformly bounded in time on $\hat Z:= 
\cup_{i=1}^N   \overline{B_{5r_i}(p_i)}$  by assumption (the constants depending on the order of the covariant derivative).
This means that the metric is bounded uniformly in time from above and below by the time zero metric on  $\ti Z$.
Furthermore,  the euclidean norm of any spatial derivative of any order of the metric  in one of the coordinate charts on $B_{4r_i}(p_i)$   is   also bounded 
uniformly     from above  on $[0,T)$ as one sees in the proof of Theorem 8.1 in \cite{HamSing}. 
Hence  the norm with respect to $g(t)$ or $g_0$ of all the terms $\beta(t)$, $d^c f(t),$ $\partial \overline{\partial}f(t)$ $d^c \varphi$, $\partial \bar{\partial} \varphi(t),$ and so on, appearing in $C(t)$, are all bounded uniformly by constants independent of $t\in [0,T)$  on  this   neighborhood and hence $\sup\limits_{t\in [0,T)} |C(t)|< \infty$ as claimed. 

\end{proof}

\section{Appendix A}
{\it Proof of Lemma \ref{400} and Lemma \ref{09001}.} Let $(z_1,\cdots,z_n)$ be a local complex coordinates such that
\begin{equation*}\label{092201}
\begin{split}
&\ \omega=\sum_{i=1}^{n} \frac{\sqrt{-1}}{2\pi}dz^i\wedge d\bar{z}^i,\ \ \ \Omega^j_i=\frac{\sqrt{-1}}{2\pi}g^{j\bar{p}}\Sc_{i\bar{p}k\bar{l}}\ dz^k\wedge d\bar{z}^l=\frac{\sqrt{-1}}{2\pi}\Sc_{i\bar{j}k\bar{l}}\ dz^k\wedge d\bar{z}^l,\\
&\ c_2(\omega)=\frac{1}{2}\sum^{n}_{i,j=1}(\Omega^i_i \wedge\Omega^j_j-\Omega^i_j\wedge\Omega^j_i).
\end{split}
\end{equation*}
Then we have
\begin{equation}\label{092202}
\begin{split}
\rho_g\wedge\rho_g\wedge\omega^{n-2}&=\Omega^i_i\wedge\Omega^j_j\wedge\omega^{n-2}\\
&=(\frac{\sqrt{-1}}{2\pi})^2\Sc_{i\bar{i}k\bar{p}}\Sc_{j\bar{j}m\bar{l}}\ dz^k\wedge d\bar{z}^p\wedge dz^m\wedge d\bar{z}^l\wedge\omega^{n-2}\\
&=(\frac{\sqrt{-1}}{2\pi})^2(\Sc_{i\bar{i}k\bar{k}}\Sc_{j\bar{j}l\bar{l}}-\Sc_{i\bar{i}k\bar{l}}\Sc_{j\bar{j}l\bar{k}})dz^k\wedge d\bar{z}^k\wedge dz^l\wedge d\bar{z}^l\wedge\omega^{n-2}\\
&=\frac{1}{n(n-1)}(\Sc_{i\bar{i}k\bar{k}}\Sc_{j\bar{j}l\bar{l}}-\Sc_{i\bar{i}k\bar{l}}\Sc_{j\bar{j}l\bar{k}})\ \omega^{n}\\
&=\frac{1}{n(n-1)}(\Sc_{g}^2-|\Ric_{g}|_{g}^2)\ \omega^{n}
\end{split}
\end{equation}
and
\begin{equation}\label{0922020}
\begin{split}
(\rho_g\wedge \rho_g-2c_2(\omega))\wedge\omega^{n-2}&=\Omega^i_j\wedge\Omega^j_i\wedge\omega^{n-2}\\
&=(\frac{\sqrt{-1}}{2\pi})^2(\Sc_{i\bar{j}k\bar{k}}\Sc_{j\bar{i}l\bar{l}}-\Sc_{i\bar{j}k\bar{l}}\Sc_{j\bar{i}l\bar{k}})dz^k\wedge d\bar{z}^k\wedge dz^l\wedge d\bar{z}^l\wedge\omega^{n-2}\\
&=\frac{1}{n(n-1)}(\Sc_{i\bar{j}k\bar{k}}\Sc_{j\bar{i}l\bar{l}}-\Sc_{i\bar{j}k\bar{l}}\Sc_{j\bar{i}l\bar{k}})\ \omega^{n}\\
&=\frac{1}{n(n-1)}(|\Ric_{g}|_{g}^2-|\Rm_{g}|_{g}^2)\ \omega^{n}.
\end{split}
\end{equation}
Hence we get Lemma \ref{09001}. If $M$ is a closed K\"ahler manifolds with complex dimension $n$,
 then Lemma \ref{400}  now follows from Stoke's formula and the fact that
  $c_1(M)=[\rho_g]$ and $c_2(M)=[c_2(\omega)].$ \QEDB

{\it Conflict of interest statement}: There is no conflict of interest. \\
{\it Data availability statement }: No datasets were generated or analysed during the current study. \\

\end{document}